\documentclass[11pt]{article}
\usepackage{geometry}                
\geometry{letterpaper}                   
\usepackage{graphicx}
\usepackage{amssymb}
\usepackage{epstopdf}
\DeclareGraphicsRule{.tif}{png}{.png}{`convert #1 `dirname #1`/`basename #1 .tif`.png}
\usepackage{amsmath,amsthm,amscd,amssymb}
\usepackage{latexsym}
\numberwithin{equation}{section}

\theoremstyle{plain}
\newtheorem{theorem}{Theorem}[section]
\newtheorem{lemma}[theorem]{Lemma}
\newtheorem{corollary}[theorem]{Corollary}
\newtheorem{proposition}[theorem]{Proposition}

\theoremstyle{definition}
\newtheorem{definition}[theorem]{Definition}

\newtheorem{example}[theorem]{Example}

\theoremstyle{remark}

\newtheorem{case[theorem]}{Case}

\def\bb #1{ {\mathbb #1} }

\title{Tiling sets and spectral sets over finite fields }
\author{C. Aten, B. Ayachi, E. Bau, D. FitzPatrick, A. Iosevich, H. Liu, A. Lott, \\ I. MacKinnon, S. Maimon, S. Nan, J. Pakianathan, G. Petridis, \\ C. Rojas Mena, A. Sheikh, T. Tribone, J. Weill, C. Yu}

\begin{document}
\maketitle

\begin{abstract} We study tiling and spectral sets in vector spaces over prime fields. The classical Fuglede conjecture in locally compact abelian groups says that a set is spectral if and only if it tiles by translation. This conjecture was disproved by T. Tao in Euclidean spaces of dimensions 5 and higher, using constructions over prime fields (in vector spaces over finite fields of prime order) and lifting them to the Euclidean setting. Over prime fields, when the dimension of the vector space is less than or equal to $2$ it has recently been proven that the Fuglede conjecture holds (see \cite{IMP15}). In this paper we study this question in higher dimensions over prime fields and provide some results and counterexamples. In particular we prove the existence of spectral sets which do not tile in $\mathbb{Z}_p^5$ for all odd primes $p$ and $\mathbb{Z}_p^4$ for all odd primes $p$ such that $p \equiv 3 \text{ mod } 4$. Although counterexamples in low dimensional groups over cyclic rings $\mathbb{Z}_n$ were previously known they were usually for non prime $n$ or a small, sporadic set of primes $p$ rather than general constructions. This paper is a result of a Research Experience for Undergraduates program ran at the University of Rochester during the summer of 2015 by A. Iosevich, J. Pakianathan and G. Petridis.  \\

\noindent
{\it Keywords:} Tiling, spectral set, Hadamard matrix, Fuglede conjecture. \\
2010 {\it Mathematics Subject Classification.}
Primary: ;
Secondary: .
\end{abstract} 

\tableofcontents

\section{Introduction}

\vskip.125in 

The purpose of this paper is to study the relationships between tiling properties of sets and the existence of orthogonal exponential bases for functions on these sets in the context of vector spaces over finite fields. As both tiling properties and spectral properties of sets depend only on the underlying abelian group of these vector spaces, 
it is enough to understand these relationships over prime fields $\mathbb{Z}_p = \mathbb{F}_p$. This is because for any prime $p$, the finite field 
$\mathbb{F}_{p^s}$ is additively isomorphic to $\mathbb{Z}_p^s$ so that $\mathbb{F}_{p^s}^d \cong \mathbb{Z}_p^{ds}$ as abelian groups.
Due to this, in the remainder of this paper, we exclusively consider these questions in vector spaces over prime fields i.e., in $\mathbb{Z}_p^d$, where $p$ is a prime.

The study of the relationship between exponential bases and tiling has its roots in the celebrated Fuglede Conjecture in ${\Bbb R}^d$, which says that if $E \subset {\Bbb R}^d$ of positive Lebesgue measure, then $L^2(E)$ possesses an orthogonal basis of exponentials if and only if $E$ tiles ${\Bbb R}^d$ by translation. Fuglede proved this conjecture in the celebrated 1974 paper (\cite{Fu74}) in the case when either the tiling set or the spectrum is a lattice. A variety of results were proved establishing connections between tiling and orthogonal exponential bases. See, for example, \cite{LRW00}, \cite{IP98}, \cite{L02}, \cite{KL03} and \cite{KL04}. In 2001, Izabella Laba proved the Fuglede conjecture for unions of two intervals in the plane (\cite{L01}). In 2003, Iosevich, Katz and Tao  (\cite{IKT03}) proved that the Fuglede conjecture holds for convex planar domains. 

A cataclysmic event in the history of this problem took place in 2004 when Terry Tao (\cite{T03}) disproved the Fuglede Conjecture by exhibiting a spectral set in ${\Bbb R}^{5}$ which does not tile. The first step in his argument is the construction of a spectral subset of ${\Bbb Z}_3^5$ of size $6$. It is easy to see that this set does not tile $\mathbb{Z}_3^5$ because $6$ does not divide $3^5$. As a by-product, this shows that spectral sets in ${\Bbb Z}_p^d$ do not necessarily tile at least in the cases $p=3, d \geq 5$ and $p=2, d \geq 11$ considered by Tao. See \cite{KM06}, where Kolountzakis and Matolcsi also disprove the reverse implication of the Fuglede Conjecture. In \cite{FR06} and \cite{FMM06}, the dimension of counter-examples was further reduced. In fact, Farkas, Matolcsi and Mora show in \cite{FMM06} that the Fuglede conjecture fails in $3$ dimensions by proving the existence of a tiling set which is not spectral in ${\Bbb Z}_{n}^3$ (where $n$ is a suitably large multiple of $24$) by first constructing a tiling set without a universal spectrum in ${\Bbb Z}_{24}^3$. The general feeling in the field was that sooner or later the counter-examples of both implication will cover all dimensions. However, Iosevich, Mayeli and Pakianathan proved in \cite{IMP15} that the Fuglede Conjecture holds in two-dimensional vector spaces over prime fields. This result is reproved in this paper in the course of surveying results as well as the implication that over prime fields, in dimension 3, tiling sets are always spectral. This is in contrast to the aforementioned examples which show that the implication tiling $\to$ spectral fails in $\mathbb{Z}_{n}^3$ (where $n$ is some multiple of $24$) and hence in $\mathbb{R}^3$. 

In this paper we study this question in higher dimensions and provide some results and counterexamples. In particular we prove the existence of spectral sets which do not tile in $\mathbb{Z}_p^5$ for all odd primes $p$ and $\mathbb{Z}_p^4$ for all odd primes $p$ such that $p \equiv 3 \text{ mod } 4$. Although counterexamples in low dimensional groups over cyclic rings $\mathbb{Z}_n$ were previously known they were usually for non prime $n$ or a small, sporadic set of primes $p$ rather than general constructions. 

This paper is structured as follows. After summarizing some basic facts about tiling sets and spectral sets in the first two sections, we prove various results regarding them and construct some interesting counterexamples in prime fields. We develop further machinery such as that of Davey matrices which allow human-readable 
verifications of the Fuglede conjecture in $\mathbb{Z}_2^3$ and $\mathbb{Z}_3^3$ which is done in the last section.

We also verify certain conditions where the Fuglede conjecture holds in general:

\begin{theorem}
\label{theorem:main1}
Let $E \subseteq \mathbb{Z}_p^d$, $p$ a prime, and let $|E|$ denote the number of elements of $E$. 
\begin{itemize}
\item[(a)] If $E$ is a tiling set then $|E|=p^r$, $0 \leq r \leq d$. 
\item[(b)] If $E$ is a spectral set then $|E|=1, p^d$ or $|E|=kp$ for some $1 \leq k \leq p^{d-2}$. 
\item[(c)] If $E$ is a spectral set in $\mathbb{Z}_2^d$ then $|E|=1,2$ or a multiple of $4$. 
\item[(d)] A set $E$ tiles with a subspace tiling partner $V$ if and only if $E$ is spectral with spectrum $V^{\perp}$. 
This happens if and only if $E$ is a full graph set. 
\item[(e)] If $|E|=p, p^{d-1}$ then $E$ tiles if and only if $E$ is spectral. Furthermore $E$ has a subspace tiling partner and hence is a graph set when this occurs. 
\item[(f)] In dimensions $d \leq 2$, $E$ tiles if and only if $E$ is spectral. Furthermore $E$ is a graph set when this occurs. (First obtained in \cite{IMP15}). 
\item[(g)] In dimension $d=3$, $E$ tiles $\rightarrow$ $E$ spectral. Furthermore $E$ is a graph set when this occurs. 
\item[(h)] If $E$ is any spectral set then $E$ either tiles or $k$-tiles where $k=\frac{|E|}{p}$ with a hyperplane partner. 
\item[(i)] If $E$ is a spectral set of size $mp$ in $\mathbb{Z}_p^d$ then there exists a function 
$f: E \to \mathbb{Z}_m$ such that $\text{Graph}(f) = \{ (e,f(e)) | e \in E \} \subseteq \mathbb{Z}_p^d \times \mathbb{Z}_m$ tiles 
$\mathbb{Z}_p^d \times \mathbb{Z}_m$. Furthermore the projection (forgetting the last coordinate) from 
$\mathbb{Z}_p^d \times \mathbb{Z}_m \to \mathbb{Z}_p^d$ takes $\text{Graph}(f)$ bijectively to $E$. 
Thus every spectral set is the bijective image of a tiling set under a projection. 
\end{itemize}
\end{theorem}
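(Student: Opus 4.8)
My plan is to run every part through Fourier analysis on $G=\mathbb{Z}_p^d$. For $f\colon G\to\mathbb{C}$ set $\widehat f(\xi)=\sum_{x}f(x)e^{-2\pi i\,\xi\cdot x/p}$; then $E$ tiles $G$ with partner $T$ iff $|E|\,|T|=p^d$ and $\widehat{1_E}(\xi)\widehat{1_T}(\xi)=0$ for every $\xi\neq0$, while $E$ is spectral with spectrum $\Lambda$ iff $|\Lambda|=|E|$ and $\widehat{1_E}(a-b)=0$ for all distinct $a,b\in\Lambda$ (equivalently the $|E|\times|E|$ matrix $\bigl(e^{2\pi i\,a\cdot x/p}\bigr)_{a\in\Lambda,\,x\in E}$ has all entries of modulus $1$ and pairwise-orthogonal rows, i.e.\ is a complex Hadamard matrix). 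The one genuine external input I would isolate first is the cyclotomic fact that a vanishing sum of $p$-th roots of unity has all multiplicities equal: if $\widehat{1_E}(\xi)=0$ with $\xi\neq0$ then, with $n_j=\#\{x\in E:\xi\cdot x=j\}$, all $n_j=|E|/p$. Two corollaries are used everywhere: (i) the zero set of $\widehat{1_E}$ off $0$ is a union of punctured lines through the origin (vanishing at $\xi$ says that $E$ equidistributes over the cosets of $\xi^{\perp}$, which depends only on the line $\mathbb{Z}_p\xi$); and (ii) whenever $\widehat{1_E}(\xi)=0$ with $\xi\neq0$, the set $E$ equidistributes over the $p$ cosets of the hyperplane $\xi^{\perp}$, so $p\mid|E|$ and $E$ $(|E|/p)$-tiles $G$ with that hyperplane.

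Parts (a), the divisibility halves of (b) and (c), and all of (h) then come quickly. (a): $|E|\,|T|=p^d$ forces $|E|=p^r$. (b) and (h): if $|E|>1$ and $E$ is spectral, choose distinct $a,b$ in the spectrum; then $0\neq a-b$ lies in the zero set of $\widehat{1_E}$, so by (ii) $p\mid|E|$ and $E$ $(|E|/p)$-tiles $G$ with the hyperplane $(a-b)^{\perp}$, a genuine tiling exactly when $|E|=p$. The ``$4\mid|E|$'' clause of (c): for $p=2$ the character matrix is a real $(\pm1)$ Hadamard matrix of order $|E|$, and once $|E|>2$ the classical three-row argument --- normalize one row to all $1$'s and count the four sign patterns of two further rows against their orthogonality relations --- yields $4\mid|E|$. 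For the remaining upper bound $|E|\le p^{d-1}$ in (b) when $|E|\neq p^d$, I would first use complementation: since $\widehat{1_{E^c}}(\xi)=-\widehat{1_E}(\xi)$ for $\xi\neq0$, the spectrum $\Lambda$ of $E$ is an orthogonal family of characters in $L^2(E^c)$ as well, hence $|E|=|\Lambda|\le|E^c|=p^d-|E|$, i.e.\ $|E|\le p^d/2$; then I would close the gap by induction on $d$, slicing $E$ along a hyperplane over which it equidistributes and feeding in the restriction lemma described at the end.

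Part (d) is pure unwinding: for a subspace $V$ one has $\widehat{1_V}=|V|\cdot 1_{V^{\perp}}$, so $E\oplus V=G$ is equivalent to $|E|=[G:V]$ together with $\widehat{1_E}$ vanishing on $V^{\perp}\setminus\{0\}$; as $V^{\perp}$ is a subgroup, $V^{\perp}-V^{\perp}=V^{\perp}$, so this is exactly ``$V^{\perp}$ is a spectrum of $E$'', and $E\oplus V=G$ means precisely that $E$ is a transversal of the cosets of $V$, i.e.\ a full graph set. For (e), by (d) it suffices to show that a tiler of size $p$ (resp.\ of size $p^{d-1}$) has a linear-subspace tiling partner: the ``spectral $\Rightarrow$ tiling'' direction for these sizes is already in (h), and I would get the ``tiling $\Rightarrow$ subspace partner'' direction by combining the punctured-line structure of $\{\widehat{1_E}=0\}$ with an inductive analysis of the tiling complement (for $p\le 3$ a count of non-vanishing lines suffices, but for larger $p$ one must use the geometry of the complement). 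Granting (e), parts (f) and (g) drop out of (a) and (b): in dimension $\le 2$ the only admissible cardinalities are $1,p,p^2$ and in dimension $3$ they are $1,p,p^2,p^3$; cardinalities $1$ and $p^d$ are trivially spectral with a trivial graph-set structure, and cardinalities $p$ and $p^{d-1}$ are covered by (e). Finally (i) is explicit: by (h) pick a hyperplane $H$ over whose cosets $H_0,\dots,H_{p-1}$ the set $E$ equidistributes, so $|E\cap H_i|=m$; choose arbitrary bijections $g_i\colon E\cap H_i\to\mathbb{Z}_m$ and put $f=g_i$ on $E\cap H_i$. Then $\mathrm{Graph}(f)\oplus\bigl(H\times\{0\}\bigr)=\mathbb{Z}_p^d\times\mathbb{Z}_m$, since recovering $(y,t)$ forces $e\in E\cap(y+H)$ with $f(e)=t$, which has the unique solution $e=g_i^{-1}(t)$ for the $i$ with $y\in H_i$; and forgetting the last coordinate is a bijection $\mathrm{Graph}(f)\to E$ by construction.

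The hard parts are the two lemmas flagged above. The first is the restriction/slicing lemma behind the induction for the upper bound in (b): a spectral set that equidistributes over the cosets of a hyperplane $H$ must restrict to a spectral set on each coset. I would prove it by pushing the spectrum $\Lambda\subseteq(\mathbb{Z}_p^d)^{*}$ forward along $(\mathbb{Z}_p^d)^{*}\to(\mathbb{Z}_p^d)^{*}/H^{\perp}\cong H^{*}$ and checking that the (at most $p$) characters in each fiber, which on a coset of $H$ differ only by a root-of-unity scalar, collapse to an orthogonal family of the correct size on each slice. The second is the structural statement inside (e) --- that prime-size (and index-$p$) tilers of $\mathbb{Z}_p^d$ are graph sets --- where the naive Fourier/counting bound is decisive only for $p\in\{2,3\}$ and the general case needs real control of the tiling partner; this is where I expect the main difficulty to lie. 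The ``$4\mid|E|$'' clause of (c) also leans on the standard (but not purely formal) bound on orders of real Hadamard matrices.
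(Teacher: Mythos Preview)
Your overall framework matches the paper closely for parts (a), (c), (d), (h), (i), and the divisibility half of (b); the arguments are essentially the same as the paper's. There are, however, two genuine gaps, and they share a common missing ingredient.

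First, the restriction lemma you propose for the upper bound $|E|\le p^{d-1}$ in (b) is \emph{false}. Take any spectral set $E\subseteq\mathbb{Z}_p^d$ of size $mp$ with $1<m<p$ (these exist, e.g.\ Tao's set of size $6$ in $\mathbb{Z}_3^5$, or the size-$2p$ sets built in this very paper). Such an $E$ equidistributes over the $p$ cosets of some hyperplane, so each slice has exactly $m$ points; but an $m$-element set with $1<m<p$ can never be spectral in $\mathbb{Z}_p^{d-1}$, since $p\nmid m$. So slices of spectral sets are not spectral in general, and the induction collapses. Your complementation bound $|E|\le p^d/2$ is correct and elegant, but it only finishes the job for $p=2$.

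Second, in (e) you claim that (h) already gives ``spectral $\Rightarrow$ tiling'' for $|E|=p^{d-1}$. It does not: (h) yields $(|E|/p)$-tiling with a hyperplane partner, and $|E|/p=p^{d-2}$, which is $1$ only when $d=2$. So the spectral $\Rightarrow$ tiling direction for codimension-one sets is not covered.

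The paper's fix for \emph{both} gaps is a single geometric tool you do not invoke: the direction cone. One shows that if $|A|>p^{d-1}$ then the direction cone $\text{DirC}(A)=\{r(a-a'):a\neq a'\in A,\ r\neq 0\}$ is all of $\mathbb{Z}_p^d\setminus\{0\}$ (via a pigeonhole/projection argument). For the upper bound in (b): if $(E,A)$ is spectral with $|A|=|E|>p^{d-1}$ then $\widehat{1_E}$ vanishes on $\text{DirC}(A)=\mathbb{Z}_p^d\setminus\{0\}$, forcing $E=\mathbb{Z}_p^d$. For (e) with $|E|=p^{d-1}$: whether $E$ is tiling or spectral, one produces a line $L$ missing from $\text{DirC}(E)$ (in the tiling case because $\text{DirC}(E)\cap\text{DirC}(A)=\emptyset$; in the spectral case because $\widehat{1_B}$ vanishes on $\text{DirC}(E)$ and $B$ is not the whole space), and then $(E,L)$ is a tiling pair directly. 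This also dissolves the difficulty you anticipated in the ``tiling $\Rightarrow$ subspace partner'' step: no induction on the complement is needed, the direction-cone argument is one line.
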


We next construct low-dimensional examples of spectral sets which do not tile for all odd primes. 
In general constructing counter-examples over prime fields is more constrained than over cyclic groups of non-prime order. For example 
though the implication tiling $\to$ spectral is true in $\mathbb{Z}_p^3$ for any prime $p$, it is not true in some $\mathbb{Z}_{n}^3$ when $n$ is not prime.  See \cite{FMM06} for a construction of this type. 

\begin{theorem}
\label{theorem:main2}
Let $p$ be an odd prime then:
\begin{itemize}
\item[(a)] There are examples of spectral sets of size $2p$ in $\mathbb{Z}_p^5$ which do not tile for every odd prime $p$. 
\item[(b)] When $p \equiv 3 \text{ mod } 4$ there are examples of sets of size $2p$ in $\mathbb{Z}_p^4$ which do not tile. 
\end{itemize}
Thus the implication $E$ spectral $\to$ $E$ tiles is always false in $5$ or more dimensions over $\mathbb{Z}_p$, $p$ any odd prime and 
is always true in $2$ or less dimensions for all primes. For $p \equiv 3 \text{ mod } 4$ it is false in dimension $4$ also. 
\end{theorem}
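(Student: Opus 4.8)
Here the ``does not tile'' half of both statements is immediate from Theorem~\ref{theorem:main1}(a): a tiling subset of $\mathbb{Z}_p^d$ has size a power of $p$, and for $p$ an odd prime $2p$ is not a power of $p$. So the whole content of Theorem~\ref{theorem:main2} is the construction of \emph{spectral} sets of size $2p$ --- in $\mathbb{Z}_p^5$ for every odd $p$, and in $\mathbb{Z}_p^4$ when $p\equiv 3\bmod 4$. I would first recast this as a problem over $\mathbb{Z}_p$. A set $E=\{e_1,\dots,e_{2p}\}$ is spectral with spectrum $S=\{s_1,\dots,s_{2p}\}$ exactly when the $2p\times 2p$ matrix with entries $e^{2\pi i(s_k\cdot e_j)/p}$ has pairwise orthogonal rows; its entries are $p$-th roots of unity, and since a vanishing sum of $p$-th roots of unity must use each root with equal multiplicity, orthogonality of two rows says precisely that the difference of the corresponding rows of the exponent matrix $M=(s_k\cdot e_j)_{k,j}\in\mathbb{Z}_p^{2p\times 2p}$ is \emph{balanced}: it meets each residue of $\mathbb{Z}_p$ in exactly two of its $2p$ coordinates. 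Call such an $M$ \emph{log-Hadamard}. Since $M$ is the product of the two $2p\times d$ coordinate matrices of $S$ and $E$, it has $\mathbb{Z}_p$-rank at most $d$; conversely a log-Hadamard matrix (and, one checks, its transpose, using that a complex Hadamard matrix has orthogonal columns) has pairwise distinct rows, so a rank factorization $M=AB$ produces an $S$ and $E$ of size $2p$ inside $\mathbb{Z}_p^{\operatorname{rank}M}$. Thus part~(a) reduces to producing, for each odd $p$, a log-Hadamard $2p\times 2p$ matrix over $\mathbb{Z}_p$ of rank at most $5$, and part~(b) one of rank at most $4$ when $p\equiv 3\bmod 4$; Tao's $\mathbb{Z}_3^5$ example is exactly the case $p=3$ of (a), where moreover the matrix can be taken symmetric (so $S=E$ is a set whose Gram matrix is log-Hadamard, i.e.\ $\widehat{1_E}$ vanishes on $(E-E)\setminus\{0\}$).

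For part~(a) the plan is to give an explicit $p$-uniform generalization of that $6\times 6$ matrix, with rows and columns indexed by $\mathbb{Z}_p\times\{0,1\}$: a ``Fourier'' bilinear term $ij$ together with an explicit correction depending on the $\{0,1\}$-labels and on $i,j$, arranged so that row differences of equal parity are balanced by the Fourier term alone while the correction balances the opposite-parity differences. Verifying the log-Hadamard property is then a finite, $p$-independent check: one splits row differences into a bounded list of types and shows each covers $\mathbb{Z}_p$ evenly, which comes down to the facts that an affine map $j\mapsto\alpha j+\beta$ with $\alpha\neq 0$ permutes $\mathbb{Z}_p$ and that the correction partitions the $2p$ columns in a controlled way. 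One then checks directly that the $\mathbb{Z}_p$-rank of this matrix is at most $5$ regardless of $p$, and a rank factorization places $E$ (and $S$) in $\mathbb{Z}_p^5$.

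For part~(b) the extra ingredient when $p\equiv 3\bmod 4$ is that $-1$ is a quadratic non-residue, so the norm form $N(x,y)=x^2+y^2$ on $\mathbb{Z}_p^2$ is anisotropic and the associated Gauss sums $\sum_{(x,y)}e^{2\pi i(aN(x,y)+\ell(x,y))/p}$ vanish in a wider range. The plan is to use this to shave one dimension: either take the correction above to be quadratic so that the block controlling the rank degenerates precisely when $x^2+y^2$ does, or build $E\subseteq\mathbb{Z}_p^4$ of size $2p$ directly from conic and line pieces and verify that $\widehat{1_E}$ vanishes off $(E-E)\setminus\{0\}$ using exactly those Gauss-sum cancellations. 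Putting (a) and (b) together with Theorem~\ref{theorem:main1}(f) gives the displayed summary. The step I expect to be the real obstacle is precisely the explicit construction and its balance verification --- finding a formula that works uniformly in $p$, and in particular understanding cleanly why the dimension drops to $4$ only when $-1$ is a non-residue, i.e.\ why the relevant exponential sums stop vanishing (so the $4$-dimensional argument genuinely fails) when $p\equiv 1\bmod 4$.
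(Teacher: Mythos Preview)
Your reduction is exactly the one the paper uses: once the equivalence between spectral pairs of size $2p$ in $\mathbb{Z}_p^d$ and $2p\times 2p$ log-Hadamard matrices over $\mathbb{Z}_p$ of rank $\le d$ is in place (Theorem~\ref{theorem:logHadamardSpectralEquivalence}), and once one notes that $2p$ is not a power of $p$, the theorem becomes a matter of exhibiting such a matrix of rank $\le 5$, respectively $\le 4$. That framing is correct and matches the paper verbatim.

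The genuine gap is that you stop at the plan. You say the matrix should be ``a Fourier bilinear term $ij$ together with an explicit correction'' and that ``verifying the log-Hadamard property is then a finite, $p$-independent check,'' but you never write the matrix down, and you yourself flag this as the real obstacle. That is the entire content of the theorem. The paper's construction (following Butson) is the block matrix
\[
\mathbb{L}=\begin{bmatrix}\mathbb{A}&n\mathbb{A}\\ \mathbb{B}&\mathbb{C}\end{bmatrix},\qquad
\mathbb{A}_{ij}=2ij-i^2,\quad \mathbb{B}_{ij}=(j-ni)^2,\quad \mathbb{C}_{ij}=n(i-j)^2,
\]
with $n$ a fixed nonsquare. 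The same-parity row differences are handled by affine arguments as you anticipated, but the cross differences (a top row minus a bottom row) are a pair of genuine quadratics $Q_1(j),Q_2(j)$, and the balance is proved via the quadratic formula: one checks $a=n\tilde a$ and $b^2-4ac=n(\tilde b^2-4\tilde a\tilde c)$, so the discriminants of $Q_1-\mu$ and $Q_2-\mu$ are always in opposite quadratic-residue classes, forcing $|Q_1^{-1}(\mu)|+|Q_2^{-1}(\mu)|=2$. The rank bound comes from exhibiting five explicit vectors $[j\,|\,nj],\ [1\,|\,n],\ [j\,|\,j],\ [n\,|\,1],\ [j^2\,|\,nj^2]$ whose span contains every row.

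Your approach to part~(b) via Gauss sums and the anisotropic norm form is more elaborate than what is needed, and it is not how the paper proceeds. The paper simply takes $n=-1$ when $p\equiv 3\bmod 4$ (valid exactly because $-1$ is then a nonsquare), and observes that in that case two of the five spanning vectors, $[1\,|\,n]$ and $[n\,|\,1]$, become scalar multiples of one another, so the rank drops to $4$. There is no exponential-sum computation; the whole $p\equiv 3\bmod 4$ phenomenon is just this one linear-algebra coincidence. Your instinct that quadratic residuosity of $-1$ is the crux is correct, but the mechanism is far more elementary than Gauss sums.
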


Theorem~\ref{theorem:main1} and \ref{theorem:main2} essentially settle the status of Fuglede's conjecture over prime cyclic rings 
in all dimensions except three. In 3 dimensions, we provide a readable proof that Fuglede's conjecture holds over 
$\mathbb{Z}_2^3$ and $\mathbb{Z}_3^3$ in Theorem~\ref{theorem:3Dverifications} but it remains open for higher primes.

The existence of nonprime counterexamples is not indicative in this regard as the implication tiling $\to$ spectral holds in $\mathbb{Z}_p^3$ 
when $p$ is prime but not in general when $\mathbb{Z}_{n}^3$ is not prime and so the restriction to prime cyclic rings is important and known to make a difference.

\section{Fourier transform and cones}

Let $f: \mathbb{Z}_n^d \to \mathbb{C}$ be a complex-valued function, the Fourier transform $\widehat{f}$ of $f$ is defined via
$$
\widehat{f}(m) = \frac{1}{n^d} \sum_{x \in \mathbb{Z}_n^d} f(x) \chi(-x \cdot m)
$$
where $x \cdot m = x_1m_1 + \dots + x_d m_d$ is the ``dot product'' and $\chi(u) = e^{\frac{2 \pi i u}{n}}$ is the canonical additive character of 
$\mathbb{Z}_n$.

If $\mathfrak{1}$ denotes the constant function with value $1$ then one easily computes that $\widehat{\mathfrak{1}}(m)= \delta(m)$ where $\delta$ is the Kronecker delta function.

A crucial fact about the Fourier transform over prime fields $\mathbb{Z}_p$ when $p$ a prime is the following equidistribution result which can be found in 
\cite{IMP15} for example:

\begin{lemma} Let $p$ be a prime and $E \subseteq \mathbb{Z}_p^d$ and $m \in \mathbb{Z}_p \backslash \{0 \}$. The following are equivalent:  \\
\label{lemma:main} 
\begin{itemize}
\item[(1)] $\widehat{E}(m) = 0$.
\item[(2)] $E$ equidistributes on the $p$ parallel hyperplanes $H_{m,t} = \{ x | x \cdot m = t \}$, $t=0,1,\dots,p-1$. 
\item[(3)] $\widehat{E}(rm)=0$ for all $r \in \mathbb{Z}_p - \{ 0 \}$.
\end{itemize}
\end{lemma}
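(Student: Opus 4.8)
The plan is to reduce all three conditions to a single statement about vanishing sums of $p$-th roots of unity. Set $\omega = \chi(1) = e^{2\pi i/p}$, a primitive $p$-th root of unity, and for each $t \in \mathbb{Z}_p$ let $a_t = |E \cap H_{m,t}|$ count the points of $E$ lying on the hyperplane $\{x : x\cdot m = t\}$. Grouping the defining sum of $\widehat{E}$ according to which of these $p$ hyperplanes a point of $E$ lies on yields
$$\widehat{E}(m) = \frac{1}{p^d}\sum_{x\in E}\chi(-x\cdot m) = \frac{1}{p^d}\sum_{t=0}^{p-1} a_t\,\omega^{-t}.$$
Moreover, since $r$ is invertible modulo $p$ we have $H_{rm,s} = H_{m,r^{-1}s}$, so the partition of $\mathbb{Z}_p^d$ into hyperplanes normal to $rm$ is the same as the one normal to $m$ with the labels permuted; in particular condition (2) is unchanged when $m$ is replaced by $rm$, and $\widehat{E}(rm) = \frac{1}{p^d}\sum_{t} a_t\,\omega^{-rt}$.

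The one arithmetic ingredient I would invoke is that, because $p$ is prime, the cyclotomic polynomial $\Phi_p(X) = 1 + X + \cdots + X^{p-1}$ is irreducible over $\mathbb{Q}$; equivalently, the only $\mathbb{Q}$-linear relation among $1, \omega, \dots, \omega^{p-1}$ is a scalar multiple of $1 + \omega + \cdots + \omega^{p-1} = 0$. Consequently, for nonnegative integers $a_0, \dots, a_{p-1}$ the sum $\sum_{t} a_t\,\omega^{t}$ vanishes if and only if $a_0 = a_1 = \cdots = a_{p-1}$ (each then necessarily equal to $|E|/p$). This is precisely where primality of $p$ enters; for composite $n$ there are genuinely shorter vanishing sums of $n$-th roots of unity, and the analogous statement fails.

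With these two facts the equivalences fall out. Condition (1) reads $\sum_t a_t\,\omega^{-t} = 0$, which after the index substitution $t \mapsto -t$ (a bijection of $\mathbb{Z}_p$) is a relation of the form $\sum_t b_t\,\omega^{t} = 0$ with $b_t = a_{-t}$; by the cyclotomic fact this holds iff all $a_t$ are equal, i.e. iff $E$ equidistributes among the hyperplanes $H_{m,t}$ — condition (2). For (2) $\Rightarrow$ (3): if every $a_t = |E|/p$, then for each $r \neq 0$ we get $\widehat{E}(rm) = \frac{|E|}{p^{d+1}}\sum_{t}\omega^{-rt} = 0$. Finally (3) $\Rightarrow$ (1) is the case $r = 1$. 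I do not anticipate any real obstacle beyond keeping the substitutions $t\mapsto -t$ and $s \mapsto r^{-1}s$ straight; the only substantive input is the classical irreducibility of $\Phi_p$ over $\mathbb{Q}$.
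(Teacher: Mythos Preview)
Your argument is correct. The paper itself does not supply a proof of this lemma; it simply states the result and refers the reader to \cite{IMP15}. So there is nothing in the paper to compare your approach against directly.

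That said, what you have written is exactly the standard proof, and each step is sound. The grouping $\widehat{E}(rm) = p^{-d}\sum_t a_t\,\omega^{-rt}$ is immediate, and the key arithmetic input---that $\sum_{t=0}^{p-1} a_t\,\omega^t = 0$ with $a_t \in \mathbb{Q}$ forces all $a_t$ equal---is precisely the statement that $1,\omega,\dots,\omega^{p-2}$ form a $\mathbb{Q}$-basis of $\mathbb{Q}(\omega)$, equivalently that $\Phi_p$ is irreducible over $\mathbb{Q}$. Your remark that this is the only place primality enters, and that the lemma fails for composite $n$, is also correct and worth keeping. The chain $(1)\Leftrightarrow(2)\Rightarrow(3)\Rightarrow(1)$ is clean; no gaps.
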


\vskip.125in 

\begin{definition}
A punctured line in $\mathbb{Z}_p^d$ is a line through the origin with the origin removed.  
A {\bf cone} in $\mathbb{Z}_p^d$ is a set which is a union of punctured lines. Equivalently $C$ is a cone if and only if for every $c \in C$ and $r \in \mathbb{Z}_p-\{ 0 \}$ we have $rc \in C$. 

\vskip.125in 

Some important examples of cones that we will use are the following: \\
For any $E \subseteq \mathbb{Z}_p^d$ we define 
$Z(\widehat{E}) = \{ m | \widehat{E}(m) = 0 \}$ the {\bf zero cone} of $\widehat{E}$ and
$\text{Spt}(\widehat{E}) = \mathbb{Z}_p^d - Z(\widehat{E}) - \{ 0 \}$ the { \bf support cone } of $\widehat{E}$. 
These are cones because of Lemma~\ref{lemma:main}.

We also define the {\bf direction set} of $E$, $\text{Dir}(E)=\{ e_1 - e_2 | e_1 \neq e_2 \in E \}$ and {\bf direction cone} of $E$, 
$\text{\text{Dir}C}(E) = \{ r(e_1 - e_2) | e_1 \neq e_2 \in E, r \in \mathbb{Z}_p - \{ 0 \} \}$.  When working over $\mathbb{Z}_n$, $n$ not a prime, we will still use 
the same notation for these sets though some of them will no longer be cones.
\end{definition}

Another cone of importance for us later is the {\bf cone of balanced vectors} of dimension $mp$ over $\mathbb{Z}_p$. A vector is balanced if every element of 
the field $\mathbb{Z}_p$ occurs equally often as a coordinate of the vector. Clearly they exist only in dimensions which are multiples of $p$ 
and the set $B_m$ of balanced $mp$-dimensional vectors is easily seen to be a cone. In fact as adding any multiple of the all one vector to a balanced 
vector maintains balance, it is easy to see the set $B_m$ is a union of $2$-dimensional subspaces containing the line $L$ through the all one vector 
minus the line $L$ itself.

\section{Tilings}
\label{sect:tiling}

Let $E \subseteq \mathbb{Z}_n^d$. Then we say that $E$ tiles if it has a tiling partner $A$ such that the translates $\{ E+a | a \in A\}$ partition 
$\mathbb{Z}_n^d$. Note that in particular this implies $|E||A|=n^d$ and in particular $|E|$ is a divisor of $n^d$. When $n=p$ is prime this forces 
$|E|=p^r$ for some $0 \leq r \leq d$. The following are equivalent formulations of tiling sets:

\begin{theorem} 
\label{theorem:basicpicturetiling}
Let $E,A$ be subsets of $\mathbb{Z}_n^d$ with $|E||A|=n^d$. The following are equivalent:
\begin{itemize}
\item[(a)] Every $x \in \mathbb{Z}_n^d$ can be written uniquely in the form $e+a$, $e \in E, a \in A$. 
\item[(b)] $\mathbb{Z}_n^d = \cup_{a \in A} (E+a)$ where the union is disjoint.
\item[(c)] $E \star A = \mathfrak{1}$ where $\star$ is the discrete convolution operator, and we now use $E$ (respectively $A$) to stand for the characteristic function 
of the corresponding set. Here $\mathfrak{1}$ is the constant function with value one. 
\item[(d)] $\widehat{E}(m)\widehat{A}(m) = 0$ for all nonzero $m$. 
\item[(e)] $\text{Spt}(\widehat{E}) \cap \text{Spt}(\widehat{A}) = \emptyset$. 
\item[(f)] $Z(\widehat{E}) \cup Z(\widehat{A}) = \mathbb{Z}_n^d - \{ 0 \}$. 
\item[(g)] $\text{Dir}(E) \cap \text{Dir}(A) = \emptyset$. 

If $n=p$ is a prime, then these are also equivalent to: 
\item[(h)] $\text{\text{Dir}C}(E) \cap \text{DirC}(A) = \emptyset$. 
\end{itemize}
Furthermore if $s_1, s_2 \in \mathbb{Z}_p - \{ 0 \}$, $m_1, m_2 \in \mathbb{Z}_p^d$ then $(E,A)$ tiles if and only if 
$(s_1E+m_1, s_2A+m_2)$ tiles. Thus we can and often will assume $\vec{0}$ is in $E \cap A$ and indeed when we do this, 
we have to have $E \cap A = \{ \vec{0} \}$.
\end{theorem}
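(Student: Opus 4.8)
The plan is to prove (a)--(g) equivalent by a short cycle of implications together with a couple of direct links, then establish the ``furthermore'' clause, and finally deduce (h) in the prime case from the scaling statement. First I would handle (a) $\Leftrightarrow$ (b) and (b) $\Leftrightarrow$ (c), which are essentially unwinding definitions: disjointness of the union $\bigcup_{a\in A}(E+a)$ is the statement that $e+a=e'+a'$ forces $a=a'$ (hence $e=e'$), i.e.\ uniqueness of the representation $x=e+a$, while the union covering $\mathbb{Z}_n^d$ is existence of such a representation; and $(E\star A)(x)$ counts, by definition of discrete convolution, the number of pairs $(e,a)\in E\times A$ with $e+a=x$, so $E\star A=\mathfrak{1}$ says precisely that every $x$ has a unique representation.

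For (c) $\Leftrightarrow$ (d) I would pass to the Fourier side: with the given normalization a one-line computation gives $\widehat{E\star A}(m)=n^{d}\,\widehat{E}(m)\widehat{A}(m)$, while $\widehat{\mathfrak{1}}(m)=\delta(m)$; at $m=0$ one has $n^{d}\widehat{E}(0)\widehat{A}(0)=|E||A|/n^{d}=1$ by the cardinality hypothesis, matching $\delta(0)=1$, so by Fourier inversion $E\star A=\mathfrak{1}$ is equivalent to $\widehat{E}(m)\widehat{A}(m)=0$ for all $m\neq0$. The links (d) $\Leftrightarrow$ (e) $\Leftrightarrow$ (f) are then formal: (d) says no nonzero $m$ lies in both $\text{Spt}(\widehat{E})$ and $\text{Spt}(\widehat{A})$, and complementing inside $\mathbb{Z}_n^d-\{0\}$ (note $0\notin Z(\widehat{E})$ since $\widehat{E}(0)=|E|/n^{d}\neq0$) turns ``$\text{Spt}(\widehat{E})\cap\text{Spt}(\widehat{A})=\emptyset$'' into ``$Z(\widehat{E})\cup Z(\widehat{A})=\mathbb{Z}_n^d-\{0\}$''. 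To close the loop I would prove (a) $\Leftrightarrow$ (g) directly: if $e_1-e_2=a_2-a_1$ with $e_1\neq e_2$ then $e_1+a_1=e_2+a_2$ are two distinct representations, so a tiling forces $\text{Dir}(E)\cap\text{Dir}(A)=\emptyset$; conversely, disjointness of the direction sets makes $e_1+a_1=e_2+a_2$ force $e_1=e_2$ and then $a_1=a_2$, so the $|E||A|=n^{d}$ sums $e+a$ are pairwise distinct and therefore exhaust $\mathbb{Z}_n^d$.

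For the ``furthermore'' clause I would note that translation invariance is immediate from $\widehat{E+m_1}(m)=\chi(-m_1\cdot m)\widehat{E}(m)$, which has the same zero set as $\widehat{E}$, so criterion (f) is unaffected; and once $(E,A)$ tiles we may translate so that $\vec{0}\in E\cap A$, after which the unique representation $\vec{0}=\vec{0}+\vec{0}$ in (a) forbids any further $x\in E\cap A$ (otherwise $x+\vec{0}=\vec{0}+x$ would be two distinct representations of $x$). The genuinely prime-field ingredient is scaling invariance, and this is exactly where Lemma~\ref{lemma:main} enters: since $\widehat{sE}(m)=\widehat{E}(sm)$, if $\widehat{s_1E}(m)\neq0$ then $\widehat{E}(s_1m)\neq0$, whence $\widehat{A}(s_1m)=0$ by (d), whence $\widehat{A}(s_2m)=0$ by the equivalence (1) $\Leftrightarrow$ (3) of Lemma~\ref{lemma:main} applied to $A$ at the nonzero vector $s_1m$ (as $s_2m$ is a nonzero multiple of it); thus $\widehat{s_1E}(m)\widehat{s_2A}(m)=0$ for all $m\neq0$, and (d) for the scaled pair, together with the unchanged cardinality product, gives that $(s_1E+m_1,s_2A+m_2)$ tiles, the reverse direction being the same statement applied to the inverse scalars and translations.

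Finally, with scaling invariance in hand, (h) in the prime case is quick: (h) $\Rightarrow$ (g) is trivial because $\text{Dir}(E)\subseteq\text{DirC}(E)$ and likewise for $A$, while conversely $(E,A)$ tiling implies $(sE,tA)$ tiles for all nonzero $s,t$, so $s\,\text{Dir}(E)\cap t\,\text{Dir}(A)=\emptyset$ for all such $s,t$, and taking the union over $s$ and $t$ gives $\text{DirC}(E)\cap\text{DirC}(A)=\emptyset$. I expect the scaling-invariance step to be the real crux: it is short, but it is the one place the whole chain uses primality (via Lemma~\ref{lemma:main}) and the only place it would break down over a non-prime $\mathbb{Z}_n$; everything else is bookkeeping with convolutions, Fourier inversion, and set complements.
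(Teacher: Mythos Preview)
Your proposal is correct and follows essentially the same route as the paper: the same chain (a)$\Leftrightarrow$(b)$\Leftrightarrow$(c)$\Leftrightarrow$(d)$\Leftrightarrow$(e)$\Leftrightarrow$(f), the direct link (a)$\Leftrightarrow$(g), and the derivation of (h) from scaling invariance via Lemma~\ref{lemma:main}. The only cosmetic differences are that you verify translation invariance on the Fourier side while the paper checks it directly from the definition, and you handle the two independent scalings $s_1,s_2$ in one pass whereas the paper scales $E$ first and then invokes the symmetry $(E,A)\leftrightarrow(A,E)$ to scale $A$.
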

\begin{proof}
Condition (a) is immediately seen to be equivalent to the definition of a tiling set $E$ with tiling partner $A$. The definition we have taken as our initial one is (b).
(c) is seen to be equivalent immediately as $E \star A (x) = \sum_y E(y)A(x-y)$ counts the number of ways to write $x$ as a sum of something in $E$ with something in $A$. Taking the Fourier transform of the equation $E \star A = \mathfrak{1}$ yields the equation $\widehat{E}\widehat{A} = \frac{1}{n^d} \delta$ where 
$\delta$ is the Kronecker delta function. Evaluating this at $m=0$ gives $|E||A|=n^d$ which is supposed throughout this theorem anyway and so is redundant. 
Evaluating this at nonzero $m$ yields $\widehat{E}(m)\widehat{A}(m)=0$. This is then equivalent to (c) as the process is invertible using the inverse Fourier transform.
Conditions (e) and (f) are immediately seen to be equivalent to condition (d). Note that 
$\text{Dir}(E) \cap \text{Dir}(A) \neq \emptyset$ if and only if there exists $e \neq e' \in E, a \neq a' \in A$ such that $e-e'=a-a'$ if and only if 
$e+a = e'+a' = \alpha$ expresses $\alpha$ as a sum of an element of $E$ with an element of $A$ in two or more distinct ways.
Thus $(g)$ and $(a)$ are equivalent as condition $(g)$ happens if and only if $|E+A|=|E||A|=|\mathbb{Z}_n^d|$ if and only if condition (a) holds.
When $n=p$ is a prime and $s \in \mathbb{Z}_p - \{ 0 \}$ let $S = \{ x \in \mathbb{Z}_p^d | x = s e \text{ for some } e \in E \}$. We will write 
$S = s \cdot E$ in this situation. Then a quick computation shows that $\widehat{S}(\vec{m}) = \widehat{E}(s\vec{m})$ and so by Lemma~\ref{lemma:main} we conclude 
that $Z(\widehat{S})=Z(\widehat{E})$. As $|S|=|E|$ also, it follows by the equivalence of (d) and (a) that $(E,A)$ is a tiling pair if and only if $(sE,A)$ is a tiling pair. 
As it also clear that $(E,A)$ is a tiling pair if and only if $(A,E)$ is, it then follows that $(sE,tA)$ is a tiling pair whenever $(E,A)$ is where $s,t \in \mathbb{Z}_p-\{0\}$ are arbitrary. Thus if $(E,A)$ tiles, then $(sE, tA)$ tiles for all $t, s \in \mathbb{Z}_p - \{ 0 \}$ and thus by (g), $\text{Dir}(sE) \cap \text{Dir}(tA) = \emptyset$ and so 
$s \cdot \text{Dir}(E) \cap t \cdot \text{Dir}(A) = \emptyset$. As $\text{DirC}(E) = \cup_{s \in \mathbb{Z}_p - \{ 0 \}} s \cdot \text{Dir}(E)$ and similarly for $\text{DirC}(A)$ we conclude 
that $(E,A)$ a tiling pair implies $\text{DirC}(E) \cap \text{DirC}(A) = \emptyset$ which immediately gives the equivalence of $(g)$ and $(h)$ as we already knew the 
equivalence of $(g)$ with $(a)$.

Finally it is easy to check from the initial definition of a tiling pair that $(E,A)$ a tiling pair implies $(E+m_1, A + m_2)$ a tiling pair. Combined with the previous observation this implies $(sE + m_1, tA+m_2)$ is a tiling pair for all $m_1, m_2 \in \mathbb{Z}_p^d, s, t \in \mathbb{Z}_p - \{ 0 \}$. 
Finally note that $(E,A)$ tiling implies there is at most one element $x \in E \cap A$. This is because if there were two distinct elements $x, y \in E \cap A$ then 
$x + y = \alpha = y + x$ expresses $\alpha$ as a sum of something in $E$ and something in $A$ in two distinct ways contradicting the definition of tiling.
Thus if $\vec{0} \in E \cap A$ we have indeed $E \cap A = \{ \vec{0} \}$ as claimed.
\end{proof}

\begin{corollary}
\label{corr:afinetiling}
Let $(E,A)$ be a tiling pair in $\mathbb{Z}_p^d$ and let $\psi_{\mathbb{B},\vec{m}}: \mathbb{Z}_p^d \to \mathbb{Z}_p^d$ be a general affine transformation 
given by $\psi_{\mathbb{B},\vec{m}}(\vec{x}) = \mathbb{B} \vec{x} + \vec{m}$ for some invertible $d \times d$ matrix $\mathbb{B} \in GL_d(\mathbb{Z}_p)$, 
and translation vector $\vec{m} \in \mathbb{Z}_p^d$. Then: 
\begin{itemize}
\item[(1)] $(\psi_{\mathbb{B},\vec{m}}(E), \psi_{\mathbb{B}, \vec{m}}(A))$ is a tiling pair. Thus the property of being a tiling pair is invariant under affine transformations. 
\item[(2)] $(A,E)$ is a tiling pair. Thus the property of being a tiling pair is symmetric. 
\item[(3)] $(sE,tA)$ is a tiling pair for any $s,t \in \mathbb{Z}_p-\{0\}$. Thus the property of being a tiling pair is invariant under independent scalings. 
\end{itemize}
\end{corollary}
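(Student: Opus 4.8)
The plan is to derive all three items from Theorem~\ref{theorem:basicpicturetiling}. Parts (2) and (3) require essentially no new work: part (2) is the assertion that $(E,A)$ is a tiling pair if and only if $(A,E)$ is, which is visible from the symmetric conditions (b)--(d) of that theorem and is in fact recorded explicitly in its proof; part (3) is the statement proven there that if $(E,A)$ tiles then so does $(sE,tA)$ for all $s,t\in\mathbb{Z}_p-\{0\}$, established via the identity $\widehat{sE}(\vec m)=\widehat{E}(s\vec m)$, Lemma~\ref{lemma:main}, and the equivalence of (d) with (a). So for (2) and (3) I would simply cite Theorem~\ref{theorem:basicpicturetiling}.

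For (1) I would factor the affine map $\psi_{\mathbb{B},\vec m}$ as the linear map $L_{\mathbb{B}}\colon\vec x\mapsto\mathbb{B}\vec x$ followed by the translation $\tau_{\vec m}\colon\vec x\mapsto\vec x+\vec m$, and treat the two pieces separately. For the linear piece: since $\mathbb{B}\in GL_d(\mathbb{Z}_p)$, the map $L_{\mathbb{B}}$ is a bijection of $\mathbb{Z}_p^d$ which is additive, so $\mathbb{B}(E+a)=\mathbb{B}E+\mathbb{B}a$ for every $a$; applying the bijection $L_{\mathbb{B}}$ to the partition $\mathbb{Z}_p^d=\bigcup_{a\in A}(E+a)$ (condition (b)) yields the partition $\mathbb{Z}_p^d=\bigcup_{a\in A}(\mathbb{B}E+\mathbb{B}a)=\bigcup_{a'\in\mathbb{B}A}(\mathbb{B}E+a')$, which is precisely the statement that $(\mathbb{B}E,\mathbb{B}A)$ is a tiling pair. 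For the translation piece I would invoke the final paragraph of the proof of Theorem~\ref{theorem:basicpicturetiling}, which records that a tiling pair remains a tiling pair after translating its two members by arbitrary vectors; applying this to $(\mathbb{B}E,\mathbb{B}A)$ with both shift vectors equal to $\vec m$ gives that $(\mathbb{B}E+\vec m,\mathbb{B}A+\vec m)=(\psi_{\mathbb{B},\vec m}(E),\psi_{\mathbb{B},\vec m}(A))$ is a tiling pair, which is (1). If one prefers the translation step self-contained: writing $x=e+a$ uniquely for each $x$ forces $x+\vec m+\vec m=(e+\vec m)+(a+\vec m)$ to be the unique such decomposition, and $x\mapsto x+2\vec m$ is a bijection of $\mathbb{Z}_p^d$.

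There is no real obstacle here; the corollary is bookkeeping on top of Theorem~\ref{theorem:basicpicturetiling}. The two points to keep straight are that $\psi_{\mathbb{B},\vec m}$ is affine rather than linear, so the translation must be peeled off and handled on its own, and that under $L_{\mathbb{B}}$ both $E$ and its partner transform, as $E\mapsto\mathbb{B}E$ and $A\mapsto\mathbb{B}A$, with the index set of the cover changing accordingly. Alternatively, the whole of (1) can be run through the direction-set criterion (g)/(h): $L_{\mathbb{B}}$ is an injective additive map, so $\text{Dir}(\mathbb{B}E)=\mathbb{B}\,\text{Dir}(E)$ and $|\mathbb{B}E|=|E|$, while $\text{Dir}(\cdot)$ is unchanged by translation; hence $\text{Dir}(\psi_{\mathbb{B},\vec m}(E))\cap\text{Dir}(\psi_{\mathbb{B},\vec m}(A))=\mathbb{B}\bigl(\text{Dir}(E)\cap\text{Dir}(A)\bigr)=\emptyset$, and $(\psi_{\mathbb{B},\vec m}(E),\psi_{\mathbb{B},\vec m}(A))$ tiles by the equivalence of (g) with (a) together with the invariance of $|E||A|=p^d$.
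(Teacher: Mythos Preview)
Your proposal is correct and follows essentially the same approach as the paper: you verify directly from the definition that $(\mathbb{B}E,\mathbb{B}A)$ is a tiling pair, then invoke Theorem~\ref{theorem:basicpicturetiling} for the translation and for parts (2) and (3). The paper's proof is in fact terser than yours---it simply asserts the linear step is ``easy to check directly from the definition'' and defers the rest to Theorem~\ref{theorem:basicpicturetiling}---so your write-up supplies the details the paper leaves implicit.
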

\begin{proof}
It is easy to check that $(E,A)$ a tiling pair then $(\mathbb{B}(E), \mathbb{B}(A))$ is a tiling pair directly from the definition of tiling pair. 
The rest then follows immediately from Theorem~\ref{theorem:basicpicturetiling}. 
\end{proof}

Note that part (3) of this corollary does not follow from part (1) unless $s=t$. The point of $(3)$ is that the scalings of $E$ and $A$ can be taken independently, 
i.e., by different amounts.

\section{Spectral sets}
A subset $E \subseteq \mathbb{Z}_n^d$ is a called a {\bf spectral set} if it has a spectrum $B \subseteq \mathbb{Z}_n^d$ such that the set of characters 
$$\{ \chi( b \cdot ( )) | b \in B \}$$ forms an {\bf orthogonal basis} of $L^2(E)$, the vector space of complex valued functions on $E$ with Hermitian inner product 
$<f, g> = \sum_{e \in E} f(e)\bar{g}(e)$. We call $(E,B)$ a spectral pair. Note that in this situation $L^2(E)$ has a basis of size $|B|$ by definition. On the other 
hand, $L^2(E)$ also has a basis of functions $\{ \delta_e | e \in E \}$ where $\delta_e(x)=1$ when $x=e$ and $\delta_e(x)=0$ when $x \neq e$. As any two basis of a finite dimensional vector space have the same size, we conclude $|E|=|B|$ when $(E,B)$ is a spectral pair.

\begin{theorem}
\label{theorem:basicpicturespectral}
Let $E,B$ be subsets of $\mathbb{Z}_n^d$ with $|E|=|B|$. The following are equivalent:
\begin{itemize}
\item[(a)] $(E,B)$ is a spectral pair. 
\item[(b)] Every function $f: E \to \mathbb{C}$ can be written as $f(x) = \sum_{b \in B} c_b \chi(b \cdot x)$ for unique complex numbers $\{ c_b | b \in B \}$ and all $x \in E$. 
Furthermore $\sum_{x \in E} \chi((b-b') \cdot x) = 0$ for distinct $b, b' \in B$. 
\item[(c)] $\widehat{E}(b-b')=0$ for all $b \neq b' \in B$.
\item[(d)] $\widehat{E}(\text{Dir}(B)) = 0$. 
\item[(e)] Write $E=\{e_1, \dots, e_N \}$ and $B=\{ b_1, \dots, b_N \}$. The complex matrix $\mathbb{M} \in Mat_{N}(\mathbb{C})$ whose $(i,j)$-entry is given by $\mathbb{M}_{ij} = \chi(e_i \cdot b_j)$ is a Butson-type 
Hadamard matrix, i.e., its entries are $n$th roots of unity and it satisfies $\mathbb{M}^*\mathbb{M}=N \mathbb{I}$, i.e., its rows (equivalently columns) are 
orthogonal and all have norm $N$ under the Hermitian inner product. Here, as usual, $\mathbb{M}^*$ is the complex conjugate of the transpose of $\mathbb{M}$.\\
In particular this means $(E,B)$ is a spectral pair if and only if $(B,E)$ is spectral pair. 
\end{itemize}
When $n=p$ a prime, these are also equivalent to: 
\begin{itemize}
\item[(f)] $\widehat{E}(\text{DirC}(B))=0$. 
\item[(g)] Write $E=\{e_1,\dots,e_N\}$ and $B=\{ b_1, \dots, b_N \}$. The $\mathbb{Z}_p$-valued matrix $\mathbb{L}=\log(\mathbb{M}) \in Mat_N(\mathbb{Z}_p)$ whose $(i,j)$-entry is given by $\mathbb{L}_{ij} = e_i \cdot b_j \in \mathbb{Z}_p$ is a $\mathbb{Z}_p$-valued Log-Hadamard matrix, i.e., the difference of any two distinct 
rows (equivalently columns) of $\mathbb{L}$ is a balanced vector. 
\end{itemize}
\end{theorem}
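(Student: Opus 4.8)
The plan is to establish the chain of equivalences in Theorem~\ref{theorem:basicpicturespectral} by reducing everything to statements about the vanishing of $\widehat{E}$ on certain difference sets, exactly as was done for tiling in Theorem~\ref{theorem:basicpicturetiling}. First I would unwind the definition: a spectral pair $(E,B)$ means the characters $\{\chi(b\cdot(\,))\}_{b\in B}$ form an orthogonal basis of $L^2(E)$. Since we already know $|E|=|B|=N$ from the preamble, it suffices to check orthogonality, i.e. that $\langle \chi(b\cdot(\,)),\chi(b'\cdot(\,))\rangle = \sum_{e\in E}\chi((b-b')\cdot e)=0$ for $b\ne b'$. Recognizing $\sum_{e\in E}\chi((b-b')\cdot e) = |E|\cdot\overline{\widehat{E}(b-b')}$ (up to the normalization constant in the definition of $\widehat{\cdot}$) immediately gives the equivalence of (a), (b), (c), and (d): (c) and (d) are literally the same statement phrased with an individual difference versus the whole set $\mathrm{Dir}(B)$, and (b) adds the (now automatic, by dimension count) spanning statement to the orthogonality relations.

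Next I would handle (e), the Hadamard matrix reformulation. The matrix $\mathbb{M}$ with $\mathbb{M}_{ij}=\chi(e_i\cdot b_j)$ has entries that are $n$th roots of unity by construction, so the only content is $\mathbb{M}^*\mathbb{M}=N\mathbb{I}$. The $(j,j')$ entry of $\mathbb{M}^*\mathbb{M}$ is $\sum_i \overline{\chi(e_i\cdot b_j)}\chi(e_i\cdot b_{j'}) = \sum_i \chi((b_{j'}-b_j)\cdot e_i)$, which is $N$ when $j=j'$ and equals the quantity from (c) otherwise; so $\mathbb{M}^*\mathbb{M}=N\mathbb{I}$ is exactly condition (c). The symmetry remark "$(E,B)$ spectral iff $(B,E)$ spectral" then follows because transposing $\mathbb{M}$ swaps the roles of $E$ and $B$, and for a square matrix $\mathbb{M}^*\mathbb{M}=N\mathbb{I}$ is equivalent to $\mathbb{M}\mathbb{M}^*=N\mathbb{I}$ (a matrix satisfying the first is $\tfrac{1}{\sqrt N}$ times a unitary, hence also satisfies the second). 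Alternatively one notes $\mathrm{Dir}(B)$ and $\mathrm{Dir}(E)$ enter symmetrically once one observes $\widehat{E}$ vanishes on $\mathrm{Dir}(B)$ is equivalent (via the Hadamard symmetry) to $\widehat{B}$ vanishing on $\mathrm{Dir}(E)$.

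For the prime case, conditions (f) and (g) require Lemma~\ref{lemma:main}. For (f): since $\mathrm{DirC}(B)=\bigcup_{r\in\mathbb{Z}_p-\{0\}} r\cdot\mathrm{Dir}(B)$, and Lemma~\ref{lemma:main} tells us $\widehat{E}(m)=0 \iff \widehat{E}(rm)=0$ for all $r\ne 0$, the vanishing of $\widehat{E}$ on $\mathrm{Dir}(B)$ automatically propagates to all nonzero scalar multiples, giving (d) $\iff$ (f). For (g): the matrix $\mathbb{L}$ is well-defined because $p$th roots of unity are in bijection (via the chosen $\chi$) with $\mathbb{Z}_p$, so $\mathbb{L}_{ij}=e_i\cdot b_j$ makes sense, and $\mathbb{L}=\log\mathbb{M}$ in the sense that $\chi(\mathbb{L}_{ij})=\mathbb{M}_{ij}$. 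The difference of columns $j$ and $j'$ of $\mathbb{L}$ is the vector $(e_i\cdot(b_j-b_{j'}))_i \in \mathbb{Z}_p^N$. By Lemma~\ref{lemma:main}, $\widehat{E}(b_j-b_{j'})=0$ is equivalent to $E$ equidistributing over the hyperplanes $\{x : x\cdot(b_j-b_{j'})=t\}$ for $t=0,\dots,p-1$, which says precisely that each value $t\in\mathbb{Z}_p$ is attained by $e_i\cdot(b_j-b_{j'})$ for exactly $N/p$ indices $i$ — i.e. that this difference vector is balanced. So (c) $\iff$ (g). I do not anticipate a serious obstacle here; the one point requiring a little care is the bookkeeping between the normalization factor $\tfrac{1}{n^d}$ in the definition of $\widehat{E}$ and the unnormalized exponential sums that appear in orthogonality computations, and the fact that $|E|/p$ must be an integer for (g) to even be stateable (which is consistent with Theorem~\ref{theorem:main1}(b), though we do not need to invoke it here — the divisibility simply falls out of the balanced-vector condition once a spectral pair exists).
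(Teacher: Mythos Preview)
Your proposal is correct and follows essentially the same route as the paper's proof: both reduce (a)--(d) to the orthogonality relation $\sum_{e\in E}\chi((b-b')\cdot e)=0$, observe that (e) is the matrix reformulation of (c) with the symmetry $(E,B)\leftrightarrow(B,E)$ coming from $\mathbb{M}^*\mathbb{M}=N\mathbb{I}\iff \mathbb{M}\mathbb{M}^*=N\mathbb{I}$, and derive (f) and (g) from Lemma~\ref{lemma:main}. The only minor omission is that in (g) you verify the column-difference statement but not the row-difference one; the paper closes this by invoking the already-established fact that $(B,E)$ is also spectral, so $\widehat{B}(e-e')=0$ and row differences of $\mathbb{L}$ are balanced as well.
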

\begin{proof}
The first part of (b) merely states what it means for the set $\{ \chi(b \cdot ()) | b \in B \}$ to be a basis of $L^2(E)$. The equation 
$\sum_{x \in E} \chi((b-b')x) = 0$ for distinct $b, b' \in B$ is a restatement of the orthogonality of this basis. Thus $(b)$ is clearly equivalent to $(a)$. 
Note that given $|E|=|B|$, the fact that $\{ \chi(b \cdot ()) | b \in B \}$ is an orthogonal basis follows immediately from the orthogonality of these functions. 
This is because the functions $\chi(b \cdot ())$ are never zero and a collection of nonzero orthogonal elements must be linearly independent and hence form 
a basis as the number of elements in this collection is the same as the dimension of the ambient vector space by assumption. 
Now note that orthogonality $\sum_{x \in E} \chi((b-b') \cdot x) = 0$ is equivalent to $\widehat{E}(b-b')=0$ for all $b,b'$ distinct in $B$. Thus (c) is equivalent to (b).
(d) is equivalent to (c) by definition of $\text{Dir}(B)$. The orthogonality of the columns of $\mathbb{M}$ is equivalent to the orthogonality described in $(c)$ and 
so $(e)$ is equivalent to $(c)$ once one notes that the outputs of $\chi$ are always $n$th roots of unity. This column orthogonality is equivalent to the 
equation $\mathbb{M}^* \mathbb{M} = N \mathbb{I}$ which establishes $\frac{1}{N} \mathbb{M}^*$ is $M^{-1}$ and hence implies the equation 
$\mathbb{M}\mathbb{M}^* = N \mathbb{I}$ which then gives row orthogonality. (Thus in general the matrix $M$ will have orthogonal columns if and only if 
it has orthogonal rows). Thus we conclude that $(E,B)$ is a spectral pair if and only if $(B,E)$ is one.

When $n=p$ is a prime, Lemma~\ref{lemma:main} shows that $(f)$ is equivalent to $(d)$. It also shows that $\widehat{E}(b-b')=0$ if and only if the values 
of $e \cdot (b-b')$ equidistributes in $\mathbb{Z}_p$ as $e$ varies over $E$.  This happens if and only if the difference of any two distinct columns of 
$\mathbb{L}$ is a balanced vector. Note that as $(B,E)$ is a spectral pair also, we have $\widehat{B}(e-e')=0$ for distinct $e, e' \in E$ and so we can see the 
difference of two distinct rows of $\mathbb{L}$ is also a balanced vector.
\end{proof}

\begin{corollary}
\label{corr:spectralsize}
Let $E,B \subseteq \mathbb{Z}_n^d$ then:
\begin{itemize}
\item[(a)] When $|E|=|B|=1$, $(E,B)$ is a spectral pair so singleton sets are spectral. 
\item[(b)] When $|E|=|B|=n^d$ then $(E,B)$ is a spectral pair so the whole space $\mathbb{Z}_n^d$ is a spectral set. 
\item[(c)] If $E$ is a spectral set with $|E| > 1$ then $|E|$ is a multiple of $p$. 
\end{itemize}
\end{corollary}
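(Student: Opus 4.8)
The plan is to treat (a) and (b) as direct consequences of the definitions and of elementary finite Fourier analysis, and to obtain (c) by feeding the spectral condition into Lemma~\ref{lemma:main}.

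For (a), if $E=\{e\}$ and $B=\{b\}$ are singletons then $L^2(E)$ is one-dimensional, the single function $\chi(b\cdot(\,))$ is nowhere zero on $E$, hence forms a basis, and the orthogonality requirement in Theorem~\ref{theorem:basicpicturespectral}(b) is vacuous since there is no pair of distinct elements of $B$. For (b), take $E=B=\mathbb{Z}_n^d$; by the equivalence of (a) and (c) in Theorem~\ref{theorem:basicpicturespectral} it suffices to check $\widehat{E}(b-b')=0$ for all distinct $b,b'$, and since $E$ as a characteristic function is the constant function $\mathfrak{1}$ we have $\widehat{E}(m)=\widehat{\mathfrak{1}}(m)=\delta(m)=0$ for every nonzero $m=b-b'$. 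So in both cases the listed pair is spectral.

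The content is in (c). Assume $(E,B)$ is a spectral pair in $\mathbb{Z}_p^d$ (so $n=p$ is prime) with $N:=|E|=|B|>1$. Because $|B|=N>1$ we can pick distinct $b,b'\in B$, and $m:=b-b'$ is a nonzero vector in $\mathbb{Z}_p^d$. By the equivalence of (a) and (c) in Theorem~\ref{theorem:basicpicturespectral}, $\widehat{E}(m)=0$, so condition (1) of Lemma~\ref{lemma:main} holds for this $m$; hence condition (2) holds, i.e.\ $E$ equidistributes over the $p$ parallel hyperplanes $H_{m,t}$, $t=0,1,\dots,p-1$. Equidistribution forces $|E\cap H_{m,t}|=|E|/p$ for each $t$, and as these intersection sizes are nonnegative integers, $p$ must divide $|E|=N$.

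I expect no genuine obstacle: part (c) is a two-step deduction once Lemma~\ref{lemma:main} and Theorem~\ref{theorem:basicpicturespectral} are available. The only points to be careful about are that the hypothesis $|E|=|B|>1$ is exactly what supplies a pair of distinct spectral points to exploit, and that the argument is special to prime fields, since the equidistribution in Lemma~\ref{lemma:main} — and with it the divisibility — relies on a nonzero direction $m$ determining precisely $p$ parallel hyperplanes. (Over $\mathbb{Z}_n$ with $n$ composite this fails, consistent with the paper's theme that the prime case behaves differently.)
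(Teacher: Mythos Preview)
Your proof is correct and essentially the same as the paper's. The only cosmetic difference is in part (c): the paper phrases the divisibility via part (g) of Theorem~\ref{theorem:basicpicturespectral} (the difference of two columns of the log-Hadamard matrix $\mathbb{L}$ is a balanced vector, hence has length divisible by $p$), whereas you invoke part (c) together with Lemma~\ref{lemma:main} to get equidistribution on the $p$ hyperplanes $H_{m,t}$ directly --- but since the ``balanced column difference'' condition is exactly the statement that the values $e\cdot(b-b')$ equidistribute in $\mathbb{Z}_p$, these are the same argument in different vocabulary.
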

\begin{proof}
Case (a) follows as $\chi(b \cdot x)$ is nonzero and hence forms a tautologically orthogonal basis of the $1$-dimensional vector space $L^2(E)$.
Case (b) follows by standard orthogonality of characters of $\mathbb{Z}_n^d$. \\
When $|E| > 1$ let $B$ be the spectrum of $E$ with $|E|=|B|=N$. Then by part (g) of Theorem~\ref{theorem:basicpicturespectral} we have the difference 
of distinct columns of the $N \times N$ dot-product  matrix $\mathbb{L}$ are balanced vectors. Thus if there are two distinct columns (i.e., $N > 1$), we must have 
$N$ is a multiple of $p$ as balanced vectors over $\mathbb{Z}_p$ must have dimension a multiple of $p$. 
\end{proof}

\begin{corollary}
\label{corr:spectral}
Let $E, B \subseteq \mathbb{Z}_n^d$. Suppose $(E,B)$ is a spectral pair then: 
\begin{itemize}
\item[(a)] The pair $(B,E)$ is also a spectral pair. 
\item[(b)] If $\mathbb{A}$ is an invertible $d \times d$ matrix over $\mathbb{Z}_n$ then $(\mathbb{A} \cdot E, \mathbb{A}^{-T} \cdot B)$ is also a spectral pair.
Here $\mathbb{A}^{-T}$ is the inverse transpose matrix of $\mathbb{A}$. 
\item[(c)] Let $n=p$ a prime then $(aE+m_1, bB+m_2)$ is a spectral pair for any $a,b \in \mathbb{Z}_p-\{ 0\}$ and $m_1, m_2 \in \mathbb{Z}_p^d$. 
\end{itemize}
\end{corollary}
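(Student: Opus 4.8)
The plan is to reduce all three statements to the Fourier-vanishing characterizations of Theorem~\ref{theorem:basicpicturespectral} and to push the relevant map through the Fourier transform.

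Part (a) requires essentially no work: it is already recorded at the end of part (e) of Theorem~\ref{theorem:basicpicturespectral}. Concretely, the Butson--Hadamard identity $\mathbb{M}^*\mathbb{M}=N\mathbb{I}$ is equivalent to $\mathbb{M}\mathbb{M}^*=N\mathbb{I}$ (orthogonal columns iff orthogonal rows), and transposing the matrix $\mathbb{M}$ of entries $\chi(e_i\cdot b_j)$ interchanges the roles of $E$ and $B$; equivalently, condition (c) together with $|E|=|B|$ is symmetric in $E$ and $B$. So I would simply cite this.

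For part (b) I would use the equivalence that $(E,B)$ is a spectral pair iff $|E|=|B|$ and $\widehat{E}\equiv 0$ on $\mathrm{Dir}(B)$ (parts (a),(c),(d) of Theorem~\ref{theorem:basicpicturespectral}). The only new ingredient is the change-of-variables formula $\widehat{\mathbb{A}\cdot E}(\vec m)=\widehat{E}(\mathbb{A}^T\vec m)$ for $\mathbb{A}\in GL_d(\mathbb{Z}_n)$, obtained from the defining sum by the substitution $\vec x=\mathbb{A}\vec y$ and the adjoint identity $(\mathbb{A}\vec y)\cdot\vec m=\vec y\cdot(\mathbb{A}^T\vec m)$ (the special case $\mathbb{A}=s\mathbb{I}$ already appears in the proof of Theorem~\ref{theorem:basicpicturetiling}). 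Since $\mathbb{A}$ and $\mathbb{A}^{-T}$ are bijections we get $|\mathbb{A}\cdot E|=|E|=|B|=|\mathbb{A}^{-T}\cdot B|$ and $\mathrm{Dir}(\mathbb{A}^{-T}\cdot B)=\mathbb{A}^{-T}\cdot\mathrm{Dir}(B)$, and for $b_1\neq b_2\in B$,
$$\widehat{\mathbb{A}\cdot E}\bigl(\mathbb{A}^{-T}(b_1-b_2)\bigr)=\widehat{E}\bigl(\mathbb{A}^T\mathbb{A}^{-T}(b_1-b_2)\bigr)=\widehat{E}(b_1-b_2)=0,$$
using $\mathbb{A}^T\mathbb{A}^{-T}=\mathbb{I}$ and the hypothesis. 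Hence $\widehat{\mathbb{A}\cdot E}$ vanishes on $\mathrm{Dir}(\mathbb{A}^{-T}\cdot B)$ and $(\mathbb{A}\cdot E,\mathbb{A}^{-T}\cdot B)$ is spectral.

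For part (c), with $n=p$ prime, Lemma~\ref{lemma:main} supplies the extra fact that $\widehat{E}(\vec m)=0\iff\widehat{E}(r\vec m)=0$ for all $r\in\mathbb{Z}_p-\{0\}$. Because $\mathrm{Dir}(bB)=b\cdot\mathrm{Dir}(B)$ for $b\neq 0$, this gives $\widehat{E}\equiv 0$ on $\mathrm{Dir}(B)\iff\widehat{E}\equiv 0$ on $\mathrm{Dir}(bB)$; with $|bB|=|B|=|E|$ this shows $(E,bB)$ is spectral. Applying the same argument to the symmetric pair $(B,E)$ from part (a) gives $(B,aE)$ spectral, hence $(aE,B)$ spectral, hence (chaining the two scalings) $(aE,bB)$ spectral. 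Translations cost nothing: from the defining sum $\widehat{E+\vec m_1}(\vec v)=\chi(-\vec m_1\cdot\vec v)\,\widehat{E}(\vec v)$, so $Z(\widehat{E+\vec m_1})=Z(\widehat{E})$, while $\mathrm{Dir}(bB+\vec m_2)=\mathrm{Dir}(bB)$ since differences are translation-invariant; thus $(aE+\vec m_1,bB+\vec m_2)$ is a spectral pair. None of this is hard; the two things to watch are placing the transpose correctly in (b) --- one needs the \emph{inverse} transpose precisely so that $\mathbb{A}^T$ cancels it --- and, in (c), obtaining \emph{independent} scalings $a,b$ by routing through the symmetry of part (a), since taking $\mathbb{A}=a\mathbb{I}$ in part (b) only delivers the linked pair $(aE,a^{-1}B)$. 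I expect no genuine obstacle beyond this bookkeeping.
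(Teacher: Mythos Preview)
Your proof is correct and follows essentially the same approach as the paper: part (a) is cited from Theorem~\ref{theorem:basicpicturespectral}(e), and part (c) uses the scaling-invariance of the zero set of $\widehat{E}$ (via Lemma~\ref{lemma:main}) together with the symmetry from (a) to obtain independent scalings, then handles translations, just as the paper does via $\mathrm{DirC}$. The only cosmetic difference is in (b), where the paper observes directly that the Butson--Hadamard matrix is unchanged since $(\mathbb{A}e_i)\cdot(\mathbb{A}^{-T}b_j)=e_i\cdot b_j$, whereas you route the same identity through the change-of-variables formula for $\widehat{\mathbb{A}\cdot E}$; both are immediate.
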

\begin{proof}
Part (a) was proven during the proof of Theorem~\ref{theorem:basicpicturespectral}. To proof part (b), just note that the matrix $\mathbb{H}$ with $(i,j)$ entry 
given by $\chi(e_i \cdot b_j)$ with respect to some ordering of the sets $E, B$ is the same as the matrix with $(i,j)$ entry given by 
$\chi(\mathbb{A}e_i \cdot \mathbb{A}^{-T}b_j)$ Thus the pair $(E,B)$ is spectral if and only if the pair $(A \cdot E, A^{-T} \cdot B)$ is spectral by 
Theorem~\ref{theorem:basicpicturespectral} part (e). 

Now let $n=p$ a prime. If $(E,B)$ is a spectral pair then $\widehat{E}(\text{DirC}(B))=0$ by Theorem~\ref{theorem:basicpicturespectral} part (f). As $\text{DirC}(B)=\text{DirC}(b \cdot B + m_2)$ for any nonzero $b \in \mathbb{Z}_p$ and vector $m_2 \in \mathbb{Z}_p^d$, we conclude $\widehat{E}(\text{DirC}(b \cdot B + m_2)) = 0$ also and so $(E,b \cdot B + m_2)$ is a spectral pair. Using part (a) of this corollary a similar argument can be made for the first slot yielding that $(a \cdot E + m_1, b \cdot B + m_2)$ is a spectral pair for any $a,b \in \mathbb{Z}_p - \{ 0 \}$ and any vectors $m_1, m_2 \in \mathbb{Z}_p^d$.
\end{proof}

Thus just as for tiling sets, the last corollary shows that the property of being a spectral pair is symmetric and stable under independent scalings and translations. 
Furthermore the property of being a spectral set is an affine invariant, i.e., preserved under arbitrary invertible linear transformations and translations.

\subsection{Direction Cones and Projections}

Our final result in this section concerns a gap in the possible sizes of spectral sets $E \subseteq \mathbb{Z}_p^d$. It follows from a result on direction cones of sets in \cite{IMP11} - as the proof in that paper contains some minor typos we provide a self-contained cleaner proof here. For the proof, we will say that a linear map 
$\mathbb{A}: \mathbb{Z}_p^d \to \mathbb{Z}_p^d$ is a projection onto a subspace $V$ if $\mathbb{A}^2 = \mathbb{A}$ and $\mathbb{A} \vec{v} = \vec{v}$ if and only 
if $\vec{v} \in V$.

\begin{theorem}[From \cite{IMP11}] 
\label{theorem:directionconesizes}
Let $E \subseteq \mathbb{Z}_p^d$ then: 
\begin{itemize}
\item[(a)] If $|E| > p^{d-1}$ then $\text{DirC}(E)=\mathbb{Z}_p^d - \{ 0 \}$, i.e., $E$ determines all directions. 
\item[(b)] More generally if $|E| > p^{d-s}$ for some $1 \leq s \leq d$, then there is a (d-s+1)-dimensional subspace $V$ such that 
$\text{DirC}(E)$ projects onto $\text{DirC}(V)$. 
\end{itemize}
\end{theorem}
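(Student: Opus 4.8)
\emph{Proof strategy.} The plan is to prove (a) by a one‑line pigeonhole argument and to derive (b) from (a) by induction on $s$, after first recasting the conclusion of (b) in a more workable form. For (a), fix a nonzero $m\in\mathbb{Z}_p^d$: the $p^{d-1}$ cosets of the line $\langle m\rangle$ partition $\mathbb{Z}_p^d$ into classes of size $p$, so if $|E|>p^{d-1}$ two distinct points $e_1,e_2\in E$ lie in the same coset, whence $e_1-e_2$ is a nonzero multiple of $m$ and, since $\text{DirC}(E)$ is a cone, $m\in\text{DirC}(E)$. As $m$ was arbitrary, $\text{DirC}(E)=\mathbb{Z}_p^d-\{0\}$; note this is exactly the $s=1$ case of (b).

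For (b), write $q_W\colon\mathbb{Z}_p^d\to\mathbb{Z}_p^d/W$ for the quotient by a subspace $W$. I would first reduce (b) to the following statement: \emph{if $|E|>p^{d-s}$ then there is a subspace $W$ with $\dim W=s-1$ such that $q_W(E)$ determines all directions in the $(d-s+1)$-dimensional space $\mathbb{Z}_p^d/W$.} Granting this, choose a complement $V$ of $W$ and let $\mathbb{A}$ be the projection onto $V$ along $W$; then $\mathbb{A}^2=\mathbb{A}$, $\ker\mathbb{A}=W$, $\dim V=d-s+1$, and $q_W$ restricts to an isomorphism $V\to\mathbb{Z}_p^d/W$. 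For $v\in V-\{0\}$ its image $\bar v$ is nonzero in $\mathbb{Z}_p^d/W$, so there are $e_1\ne e_2\in E$ with $q_W(e_1-e_2)$ a nonzero multiple of $\bar v$; since $e_1-e_2$ and $\mathbb{A}(e_1-e_2)$ agree modulo $W$ and $q_W|_V$ is injective, $\mathbb{A}(e_1-e_2)$ is a nonzero multiple of $v$. As $\mathbb{A}(\text{DirC}(E))$ is scaling‑invariant and contained in $V$, this yields $\mathbb{A}(\text{DirC}(E))\supseteq V-\{0\}=\text{DirC}(V)$, i.e. $\text{DirC}(E)$ projects onto $\text{DirC}(V)$.

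It then remains to prove the reduced statement by induction on $s$, the base case $s=1$ (take $W=0$) being precisely part (a). For $s\ge 2$ I would split into two cases. If $E$ already determines all directions in $\mathbb{Z}_p^d$, then for \emph{every} subspace $W$ the image $q_W(E)$ determines all directions: lifting a nonzero $\bar v\in\mathbb{Z}_p^d/W$ to some $v\notin W$, there are $e_1\ne e_2\in E$ with $e_1-e_2$ a nonzero multiple of $v$, hence $q_W(e_1-e_2)$ a nonzero multiple of $\bar v$; so any $W$ of dimension $s-1$ works. Otherwise there is a nonzero $v\notin\text{DirC}(E)$, and since $\text{DirC}(E)$ is a cone the line $\langle v\rangle$ meets $\text{Dir}(E)$ only in $0$, so $q_{\langle v\rangle}$ is injective on $E$. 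Then $F:=q_{\langle v\rangle}(E)\subseteq\mathbb{Z}_p^{d-1}$ has $|F|=|E|>p^{d-s}=p^{(d-1)-(s-1)}$, so the inductive hypothesis (with parameters $d-1$ and $s-1$, where $1\le s-1\le d-1$) gives a subspace $\bar W$ of $\mathbb{Z}_p^d/\langle v\rangle$ of dimension $s-2$ with $q_{\bar W}(F)$ determining all directions; taking $W$ to be the preimage of $\bar W$ gives $\dim W=s-1$, $\mathbb{Z}_p^d/W\cong(\mathbb{Z}_p^d/\langle v\rangle)/\bar W$, and $q_W(E)$ corresponds to $q_{\bar W}(F)$, completing the induction.

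The one genuinely delicate point is this case split. The naive hope — that one can always quotient by some line while keeping the image of size $>p^{d-s}$, then invoke (a) after $s-1$ such steps — is false: when $E$ determines every direction, every line‑quotient loses at least one point, which for $|E|$ just above $p^{d-s}$ is fatal. The resolution is the observation that this bad case is exactly the case in which the conclusion is trivial (any $W$ works), so no size bookkeeping is needed there. (An essentially equivalent argument avoids the induction: take $W$ of dimension $s-1$ maximizing $|q_W(E)|$; if $q_W(E)$ missed a direction $\bar v$, then the preimage $U$ of $\langle\bar v\rangle$ would have $|q_U(E)|=|q_W(E)|$, and maximality forces $q_U$ — hence $q_W$ — to be injective on $E$, so $|q_W(E)|=|E|>p^{(d-s+1)-1}$ and (a) applied in $\mathbb{Z}_p^d/W$ gives a contradiction.) I expect the bulk of the writing effort to go into carefully checking the two bookkeeping reductions — the passage from "$q_W(E)$ determines all directions" to "$\text{DirC}(E)$ projects onto $\text{DirC}(V)$", and the identification $\mathbb{Z}_p^d/W\cong(\mathbb{Z}_p^d/\langle v\rangle)/\bar W$ under the inductive step — rather than into any hard combinatorics.
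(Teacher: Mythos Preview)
Your proof is correct and follows essentially the same approach as the paper's: both arguments repeatedly project (you quotient, the paper projects onto complements --- equivalent via choosing a splitting) along a missed direction, keeping the image of $E$ the same size, until all directions are determined, and then use the size bound to see that the ambient space at that stage has dimension at least $d-s+1$. The only cosmetic differences are that the paper phrases this as building a descending chain of subspaces iteratively while you phrase it as an induction on $s$ with a case split, and your proof of (a) is the direct pigeonhole version of the paper's contrapositive; your parenthetical ``maximizing $|q_W(E)|$'' alternative, however, is not quite complete as stated (maximality is taken over $(s-1)$-dimensional $W$ but $U$ has dimension $s$, so the inference ``maximality forces $q_U$ injective'' needs another line), though this does not affect your main argument.
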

\begin{proof}
We first prove $(a)$ by contraposition. Suppose $E$ misses a direction. Then there is a line $\ell$ through the origin such that 
$\ell \cap \text{DirC}(E) = \emptyset$. Consider a linear projection $\mathbb{A}$ onto a hyperplane $H$ complementary to $\ell$, with kernel the line $\ell$. It follows that for distinct elements 
$e, e' \in E, e-e' \notin \ell$ and so $\mathbb{A}e \neq \mathbb{A}e' \in H$. Thus $\mathbb{A}$ bijects $E$ with a subset of $H$ and so 
$|E| \leq |H| = p^{d-1}$ and we are done.

Now let us prove part (b). Suppose we have constructed a chain of nested subspaces $\mathbb{Z}_p^d = V_0 \supseteq V_1 \supseteq V_2 \dots \supseteq V_k$
where $V_k$ has codimension $k$ in $\mathbb{Z}_p^d$ and where $\text{DirC}(E)$ does not project onto $\text{DirC}(V_j)$ under a projection to $V_j$ for 
$0 \leq j \leq k$ and furthermore $E' = \mathbb{A} E$ is a bijective image of $E$ under a projection onto $V_k$. Then by assumption $\text{DirC}(E')$ is not all of 
the (punctured) space $V_k$ and so misses a line $\ell$ in $V_k$. We can then project $V_k$ to a subspace $V_{k+1}$ of codimension $1$ in $V_k$ (and hence codimension $k+1$ in $\mathbb{Z}_p^d$) under a 
projection $B$ with kernel $\ell$. The map $B$ then takes $E'$ bijectively to a set $E'' \subseteq V_{k+1}$ as $\text{Dir}(E) \cap \ell = \emptyset$ and hence we have constructed an extension of the original chain.

This process must eventually terminate within $d$ steps and so there must exist a first point in this chain  
say $V_m$ where $\text{DirC}(E)$ maps onto the (punctured taking the origin out) space $V_m$. As $E$ bijects with a subspace of $V_m$ by construction, we have 
$p^{d-s} < |E| \leq p^{d-m}$ and so $d-m > d-s$ and so $d-m \geq d-s+1$. Taking a (d-s+1) dimensional subspace of $V_m$ as $V$ yields a $(d-s+1)$-dimensional subspace such that $\text{DirC}(E)$ projects onto $\text{DirC}(V)=V-\{ 0 \}$ and hence gives (b).

\end{proof}

This yields a corollary which says that there are no spectral sets of sizes strictly between $p^{d-1}$ and $p^d$ in $\mathbb{Z}_p^d$.

\begin{corollary}
\label{corollary:spectralgaphighend}
Let $(E,A) \subseteq \mathbb{Z}_p^d$ be a spectral pair. Then $|E| < p^d$ implies $|E| \leq p^{d-1}$.
\end{corollary}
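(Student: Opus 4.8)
The plan is to derive Corollary~\ref{corollary:spectralgaphighend} as a quick consequence of Theorem~\ref{theorem:directionconesizes}(a) together with the characterization of spectral pairs in terms of direction cones, namely Theorem~\ref{theorem:basicpicturespectral}(f), which says that $(E,B)$ is a spectral pair over $\mathbb{Z}_p$ if and only if $\widehat{E}$ vanishes on $\text{DirC}(B)$. The key observation is that this vanishing condition forces $\text{DirC}(B)$ to be a proper subset of $\mathbb{Z}_p^d - \{\vec 0\}$ whenever $E$ is not the whole space, because $\widehat{E}$ cannot vanish on all nonzero frequencies unless $E$ is a single point — more precisely, if $\widehat{E}(m) = 0$ for all $m \neq 0$, then $E$ (as a characteristic function) equals a constant, forcing $|E| = p^d$ or $|E| = 0$.

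First I would set up the argument by supposing $(E,B)$ is a spectral pair with $|E| = |B| = N$ and $p^{d-1} < N < p^d$; the goal is a contradiction. By Corollary~\ref{corr:spectralsize}, such an $E$ has $|E| = |B| > 1$, so $B$ has at least two points and $\text{DirC}(B)$ is nonempty. Since $|B| = N > p^{d-1}$, Theorem~\ref{theorem:directionconesizes}(a) applies to $B$ and gives $\text{DirC}(B) = \mathbb{Z}_p^d - \{\vec 0\}$; that is, $B$ determines every direction. Then the spectral condition Theorem~\ref{theorem:basicpicturespectral}(f), $\widehat{E}(\text{DirC}(B)) = 0$, becomes $\widehat{E}(m) = 0$ for every nonzero $m \in \mathbb{Z}_p^d$.

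Next I would observe that $\widehat{E}(m) = 0$ for all $m \neq 0$ together with $\widehat{E}(0) = |E|/p^d$ means, by Fourier inversion, that the characteristic function of $E$ is the constant function with value $|E|/p^d$ on all of $\mathbb{Z}_p^d$. Since a characteristic function only takes the values $0$ and $1$, this forces $|E|/p^d \in \{0,1\}$, i.e.\ $|E| = 0$ or $|E| = p^d$, contradicting $p^{d-1} < |E| < p^d$. Hence no spectral set of size strictly between $p^{d-1}$ and $p^d$ exists, which is exactly the statement that $|E| < p^d$ implies $|E| \le p^{d-1}$.

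I do not anticipate a serious obstacle here: the entire content is packaged in Theorem~\ref{theorem:directionconesizes}(a), and the rest is the elementary fact that a $\{0,1\}$-valued function with vanishing nonzero Fourier coefficients is constant. The only point requiring a little care is making sure the hypothesis $|B| > p^{d-1}$ is legitimately available — it is, since $|B| = |E|$ for a spectral pair — and confirming that $B$ genuinely has a nonempty direction cone (guaranteed by $|E| > 1$ via Corollary~\ref{corr:spectralsize}) so that Theorem~\ref{theorem:directionconesizes}(a) is being applied in a nonvacuous way. Alternatively one could phrase the last step directly via Lemma~\ref{lemma:main}: $\widehat{E}(m)=0$ for all nonzero $m$ says $E$ equidistributes on every family of parallel hyperplanes in every direction, which a short counting argument shows is impossible for a proper nonempty subset; but the Fourier-inversion phrasing is cleaner.
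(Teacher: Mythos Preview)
Your proposal is correct and follows essentially the same argument as the paper: both apply Theorem~\ref{theorem:directionconesizes}(a) to the spectrum (using $|A|=|E|>p^{d-1}$) to get $\text{DirC}(A)=\mathbb{Z}_p^d-\{0\}$, invoke the spectral condition $\widehat{E}(\text{DirC}(A))=0$, and conclude that the characteristic function of $E$ is constant, contradicting $0<|E|<p^d$. Your version is slightly more explicit about the Fourier-inversion step and the nonemptiness of the direction cone, but the route is the same.
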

\begin{proof}
Suppose $|E| < p^d$ i.e., $E$ is not the whole space.
By Theorem~\ref{theorem:basicpicturespectral}, we have $\widehat{E}(\text{DirC}(A))=0$.
Thus by Theorem~\ref{theorem:directionconesizes}, we conclude that if $|A| > p^{d-1}$ then $\text{DirC}(A)=\mathbb{Z}_p^d - \{ 0 \}$ and hence 
$\widehat{E}(m)=0$ for all nonzero $m$. This implies the characteristic function of $E$ is a constant function so $E$ is either the empty set or the whole space 
both contrary to our hypothesis. Thus $|A|=|E| \leq p^{d-1}$ and we are done.

\end{proof}

We warn the reader that in Theorem~\ref{theorem:directionconesizes}, one cannot replace the condition that $\text{DirC}(E)$ projects onto $\text{DirC}(V)$ for a subspace 
$V$ with the condition that it contains $\text{DirC}(V)$ as the two concepts are quite different in general as the following example shows:

\begin{example}[Random Cones]
Let $p$ be a prime and let us generate random cones in $\mathbb{Z}_p^d$ with the following process: Fix a real number $0 < \alpha < 1$ and 
let $C$ be the random cone obtained by letting each line through the origin be in $C$ with probability $\alpha$ independently of whether other lines are in $C$ or not.
Note that any given cone can be the outcome of this random binomial process though cones where the proportion of lines in $C$ is close to $\alpha$ are most likely. 

Then if $d \geq 3$, almost surely as $p \to \infty$, a random cone $C$ has the property that it linearly projects {\bf onto} $H - \{ \vec{0} \}$ for every hyperplane $H$ and hence onto the direction cone 
of any proper subspace of $\mathbb{Z}_p^d$ {\bf and } also has the property that $C$ almost surely does not contain $V - \{ 0 \}$ for any subspace $V$ of dimension $\geq 2$.
\end{example}
\begin{proof}
Given a subspace $V$ of dimension two, $V-\{0 \}$ consists of the union of $\frac{p^{2}-1}{p-1}=p+1$ disjoint (punctured by taking origin out) lines through the origin.
The chance that the random cone $C$ contains $V-\{ 0 \}$ is hence $\alpha^{p+1}$. As the number of $2$-dimensional subspaces in $\mathbb{Z}_p^d$ when 
$d \geq 3$ is $\frac{(p^d-1)(p^d-p)}{(p^2-1)(p^2-p)}$ we see that an upper bound on the probability that the random cone $C$ contains some (punctured) 2-dimensional subspace is $\frac{(p^d-1)(p^d-p)}{(p^2-1)(p^2-p)}\alpha^{p+1}$ which tends to zero as $p \to \infty$ (and $d$ fixed). Thus almost surely as 
$p \to \infty$, a random cone contains no (punctured) subspace of dimension $\geq 2$ as claimed.

Now let $H$ be a hyperplane through the origin and $\vec{v} \in H - \{ 0 \}$. Let $\pi: \mathbb{Z}_p^d \to H$ be a linear projection onto the hyperplane $H$.
Now $\pi^{-1}(v)$ is a line which does not go through the origin and hence intersects exactly $p$ lines through the origin. Thus the probability that 
$\pi(C)$ does not contain $v$ is equal to $(1-\alpha)^p$. It follows that the probability that $\pi(C) \neq H-\{ 0\}$ is bounded above by $(p^{d-1}-1)(1-\alpha)^p$. 
As there are $\frac{p^d-1}{p-1}$ hyperplanes in $\mathbb{Z}_p^d$, the probability that there is some hyperplane $H$ such that $C$ does not project onto $H-\{0\}$ under some linear projection is bounded above by $\frac{p^d-1}{p-1} (p^{d-1}-1)(1-\alpha)^p$ which tends to zero as $p \to \infty$. Thus with probability $1$ 
as $p \to \infty$, a random cone will linearly project {\bf onto every} hyperplane. Combined with the previous paragraph, this finishes the proof that the example works as claimed.

\end{proof}

\subsection{Extracting spectral pairs in a given dimension from log-Hadamard matrices of suitable rank}

In this subsection, let $n=p$ a prime. We will prove an equivalence between the existence of a spectral pair $(E,B)$ in $\mathbb{Z}_p^d$ with $|E|=|B|=m$ and 
the existence of a $m \times m$ log-Hadamard matrix over $\mathbb{Z}_p$ of rank less than or equal to $d$. 

\begin{theorem}
\label{theorem:logHadamardSpectralEquivalence}
Let $p$ be a prime. Then the existence of a spectral pair $(E,B)$ in $\mathbb{Z}_p^d$ with $|E|=|B|=m$ is equivalent to 
the existence of a $m \times m$ log-Hadamard matrix with entries in $\mathbb{Z}_p$ with rank less than or equal to $d$.
\end{theorem}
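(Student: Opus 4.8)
The plan is to prove the equivalence by extracting, from a spectral pair, a log-Hadamard matrix whose rank is controlled by the ambient dimension, and conversely building a spectral pair in $\mathbb{Z}_p^d$ out of a low-rank log-Hadamard matrix. Throughout I will use part (g) of Theorem~\ref{theorem:basicpicturespectral}, which already tells us that if $(E,B)$ is a spectral pair in $\mathbb{Z}_p^d$ with $|E|=|B|=m$, then the $m \times m$ matrix $\mathbb{L}$ with $\mathbb{L}_{ij} = e_i \cdot b_j$ is log-Hadamard over $\mathbb{Z}_p$. So the forward direction reduces to a rank bound: I claim $\operatorname{rank}(\mathbb{L}) \le d$. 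Indeed, if we form the $m \times d$ matrix $\mathbb{E}$ whose rows are the vectors $e_1, \dots, e_m \in \mathbb{Z}_p^d$ and the $m \times d$ matrix $\mathbb{B}$ whose rows are $b_1, \dots, b_m$, then $\mathbb{L} = \mathbb{E}\,\mathbb{B}^T$, a product of an $m\times d$ by a $d \times m$ matrix, hence has rank at most $d$. This gives the forward implication.

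For the converse, suppose $\mathbb{L}$ is an $m \times m$ log-Hadamard matrix over $\mathbb{Z}_p$ with $r := \operatorname{rank}(\mathbb{L}) \le d$. Over the field $\mathbb{Z}_p$ I can factor $\mathbb{L} = \mathbb{E}\,\mathbb{B}^T$ where $\mathbb{E}$ is $m \times r$ and $\mathbb{B}$ is $m \times r$ (standard rank factorization: take $\mathbb{E}$ to have columns forming a basis of the column space of $\mathbb{L}$, and read off $\mathbb{B}^T$ as the coordinates). Padding with zero columns, I may regard $\mathbb{E}$ and $\mathbb{B}$ as $m \times d$ matrices, still with $\mathbb{L} = \mathbb{E}\,\mathbb{B}^T$. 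Let $E$ be the multiset of rows of $\mathbb{E}$ and $B$ the multiset of rows of $\mathbb{B}$, so that $\mathbb{L}_{ij} = e_i \cdot b_j$ exactly as in part (g). Since $\mathbb{L}$ is log-Hadamard, the difference of any two distinct columns of $\mathbb{L}$ is balanced, i.e. $\widehat{E}(b_j - b_{j'}) = 0$ for $j \ne j'$ by Lemma~\ref{lemma:main}; likewise differences of distinct rows are balanced, giving $\widehat{B}(e_i - e_{i'}) = 0$ for $i \ne i'$. The main point I still need to check is that $E$ and $B$ are genuine sets of size $m$ (not multisets with repeats): if $e_i = e_{i'}$ for some $i \ne i'$, then rows $i$ and $i'$ of $\mathbb{L}$ coincide, so their difference is the zero vector, which is not balanced — contradicting the log-Hadamard hypothesis. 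The same argument on columns shows $b_j \ne b_{j'}$ for $j \ne j'$. Hence $|E| = |B| = m$, and then $\widehat{E}(\operatorname{Dir}(B)) = 0$ is precisely condition (c)/(d) of Theorem~\ref{theorem:basicpicturespectral}, so $(E,B)$ is a spectral pair in $\mathbb{Z}_p^d$.

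The step I expect to be the only genuine subtlety is making sure the rank factorization is done \emph{over} $\mathbb{Z}_p$ and that the dot-product matrix it produces really is the original $\mathbb{L}$ with the \emph{same} field entries — there is no characteristic-zero lift involved, everything stays in $\mathbb{Z}_p$, so the identity $\mathbb{L}_{ij} = e_i \cdot b_j$ holds in $\mathbb{Z}_p$ and feeds directly into part (g). A secondary point worth a sentence is that balancedness is preserved under the correspondence between $\widehat{E}$ vanishing and column-difference balance (this is exactly Lemma~\ref{lemma:main} applied to the nonzero vector $b_j - b_{j'}$, which is legitimate precisely because that vector is nonzero, as established by the no-repeats argument). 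Everything else is bookkeeping: the zero-padding to go from rank $r$ up to dimension $d$ changes none of the dot products, and symmetry in $E$ and $B$ is automatic since $\mathbb{L}^T$ is log-Hadamard whenever $\mathbb{L}$ is. I would therefore present the proof as: (1) spectral pair $\Rightarrow$ log-Hadamard via (g), with rank $\le d$ via the factorization $\mathbb{L} = \mathbb{E}\mathbb{B}^T$; (2) log-Hadamard of rank $\le d$ $\Rightarrow$ rank factorization over $\mathbb{Z}_p$, pad to dimension $d$, check no repeated rows/columns using that the zero vector is unbalanced, conclude spectral pair via (d)/(g).
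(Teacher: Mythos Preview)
Your proof is correct and follows essentially the same approach as the paper: both directions use the factorization $\mathbb{L} = \mathbb{E}\,\mathbb{B}^T$, with the forward direction bounding the rank by $d$ via this product and the converse obtaining the factorization from a rank decomposition over $\mathbb{Z}_p$, then checking that rows (and columns) are distinct because the zero vector is not balanced. The paper phrases the rank factorization as factoring the linear map $\mathbb{L}$ through its image and the zero-padding as an inclusion $\mathbb{Z}_p^k \hookrightarrow \mathbb{Z}_p^d$, but these are exactly your steps in slightly different language.
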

\begin{proof}
If $E, B \subseteq \mathbb{Z}_p^d$ with $|E|=|B|=m$, let us order the sets $E=\{ e_1, \dots, e_m \}$, $B=\{ b_1, \dots, b_m \}$ arbitrarily. Then we can associate matrices $\mathbb{E}$ and $\mathbb{B}$ to these sets where the $i$th row of $\mathbb{E}$ (respectively $\mathbb{B}$) is $e_i$ (respectively $b_i$).  Thus 
$\mathbb{E}$ and $\mathbb{B}$ are $m \times d$ matrices and hence have rank less than or equal to $d$. 
Let $\mathbb{L} = \mathbb{E} \mathbb{B}^T$ be the corresponding matrix of dot products of vectors of $E$ and $B$ which will be a $m \times m$ 
matrix with entries in $\mathbb{Z}_p$.  It has rank less than or equal to $d$ as the rank of a product of matrices is no more than the minimum of the ranks of the factors.
Finally if $(E,B)$ is a spectral pair, by Theorem~\ref{theorem:basicpicturespectral}, $\mathbb{L}$ is a log-Hadamard matrix. 
Thus the existence of a spectral pair $(E,B)$ in $\mathbb{Z}_p^d$ with $|E|=|B|=m$ has been shown to imply the existence of a 
$m \times m$ log-Hadamard matrix $\mathbb{L}$ with entries in $\mathbb{Z}_p$ and rank less than or equal to $d$. 

It remains to prove the converse so assume that we are given a $m \times m$ log-Hadamard matrix $\mathbb{L}$ of rank less than or equal to $d$.
Using the standard basis of $\mathbb{Z}_p^m, \mathbb{L}$ gives rise to a linear operator $\mathbb{L}: \mathbb{Z}_p^m \to \mathbb{Z}_p^m$ 
whose image is at most $d$ dimensional. This operator can be factored as the composition of two linear operators 
$\mathbb{L}: \mathbb{Z}_p^m \to Im(L)$ and $J: Im(L) \to \mathbb{Z}_p^m$ where $J$ is just an inclusion. Choosing a basis for $Im(L)$, these operators can 
be represented by $k \times m$ matrix $\mathbb{C}$ and $m \times k$ matrix $\mathbb{E}$ with $k=\dim(Im(L)) \leq d$. 
Thus $\mathbb{L}=\mathbb{E} \mathbb{C}$. Setting $\mathbb{B}=\mathbb{C}^T$ we get $\mathbb{L} = \mathbb{E} \cdot \mathbb{B}^T$ for 
$m \times k$ matrices $\mathbb{E}$ and $\mathbb{B}$. The row vectors of $\mathbb{E}$ must be distinct as if two rows of $\mathbb{E}$ are the same 
then the corresponding rows of $\mathbb{L}=\mathbb{E} \mathbb{B}^T$ are the same contradicting that $\mathbb{L}$ is log-Hadamard (the difference of distinct rows must be balanced and the zero vector is not balanced). Similarly distinct rows of $\mathbb{B}$ cannot be the same or two distinct columns of $\mathbb{L}$ would equal which would contradict $\mathbb{L}$ being log-Hadamard. Thus we may define sets $\mathbb{E}, \mathbb{B} \subseteq \mathbb{Z}_p^k$ 
with $|E|=|B|=m$ by letting $E$ be the set containing the row vectors of $\mathbb{E}$ and $B$ be the set containing the row vectors of $\mathbb{B}$. 
As $\mathbb{L}=\mathbb{E}\mathbb{B}^T$ is log-Hadamard, Theorem~\ref{theorem:basicpicturespectral} implies that $(E,B)$ is a spectral pair in 
$\mathbb{Z}_p^k$. Finally as $k \leq d$ we may view $\mathbb{Z}_p^k \subseteq \mathbb{Z}_p^d$ via an inclusion whose image are the vectors with 
zero in the last $d-k$ slots. This identifies $(E,B)$ as a spectral pair in $\mathbb{Z}_p^d$ (the pair is still spectral as its dot product matrix is unchanged) 
with $|E|=|B|=m$ and the proof is hence complete.

\end{proof}

An explicit way to construct the spectral pair $(E,B)$ in $\bb{Z}_p^d$ with $|E|=|B|=m$ given an $m \times m$ log-Hadamard matrix $\bb{L}$ with entries in $\bb{Z}_p$ of rank equal to $d$ is as follows.

Let $r_1,\dots,r_d$ be $d$ linearly independent rows of $\bb{L}$ (each $r_i \in \bb{Z}_p^m$). Every row of $\bb{L}$ is a linear combination of the $r_j$ so there exist $e_{ij} \in \bb{Z}_p$ such that for all $i=1,\dots,m$
\[
(\text{$i$th row of } \bb{L}) = \sum_{j=1}^d e_{ij} r_j .
\] 

Now let $\bb{E}$ be the $m \times d$ matrix whose $(i,j)$-entry is $e_{ij}$ and $\bb{B}$ be the $m \times d$ matrix whose $i$th column (and not row) is $r_i$. The choice of the $e_{ij}$ ensures that $\bb{L} = \bb{E} \bb{B}^T$.

Finally, $E$ is taken to consist of the $m$ distinct rows of $\bb{E}$ and $B$ to consist of the $m$ distinct rows of $\bb{B}$.

\section{More on log-Hadamard matrices and balanced vectors}

We present some results on log-Hadamard matrices and balanced vectors that are of a more theoretical nature. 

\subsection{Equivalence of log-Hadamard matrices}

Fix $p$ a prime. Given a balanced $\mathbb{Z}_p$-vector $\vec{v}$, adding a multiple of the all $1$ vector $\mathfrak{1}$ to $\vec{v}$ preserves the balanced property. Thus given a log-Hadamard matrix $\mathbb{L}$, we may add (different) multiples of $\mathfrak{1}$ to the rows (and columns) of $\mathbb{L}$ to get a 
new log-Hadamard matrix $\mathbb{L}$. We may also permute rows or columns and the log-Hadamard property is preserved.

\begin{definition}
We say that two $m \times m$ log-Hadamard matrices $\mathbb{L}, \mathbb{L}'$ over $\mathbb{Z}_p$ are equivalent if one can be obtained from the other through a sequence of row permutations, column permutations and additions of multiples of $\mathfrak{1}$ to rows (and columns).
\end{definition}

Thus every log-Hadamard matrix is equivalent to a {\bf dephased} log-Hadamard matrix, i.e., one whose leftmost column and topmost row consists of all zeros.
Note in a dephased log-Hadamard matrix, all rows besides the top row consist of balanced vectors and all columns besides the leftmost one consist of balanced vectors. The reader can check that this notion of equivalence is equivalent to the usual notion for Hadamard matrices in the literature.

Note given a spectral pair $(E,B) \in \mathbb{Z}_p^d$, we may translate $E$ and $B$ separately to get a new spectral pair 
$(E',B') \in \mathbb{Z}_p^d$ with $\vec{0} \in E \cap B$. Ordering these sets so that $\vec{0}$ is listed first, the corresponding dot product matrix 
$\mathbb{L} = \mathbb{E} \mathbb{B}^T$ is a dephased log-Hadamard matrix. 

Thus we get this immediate corollary of Theorem~\ref{theorem:logHadamardSpectralEquivalence}:

\begin{corollary}
\label{corollary:dephasedrank}
Fix $p$ a prime. The following are equivalent:
\begin{itemize}
\item[(1)] There exists a $m \times m$ log-Hadamard matrix $\mathbb{L}$ of rank less than or equal to $d$ over $\mathbb{Z}_p$. 
\item[(2)] There exists a spectral pair $(E,B)$ in $\mathbb{Z}_p^d$ with $|E|=|B|=m$. 
\item[(3)] There exists a spectral pair $(E,B)$ in $\mathbb{Z}_p^d$ with $|E|=|B|=m$ and $\vec{0} \in E \cap B$. 
\item[(4)] There exists a $m \times m$ {\bf dephased} log-Hadamard matrix $\mathbb{L}$ of rank less than or equal to $d$ over $\mathbb{Z}_p$. 
\end{itemize}
\end{corollary}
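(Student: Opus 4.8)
The plan is to derive Corollary~\ref{corollary:dephasedrank} as a short chain of implications, leaning entirely on Theorem~\ref{theorem:logHadamardSpectralEquivalence} and the discussion of equivalence and dephasing that precedes the statement. First I would note $(1) \Leftrightarrow (2)$: this is literally Theorem~\ref{theorem:logHadamardSpectralEquivalence}, which asserts that the existence of an $m \times m$ log-Hadamard matrix over $\mathbb{Z}_p$ of rank $\leq d$ is equivalent to the existence of a spectral pair $(E,B)$ in $\mathbb{Z}_p^d$ with $|E|=|B|=m$, so nothing new is needed there.

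Next I would establish $(2) \Leftrightarrow (3)$. The implication $(3) \Rightarrow (2)$ is trivial since $(3)$ is a special case of $(2)$. For $(2) \Rightarrow (3)$, given a spectral pair $(E,B)$ I would invoke Corollary~\ref{corr:spectral}(c) (valid since $n=p$ is prime): translating $E$ by $-e_0$ for some $e_0 \in E$ and $B$ by $-b_0$ for some $b_0 \in B$ produces a spectral pair $(E',B')$ with $|E'|=|B'|=m$ and $\vec{0} \in E' \cap B'$. This is exactly the remark made just before the corollary statement.

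Then I would handle $(3) \Leftrightarrow (4)$. Again $(4) \Rightarrow (1)$ (hence, via the chain, $(4) \Rightarrow (3)$) is immediate because a dephased log-Hadamard matrix is in particular a log-Hadamard matrix. For $(3) \Rightarrow (4)$: given a spectral pair $(E,B)$ with $\vec{0} \in E \cap B$, order $E = \{e_1,\dots,e_m\}$ and $B=\{b_1,\dots,b_m\}$ so that $e_1 = b_1 = \vec{0}$, and form $\mathbb{L} = \mathbb{E}\mathbb{B}^T$ as in Theorem~\ref{theorem:logHadamardSpectralEquivalence}. By Theorem~\ref{theorem:basicpicturespectral} this $\mathbb{L}$ is log-Hadamard, its rank is $\leq d$ since $\mathbb{E}, \mathbb{B}$ are $m \times d$, and because $e_1 \cdot b_j = 0$ for all $j$ and $e_i \cdot b_1 = 0$ for all $i$, its first row and first column are zero; that is, $\mathbb{L}$ is dephased. (Alternatively I could start from any log-Hadamard matrix of rank $\leq d$ and apply the equivalence operations of adding multiples of $\mathfrak{1}$ to clear the first row and column, noting that these operations preserve rank because they amount to adding outer products $\mathfrak{1}\vec{c}^T$ and $\vec{r}\mathfrak{1}^T$ whose effect on rank is controlled—but routing through the spectral pair is cleaner.)

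The only point requiring any care, and thus the main obstacle, is the rank bookkeeping in the dephasing step: one must confirm that passing to a dephased representative does not increase the rank beyond $d$. Routing the argument through part (3)—that is, first producing a genuine spectral pair with $\vec 0$ in both sets via translation, and only then forming $\mathbb{E}\mathbb{B}^T$—sidesteps this, since $\mathbb{E}$ and $\mathbb{B}$ are honestly $m \times d$ and the rank bound $\operatorname{rank}(\mathbb{E}\mathbb{B}^T) \le \min(\operatorname{rank}\mathbb{E}, \operatorname{rank}\mathbb{B}) \le d$ is automatic. Everything else is a restatement of results already proved, so the corollary follows immediately.
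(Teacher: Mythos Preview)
Your proposal is correct and follows essentially the same route as the paper: the paper states this as an immediate corollary of Theorem~\ref{theorem:logHadamardSpectralEquivalence}, with the preceding paragraph supplying exactly your $(2)\Rightarrow(3)$ via translation and your $(3)\Rightarrow(4)$ via ordering $\vec{0}$ first and forming $\mathbb{L}=\mathbb{E}\mathbb{B}^T$. Your remark that routing the dephasing through the spectral pair cleanly controls the rank is a nice explicit observation that the paper leaves implicit.
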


Note $1 \times 1$ matrices are trivially log-Hadamard but the next smallest possible size is $p \times p$ as the difference of distinct rows must be balanced and 
hence in particular have dimension a multiple of $p$. A $p \times p$ log-Hadamard matrix is equivalent to a {\bf unique} matrix of the form 
$$
\mathbb{L} = \begin{bmatrix} 0 & 0 & 0 & \dots & 0 \\ 0 & 1 & 2 & \dots & p-1 \\ 0 & 2 & * & \dots  & * \\ \vdots & \vdots & \vdots & \vdots & \vdots \\
0 & p-1 & * & \dots & * \end{bmatrix}
$$
i.e. with zero top row and left column and where the next row and column is given by the balanced vector $(0,1,2,\dots,p-1)$ (or its transpose). 
We shall call this the {\bf special dephased} form of a $p \times p$ log-Hadamard matrix.

It is convenient to use the convention that the leftmost column and top row are called the $0$th column and $0$th row instead of the usual convention of calling it the $1$st. With this convention the $p \times p$ matrix $[ij]$ whose $(i,j)$-entry is $ij$, $0 \leq i,j \leq p-1$ is a special dephased log-Hadamard matrix. 

It is not hard to show that $[ij]$ is the only special dephased log-Hadamard matrix when $p=3,5$ but for higher $p$ there exist other examples. Note that in a special dephased $p \times p$ log-Hadamard matrix, all rows past the $0$th and $1$st row correspond to bijections $\psi: \mathbb{Z}_p \to \mathbb{Z}_p$ such that 
$\psi(0)=0$ and $x \to \psi(x) - x$ is also a bijection of $\mathbb{Z}_p$. Affine bijections of the form $\Psi(j)=ij$ for a constant $i \in \mathbb{Z}_p - \{ 0, 1 \}$ are examples of such and correspond to the matrix $[ij]$. For $p=3,5$ these are the only such bijections but for $p \geq 7$ there exist other examples as the reader can computationally verify.

In general an $mp \times mp$ log-Hadamard matrix will be said to be in special dephased form if it is in the form where its $0$th row and column are the zero vector 
and its $1$st row and column are the vector $(0,1,2, \dots, p-1, 0,1,2 \dots, p-1, \dots, 0, 1, 2, \dots, p-1)$ (or its transpose). Any $mp \times mp$ log-Hadamard matrix is equivalent to such a matrix.

It is useful to consider the structure of the $2$nd row in such a special dephased log-Hadamard matrix. We introduce variables to codify the choices for this row.

Let $x_{ij}$ be the number of $i$'s in the 2nd row below a $j$ in the $1$st row where $0 \leq i,j \leq p-1$. We put these in a $p \times p$ matrix called  
$\mathbb{X}$ whose $(i,j)$-entry is $x_{ij}$. The reader may verify that as the $2$nd row is balanced, the column sums of this matrix are all $m$ 
and that as the $1$st row is balanced, the row sums of this matrix are all $m$ also. Finally as the difference of the $2$nd row and $1$st row is balanced, 
all the ``diagonal'' sums $\sum_{i=0}^{p-1} x_{i,i+s}$ are also equal to $m$. The reader may also verify that given any matrix with nonnegative integer entries, 
with these constraints one can construct a suitable $2$nd row (unique up to permutations amongst slots in the $1$st row with the same entry).
Similar comments apply to any row after the 2nd one also.

Thus in the construction of special dephased $mp \times mp$ log-Hadamard matrices, we are lead to an important class of matrices which we define now:

\begin{definition}
Fix a prime $p$. A $p \times p$, matrix $\mathbb{X}$ with nonnegative integer entries whose row sums, column sums and diagonal sums $\sum_{i} \mathbb{X}_{i,i+s}$ for any $0 \leq s \leq p-1$ all add up to $m$ is called a Davey matrix of weight $m$.  (Here we take the row and column indices of $\mathbb{X}$ to take values in $\mathbb{Z}_p$.) The set of such matrices will be denoted by $D_m$. The sum of a matrix in $D_m$ with one in $D_n$ is in $D_{m+n}$ so 
$D = \cup_{m=0}^{\infty} D_m$ is an Abelian monoid under matrix addition called the Davey monoid. The Grothendieck construction on $D$ yields an Abelian group 
called $\widehat{D}$, the Davey group. To emphasize the dependence on the prime $p$, we sometimes will write $D(p)$ and $\widehat{D}(p)$.
\end{definition}

We will carefully go through some examples to clarify the structure of the Davey monoid $D(2)$ and $D(3)$.

It is a fact from the matching theory of graphs (an exercise in \cite{W01}) that a matrix with nonnegative integer entries whose row sums and column sums all equal to $m$ can be written as a sum of $m$ permutation matrices (not necessarily distinct).

When $p=2$ we are dealing with $2 \times 2$ matrices and there are only two permutation matrices. Thus a Davey matrix will be a matrix of the form 
$\mathbb{X} = \ell_1 \mathbb{I} + \ell_2 \mathbb{T}$ where 
$$
\mathbb{T}=\begin{bmatrix} 0 & 1 \\ 1 & 0 \end{bmatrix}.
$$
Thus 
$$
\mathbb{X} = \begin{bmatrix} \ell_1 & \ell_2 \\ \ell_2 & \ell_1 \end{bmatrix}
$$
and the row sums and column sums all equal to $m=\ell_1 + \ell_2$ which is hence the weight. 

 in order to be a Davey matrix, all the diagonal sums (not just the main diagonal) have to also equal $m = \ell_1 + \ell_2$. 
This forces $\ell_1 + \ell_1 = \ell_1 + \ell_2 = \ell_2 + \ell_2$ and so $\ell_1 = \ell_2$ and so the weight $m=2\ell_1$ has to be even. 
We have proven
\begin{theorem}
\label{theorem:Davey2}
Any $2 \times 2$ Davey matrix is of the form $\ell \begin{bmatrix} 1 & 1 \\ 1 & 1 \end{bmatrix}$ and so has even weight $m=2\ell$. Thus the Davey monoid $D(2)$ is isomorphic to the natural numbers under addition via isomorphism $\theta: \mathbb{N} \to D(2)$ given by $\theta(\ell)=\ell \begin{bmatrix} 1 & 1 \\ 1 & 1 \end{bmatrix}$. 
Thus the Davey group $\widehat{D}(2)$ is isomorphic to the integers, i.e., $\widehat{D}(2) \cong \mathbb{Z}$.
\end{theorem}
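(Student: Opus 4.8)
The plan is to determine all $2\times 2$ Davey matrices directly from the defining linear constraints and then read off the monoid and group statements. First I would write a general candidate $\mathbb{X} = \begin{bmatrix} a & b \\ c & d \end{bmatrix}$ with $a,b,c,d$ nonnegative integers and impose that all row and column sums equal the weight: $a+b = c+d = a+c = b+d = m$. Comparing $a+b$ with $a+c$ gives $b=c$, and comparing $a+c$ with $b+d$ then gives $a=d$. So at this stage $\mathbb{X} = \begin{bmatrix} a & b \\ b & a \end{bmatrix}$ with $a+b=m$; equivalently, as the text before the theorem observes, $\mathbb{X} = a\mathbb{I} + b\mathbb{T}$.

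Next I would bring in the diagonal constraints, which are exactly what distinguish Davey matrices from merely doubly balanced ones. With the row and column indices taken in $\mathbb{Z}_2$, the diagonal sum $\sum_i \mathbb{X}_{i,i+s}$ is $\mathbb{X}_{0,0}+\mathbb{X}_{1,1} = 2a$ for $s=0$ and $\mathbb{X}_{0,1}+\mathbb{X}_{1,0} = 2b$ for $s=1$. Being a Davey matrix of weight $m$ forces $2a = 2b = m$, hence $a = b =: \ell$ and $m = 2\ell$. Conversely, every matrix $\ell\begin{bmatrix}1&1\\1&1\end{bmatrix}$ visibly satisfies all the row, column and diagonal constraints, so these are precisely the elements of $D(2)$, and each has even weight $2\ell$.

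Finally I would check that the map $\theta:\mathbb{N}\to D(2)$, $\theta(\ell) = \ell\begin{bmatrix}1&1\\1&1\end{bmatrix}$, is a bijection (by the classification just obtained) and a monoid homomorphism, since $\theta(\ell)+\theta(\ell') = (\ell+\ell')\begin{bmatrix}1&1\\1&1\end{bmatrix} = \theta(\ell+\ell')$ and $\theta(0)$ is the zero matrix; hence $D(2)\cong(\mathbb{N},+)$. Because the Grothendieck construction is functorial and the Grothendieck group of $(\mathbb{N},+)$ is $(\mathbb{Z},+)$, it follows that $\widehat{D}(2)\cong\mathbb{Z}$. There is no genuine obstacle in this argument; the only point needing a moment's care is the bookkeeping of the wraparound diagonal sums in $\mathbb{Z}_2$, which is precisely the place where the parity of $m$ gets forced.
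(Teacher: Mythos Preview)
Your proof is correct and follows essentially the same route as the paper: reduce $\mathbb{X}$ to the form $a\mathbb{I}+b\mathbb{T}$ and then use the two diagonal sums to force $a=b$ and hence $m=2\ell$. The only cosmetic difference is that the paper reaches the $a\mathbb{I}+b\mathbb{T}$ form by invoking the matching-theory fact that a nonnegative integer matrix with constant row and column sums decomposes into permutation matrices, whereas you solve the row/column equations directly; for $2\times 2$ matrices these are equivalent and equally short.
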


This yields the following corollary:

\begin{corollary}
\label{corollary:multipleof4}
Let $(E,B)$ be a spectral pair in $\mathbb{Z}_2^d$, then either $|E|=1, 2$ or $|E|$ is a multiple of $4$.
\end{corollary}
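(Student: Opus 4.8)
The plan is to feed the spectral pair into the log-Hadamard / Davey-matrix machinery already set up and invoke Theorem~\ref{theorem:Davey2}. (The statement is the mod-$2$ shadow of the classical fact that a Hadamard matrix has order $1$, $2$, or a multiple of $4$; Theorem~\ref{theorem:Davey2} is precisely the self-contained replacement for that fact in the present language.) So suppose $(E,B)$ is a spectral pair in $\mathbb{Z}_2^d$ with $|E|\notin\{1,2\}$, and let me show $4\mid|E|$. By Corollary~\ref{corr:spectralsize}(c), $|E|>1$ already forces $|E|$ even, say $|E|=2m$, and $|E|\neq 2$ forces $m\geq 2$, so $|E|\geq 4$.

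First I would pass to a $2m\times 2m$ log-Hadamard matrix over $\mathbb{Z}_2$: by Theorem~\ref{theorem:basicpicturespectral}(g) the dot-product matrix $\mathbb{L}=\mathbb{E}\mathbb{B}^T$ of the pair is log-Hadamard, and after translating $E$ and $B$ separately so that $\vec 0\in E\cap B$ and ordering each set with $\vec 0$ listed first, $\mathbb{L}$ is dephased. Adding further multiples of $\mathfrak{1}$ to rows and columns and permuting them, I may put $\mathbb{L}$ into \emph{special dephased form}, so that its $0$th row and column vanish and its $1$st row and column equal $(0,1,0,1,\dots,0,1)$. Since $2m\geq 4$, the matrix genuinely has a $2$nd row.

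Next I would form the count matrix $\mathbb{X}=(x_{ij})_{0\le i,j\le 1}$, where $x_{ij}$ is the number of $i$'s occurring in the $2$nd row directly below a $j$ in the $1$st row. As recorded in the discussion preceding the definition of Davey matrices: balancedness of the $1$st row makes the column sums of $\mathbb{X}$ equal to $m$, balancedness of the $2$nd row makes the row sums equal to $m$, and balancedness of (2nd row)$-$(1st row) makes both diagonal sums $\sum_i x_{i,i+s}$ equal to $m$; hence $\mathbb{X}\in D_m$ is a Davey matrix of weight $m$ for $p=2$. By Theorem~\ref{theorem:Davey2} every $2\times 2$ Davey matrix has even weight, so $m$ is even, and therefore $|E|=2m$ is a multiple of $4$, which finishes the argument.

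I do not anticipate a genuine obstacle here: the combinatorial content has already been isolated in Theorem~\ref{theorem:Davey2}, so the only things needing care are bookkeeping ones — confirming that the weight of $\mathbb{X}$ is $|E|/2$ rather than $|E|$, and noting that $|E|=2$ is exactly the borderline $m=1$ case where the $2m\times 2m$ special dephased matrix has no $2$nd row and the argument correctly cannot be run. (If one prefers to avoid the reduction to special dephased form, the same step works verbatim using any two distinct rows of the dephased matrix other than the $0$th, since any two distinct rows of a log-Hadamard matrix differ by a balanced vector.)
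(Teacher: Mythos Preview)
Your proof is correct and follows essentially the same route as the paper: pass to a special dephased $2m\times 2m$ log-Hadamard matrix, use $|E|>2$ to guarantee a $2$nd row, form the $2\times 2$ Davey count matrix of weight $m$ from the $1$st and $2$nd rows, and apply Theorem~\ref{theorem:Davey2} to conclude $m$ is even. The only cosmetic difference is that the paper reaches special dephased form purely by reordering $E$ and $B$ (row/column permutations suffice once the matrix is already dephased), whereas you also mention adding multiples of $\mathfrak{1}$; this is harmless but unnecessary at that stage.
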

\begin{proof}
We may assume $\vec{0} \in E \cap B$ and $|E| > 2$. We then know that 
$|E|=2m$ is a multiple of $2$ by Theorem~\ref{theorem:basicpicturespectral}. The matrix $\mathbb{L}=\mathbb{E}\mathbb{B}^T$ is a dephased 
$2m \times 2m$ log-Hadamard matrix which can be made special dephased by suitable ordering of the elements of $E$ and $B$. As 
$|E| > 2$ we know there is a $2$nd row (labelling top row as $0$th row) and so by the previous discussion there must exist a 
$2 \times 2$ Davey matrix of weight $m$ whose $(i,j)$-entry is given by the number of $i$'s in the 2nd row below a $j$ in the 1st row. By Theorem~\ref{theorem:Davey2} we conclude $m=2k$ is even and so $|E|=2m=4k$ is a multiple of $4$.
\end{proof}

The Davey monoid and group often encode intricate structure constraints for log-Hadamard matrices and their corresponding Butson-type Hadamard matrices. 
However the difficulty of explicit computation of these goes up enormously with the prime $p$. 

We will now conduct the computation for the prime $p=3$. 
Thus we are considering $3 \times 3$ matrices with nonnegative integer entries $\mathbb{X}$ whose column sums, row sums and (all) diagonal sums add up to a fixed weight $m$. Such a matrix must in particular be a sum of $m$ (not necessarily) distinct permutation matrices. There are $3!=6$ of these now so the computation is more ugly:
$$
\mathbb{X} = \ell_1 \mathbb{I} + \ell_2 \sigma_1 + \ell_3 \sigma_2 + \ell_4 \sigma_3 + \ell_5 \tau + \ell_6 \tau^2
$$
where 
$\sigma_1 = \begin{bmatrix} 0 & 1 & 0 \\ 1 & 0 & 0 \\ 0 & 0 & 1 \end{bmatrix}, \sigma_2 = \begin{bmatrix} 0 & 0 & 1 \\ 0 & 1 & 0 \\ 1 & 0 & 0 \end{bmatrix}, 
\sigma_3 = \begin{bmatrix} 1 & 0 & 0 \\ 0 & 0 & 1 \\ 0 & 1 & 0 \end{bmatrix}, \tau= \begin{bmatrix} 0 & 1 & 0 \\ 0 & 0 & 1 \\ 1 & 0 & 0 \end{bmatrix}
$ and $\tau^2= \begin{bmatrix} 0 & 0 & 1\\ 1 & 0 & 0 \\ 0 & 1 & 0 \end{bmatrix}$.

The matrices $\sigma_1, \sigma_2, \sigma_3$ have all their diagonal sums (not just main diagonal) equal and so 
$\mathbb{X} = \ell_1 \mathbb{I} + \ell_2 \sigma_1 + \ell_3 \sigma_2 + \ell_4 \sigma_3 + \ell_5 \tau + \ell_6 \tau^2$ is Davey if and only if 
$\ell_1 \mathbb{I} + \ell_5 \tau + \ell_6 \tau^2$ is Davey. A quick computation of the diagonal sums of this last expression shows that this happens if and only if 
$3 \ell_1 = 3 \ell_5  = 3 \ell_6$ in which case this last expression equals $\ell_1 \begin{bmatrix} 1 & 1 & 1 \\ 1 & 1 & 1 \\ 1 & 1 & 1 \end{bmatrix}$ the all one-matrix.
However this all one matrix is also the sum $\sigma_1 + \sigma_2 + \sigma_3$. Thus we conclude that any $3 \times 3$ Davey matrix can be written 
uniquely in the form $s_1 \sigma_1 + s_2 \sigma_2 + s_3 \sigma_3$.

We summarize these results in the following theorem:

\begin{theorem}
\label{theorem:Davey3}
The set of $3 \times 3$ Davey matrices are those matrices of the form 
$$s_1 \begin{bmatrix} 0 & 1 & 0 \\ 1 & 0 & 0 \\ 0 & 0 & 1 \end{bmatrix} + s_2 \begin{bmatrix} 0 & 0 & 1 \\ 0 & 1 & 0 \\ 1 & 0 & 0 \end{bmatrix} + s_3 \begin{bmatrix} 1 & 0 & 0 \\ 0 & 0 & 1 \\ 0 & 1 & 0 \end{bmatrix}$$ for suitable unique nonnegative integers $s_1, s_2, s_3$. Thus $D(3) \cong \mathbb{N}^3$ and 
$\widehat{D}(3) \cong \mathbb{Z}^3$.
\end{theorem}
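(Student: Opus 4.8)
The plan is to build on the permutation-matrix decomposition already assembled in the discussion preceding the statement, so the argument is essentially bookkeeping on the six $3\times 3$ permutation matrices $\mathbb{I},\sigma_1,\sigma_2,\sigma_3,\tau,\tau^2$. First I would invoke the matching-theory fact (the integral Birkhoff--von Neumann statement quoted earlier) that any $3\times 3$ nonnegative integer matrix whose row and column sums all equal $m$ is a sum of $m$ permutation matrices. Hence every candidate Davey matrix can be written as $\mathbb{X}=\ell_1\mathbb{I}+\ell_2\sigma_1+\ell_3\sigma_2+\ell_4\sigma_3+\ell_5\tau+\ell_6\tau^2$ with nonnegative integers $\ell_i$ summing to $m$. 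For such an $\mathbb{X}$ the row/column condition is automatic, so the only remaining requirement for membership in $D_m$ is that the three broken-diagonal sums $\sum_i \mathbb{X}_{i,i+s}$, $s=0,1,2$, all coincide.

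Next I would compute these diagonal sums on each generator. Each of $\sigma_1,\sigma_2,\sigma_3$ (a transposition plus a fixed point) has all three broken-diagonal sums equal to $1$, so the block $\ell_2\sigma_1+\ell_3\sigma_2+\ell_4\sigma_3$ adds the constant $\ell_2+\ell_3+\ell_4$ to every diagonal sum and is irrelevant to the Davey condition. By contrast $\mathbb{I}$, $\tau$, $\tau^2$ concentrate all their weight on the single broken diagonal of offset $0$, $1$, $2$ respectively, so $\ell_1\mathbb{I}+\ell_5\tau+\ell_6\tau^2$ has diagonal sums $3\ell_1$, $3\ell_5$, $3\ell_6$. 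Therefore $\mathbb{X}$ is Davey if and only if $\ell_1=\ell_5=\ell_6$, in which case $\ell_1\mathbb{I}+\ell_5\tau+\ell_6\tau^2=\ell_1(\mathbb{I}+\tau+\tau^2)=\ell_1 J$, where $J$ is the all-ones matrix. Since $J=\sigma_1+\sigma_2+\sigma_3$ as well, I can absorb this term to obtain $\mathbb{X}=(\ell_1+\ell_2)\sigma_1+(\ell_1+\ell_3)\sigma_2+(\ell_1+\ell_4)\sigma_3=s_1\sigma_1+s_2\sigma_2+s_3\sigma_3$ with $s_i\ge 0$. Conversely, any such combination has all row, column and broken-diagonal sums equal to $s_1+s_2+s_3$, hence lies in $D_{s_1+s_2+s_3}$; this establishes the stated description of $3\times 3$ Davey matrices.

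For uniqueness I would just read off three entries: because $\sigma_1,\sigma_2,\sigma_3$ place their nonzero pattern so that $(s_1\sigma_1+s_2\sigma_2+s_3\sigma_3)_{0,1}=s_1$, $(\,\cdot\,)_{0,2}=s_2$, $(\,\cdot\,)_{0,0}=s_3$, the coefficients $s_i$ are recovered as specific matrix entries, so the representation is unique (equivalently, $\sigma_1,\sigma_2,\sigma_3$ are $\mathbb{Z}$-linearly independent with an obvious left inverse). The map $\mathbb{X}\mapsto(s_1,s_2,s_3)$ is then a monoid isomorphism $D(3)\xrightarrow{\ \sim\ }\mathbb{N}^3$, clearly additive and sending $D_m$ onto $\{(s_1,s_2,s_3):s_1+s_2+s_3=m\}$. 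Finally, since $\mathbb{N}^3$ is a cancellative commutative monoid, its Grothendieck group is $\mathbb{Z}^3$, giving $\widehat{D}(3)\cong\mathbb{Z}^3$.

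I do not expect a serious obstacle here; the one load-bearing computation is the diagonal-sum tally that isolates $\ell_1\mathbb{I}+\ell_5\tau+\ell_6\tau^2$, and the only subtlety to phrase carefully is that the Birkhoff--von Neumann decomposition is itself non-unique. I would therefore present it as: every Davey matrix admits \emph{some} such $(\ell_i)$, and any $(\ell_i)$ that yields a Davey matrix must satisfy $\ell_1=\ell_5=\ell_6$; genuine uniqueness is then recovered only after passing to the $(s_1,s_2,s_3)$ coordinates, where it follows from the entrywise readout above.
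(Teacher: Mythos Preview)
Your proposal is correct and follows essentially the same route as the paper: both use the permutation-matrix decomposition, observe that $\sigma_1,\sigma_2,\sigma_3$ have equal broken-diagonal sums while $\mathbb{I},\tau,\tau^2$ concentrate on a single diagonal each, deduce $\ell_1=\ell_5=\ell_6$, and then absorb $\ell_1 J$ using $J=\sigma_1+\sigma_2+\sigma_3$. Your explicit entrywise readout for uniqueness and your remark about the non-uniqueness of the Birkhoff decomposition are small but welcome clarifications that the paper leaves implicit.
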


We will use Theorem~\ref{theorem:Davey3} to give a self-contained human-readable proof that the Fuglede conjecture holds in $\mathbb{Z}_3^3$ in a later 
section. Of course computer calculations can also be used to establish the same result. The precise form needed to prove this conjecture is stated in the next 
corollary:

\begin{corollary}
\label{corollary:Davey3}
Given $\vec{x}, \vec{y}$ equi-dimensional balanced vectors over $\mathbb{Z}_3$ whose difference is $\vec{x}-\vec{y}$ is also balanced, we have 
the {\bf triplet rule}: For any fixed distinct values for $i,j,k$ in $\mathbb{Z}_3$, the number of coordinates where the value of $\vec{x}$ is $i$ and 
the value of $\vec{y}$ is $j$ is equal to the number of coordinates where the value of $\vec{x}$ is $j$ and the value of $\vec{y}$ is $i$ and these also equal 
the number of coordinates where both $\vec{x}$ and $\vec{y}$ have value $k$.
\end{corollary}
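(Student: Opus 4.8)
The plan is to package the pair $(\vec{x},\vec{y})$ into a $3\times 3$ matrix, recognize it as a Davey matrix, and then read the triplet rule directly off the explicit classification in Theorem~\ref{theorem:Davey3}. Concretely, I would introduce the counting matrix: let $N$ be the common dimension of $\vec{x}$ and $\vec{y}$ (necessarily a multiple of $3$, since a balanced vector over $\mathbb{Z}_3$ has dimension divisible by $3$), and let $\mathbb{X}$ be the $3\times 3$ matrix, with row and column indices in $\mathbb{Z}_3$, whose $(i,j)$-entry $\mathbb{X}_{ij}$ is the number of coordinates at which $\vec{x}$ takes the value $i$ and $\vec{y}$ takes the value $j$. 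This is exactly the kind of matrix introduced just before the definition of the Davey monoid, with $\vec{y}$ playing the role of the ``$1$st row'' and $\vec{x}$ that of the ``$2$nd row.''

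Next I would verify that $\mathbb{X}$ is a Davey matrix of weight $m=N/3$. Its $i$-th row sum is the number of coordinates where $\vec{x}=i$, which equals $N/3$ because $\vec{x}$ is balanced; its $j$-th column sum is the number of coordinates where $\vec{y}=j$, which equals $N/3$ because $\vec{y}$ is balanced; and its diagonal sum $\sum_{i\in\mathbb{Z}_3}\mathbb{X}_{i,\,i+s}$ is the number of coordinates where $\vec{x}-\vec{y}=-s$, which equals $N/3$ because $\vec{x}-\vec{y}$ is balanced (negation being a bijection of $\mathbb{Z}_3$). Hence $\mathbb{X}\in D_m$.

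Now I would apply Theorem~\ref{theorem:Davey3} to write $\mathbb{X}=s_1\sigma_1+s_2\sigma_2+s_3\sigma_3$ for unique nonnegative integers $s_1,s_2,s_3$, where $\sigma_1,\sigma_2,\sigma_3$ are the three transposition permutation matrices displayed there. The decisive observation is that $\sigma_\ell$ is the permutation matrix of the transposition that swaps the two elements $\{a,b\}$ of $\mathbb{Z}_3$ and fixes the remaining element $c$, so its support is precisely the triple of cells $\{(a,b),(b,a),(c,c)\}$; as $\ell$ ranges over $1,2,3$ these three triples partition the nine cells of a $3\times 3$ matrix. Hence for distinct $i,j,k\in\mathbb{Z}_3$ (so $\{i,j,k\}=\{0,1,2\}$ and $k$ is the fixed point of the transposition $(i\,j)$), the cells $(i,j)$, $(j,i)$, $(k,k)$ all lie in the support of exactly one $\sigma_\ell$ — the one indexed by the pair $\{i,j\}$ — and outside the supports of the other two, which gives $\mathbb{X}_{ij}=\mathbb{X}_{ji}=\mathbb{X}_{kk}=s_\ell$. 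Unwinding the definition of $\mathbb{X}$, this is exactly the triplet rule.

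The only place requiring care is the bookkeeping: keeping the roles of $\vec{x}$ and $\vec{y}$ as row- versus column-index fixed throughout, and getting the sign right in the diagonal-sum step; after that, matching ``distinct $i,j,k$'' to the three cells fixed by a single transposition is immediate, and there is no genuine obstacle once Theorem~\ref{theorem:Davey3} is in hand. One could also remark that the argument shows slightly more, namely that the common triplet value is the coefficient $s_\ell$ of the transposition fixing $k$, but this is not needed for the statement.
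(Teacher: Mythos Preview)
Your proposal is correct and follows essentially the same approach as the paper: form the counting matrix $\mathbb{X}$ from the pair $(\vec{x},\vec{y})$, recognize it as a $3\times 3$ Davey matrix, invoke Theorem~\ref{theorem:Davey3}, and read off the triplet rule from the explicit form $\mathbb{X}=s_1\sigma_1+s_2\sigma_2+s_3\sigma_3$. You supply more detail than the paper (explicitly verifying the row, column, and diagonal sums, and spelling out why the support of each $\sigma_\ell$ is a triple $\{(a,b),(b,a),(c,c)\}$), but the argument is the same.
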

\begin{proof}
Let $\bb{X}$ be the matrix whose $(m,n)$-entry is equal to the number of coordinates where $\vec{x}$ has value $m$ and $\vec{y}$ has value $n$.
$\bb{X}$ is a $3 \times 3$ Davey matrix. By theorem~\ref{theorem:Davey3} we must have 
$$
\bb{X} = \begin{bmatrix} s_3 & s_1 & s_2 \\ s_1 & s_2 & s_3 \\ s_2 & s_3 & s_1 \end{bmatrix}
$$
for some nonnegative integers $s_1, s_2, s_3$. The result follows immediately from this.
\end{proof}

\subsection{Balanced vectors}

Fix a prime $p$. Recall that a $\mathbb{Z}_p$ vector is balanced if each element of $\mathbb{Z}_p$ occurs the same number of times as a coordinate of the vector. 
It is clear that balanced vectors exist only in dimensions $mp$ where $m \geq 1$ is an integer. We denote by $B_m$ the set of balanced 
$mp$-dimensional vectors over $\mathbb{Z}_p$. 

Notice that $B_m$ is a cone as if $\vec{v} \in B_m$, then $c\vec{v} \in B_m$ for all $c \in \mathbb{Z}_p-\{ 0 \}$. Furthermore as adding any multiple 
of the all one vector $\mathfrak{1}$ to a balanced vector $\vec{v}$ yields a balanced vector, we see that $B_m$ is a union of 
$2$-planes through the origin containing the line through $\mathfrak{1}$ with this line taken out. 
Indeed if $\vec{v} \in B_m$ then $c\vec{v} + m \mathfrak{1} \in B_m$ for all $m \in \mathbb{Z}_p, c \in \mathbb{Z}_p - \{ 0 \}$. Thus the whole $2$-plane spanned by 
$\vec{v}$ and $\mathfrak{1}$ minus the line through $\mathfrak{1}$ is contained in $B_m$.

The symmetric group $\sigma_{mp}$ acts on $\mathbb{Z}_p^{mp}$ by permuting coordinates and this action preserves the cone $B_m$. As every subset of a vector space over a finite field is an algebraic set (zero set of collection of polynomials) we conclude that $B_m$ must be the common zero set of a collection of symmetric polynomials which we will identify explicitly next.

Let $x_1, x_2, \dots, x_{mp}$ denote the standard coordinate variables of $\mathbb{Z}_p^{pm}$.  Consider the formal polynomial in new variable $t$ given by 
$p(t)=(t-x_1)(t-x_2) \dots (t-x_{mp})$ then $p$ is a monic polynomial in the variable $t$ of degree $mp$.

The vector $\vec{x}=(x_1,\dots, x_{mp})$ is balanced if and only if the associated formal polynomial $p(t)=(t-x_1)(t-x_2) \dots (t-x_{mp})$ considered in 
$\mathbb{Z}_p[t]$ is equal to the polynomial $$t^m(t-1)^m(t-2)^m \dots (t-(p-1))^m = (t(t-1)(t-2)\dots (t-(p-1)))^m = (t^p-t)^m.$$ 

Recall the elementary symmetric polynomials $\sigma_1, \dots, \sigma_{mp}$ given by $\sigma_1 = x_1 + \dots + x_{mp}, \sigma_2 = \sum_{i < j} x_i x_j, 
\sigma_3 = \sum_{i < j < k} x_ix_jx_k, \dots, \sigma_{mp} = x_1 x_2 \dots x_{mp}$ satisfy 
$p(t)=(t-x_1)(t-x_2) \dots (t-x_{mp}) = \sum_{j =0}^{mp} (-1)^j t^{mp-j} \sigma_j$.  (Here we adopt the convention that $\sigma_0=1$ for convenience.)

Thus the we see that $\vec{x}=(x_1,\dots, x_{mp})$ is a balanced vector if and only if $p(t)=\sum_{j=0}^{mp} (-1)^j t^{mp-j} \sigma_j = (t^p-t)^m$.
Expanding $(t^p-t)^m = \sum_{i=0}^m \binom{m}{i} (-1)^{i} t^{p(m-i)+ i}$ using the binomial formula, we see this is equivalent to the conditions 
that $\sigma_j = 0$ when $j$ is not a multiple of $p-1$ and $\sigma_{i(p-1)} = (-1)^i \binom{m}{i}$ for all $0 \leq i \leq m$ and $\sigma_{i(p-1)}=0$ for all $i > m$.

We have thus proven the following theorem:

\begin{theorem}[Geometry of the set of balanced vectors]
Fix $p$ a prime and let $B_m$ denote the set of balanced vectors in $\mathbb{Z}_p^{pm}$. Then
$B_m$ is a cone. In fact $B_m$ is a union of $2$-spaces containing the line through $\mathfrak{1}$, the all one vector, minus this line. 
Furthermore $B_m$ can be given as the solution set to the following polynomial equalities: \\
$$\sigma_j=0$$ when $j$ is not a multiple of $p-1$, and $$\sigma_{i(p-1)}=(-1)^i \binom{m}{i}$$ where $\sigma_i$ are the elementary symmetric polynomials in 
the coordinate variables of $\mathbb{Z}_p^{pm}$.
\end{theorem}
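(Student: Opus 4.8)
The plan is to establish the three assertions of the theorem in order: first that $B_m$ is a cone, then that it is a union of punctured $2$-planes through the line $L = \mathbb{Z}_p\mathfrak{1}$, and finally the polynomial description, which is the heart of the statement. Throughout I use the defining characterization that $\vec{x} = (x_1,\dots,x_{mp})$ is balanced precisely when each of the $p$ residues in $\mathbb{Z}_p$ occurs exactly $m$ times among its coordinates.

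For the first two assertions I would argue directly. Multiplying a vector by a nonzero scalar $c$ applies the bijection $a \mapsto ca$ to the residues, and adding $t\mathfrak{1}$ applies the bijection $a \mapsto a+t$; each of these sends a vector in which every residue appears $m$ times to another such vector. Hence for any $\vec{v} \in B_m$, any $c \in \mathbb{Z}_p - \{0\}$ and any $t \in \mathbb{Z}_p$ we get $c\vec{v} + t\mathfrak{1} \in B_m$. Since $m \geq 1$, a constant vector is never balanced, so $\vec{v}$ is not a scalar multiple of $\mathfrak{1}$; thus $\vec v$ and $\mathfrak 1$ are independent, the plane $P_{\vec v} := \mathrm{span}(\vec v, \mathfrak 1)$ is genuinely two-dimensional, and the set of vectors just described is exactly $P_{\vec v} \setminus L$ (a vector $c\vec v + t\mathfrak 1$ with $c \neq 0$ cannot lie on $L$, else $\vec v$ would be a multiple of $\mathfrak 1$). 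Since each $\vec v \in B_m$ lies in $P_{\vec v}\setminus L \subseteq B_m$, taking the union over $\vec v \in B_m$ gives $B_m = \bigcup_{\vec v \in B_m}(P_{\vec v}\setminus L)$, which yields both the cone property and the claimed geometric structure.

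For the polynomial description I would attach to $\vec x$ the monic polynomial $p(t) = \prod_{i=1}^{mp}(t-x_i) \in \mathbb{Z}_p[t]$ and use the standard identity $t^p - t = \prod_{a \in \mathbb{F}_p}(t-a)$. If $\vec x$ is balanced, grouping factors by value gives $p(t) = \prod_{a=0}^{p-1}(t-a)^m = \bigl(\prod_{a}(t-a)\bigr)^m = (t^p - t)^m$. Conversely, since $\mathbb{Z}_p[t]$ is a unique factorization domain and the $(t-a)$, $a \in \mathbb{F}_p$, are distinct monic irreducibles, the equality $\prod_i(t-x_i) = \prod_a(t-a)^m$ forces the multiset $\{x_1,\dots,x_{mp}\}$ to be exactly each $a\in\mathbb{F}_p$ with multiplicity $m$, i.e.\ $\vec x$ is balanced. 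Finally I would convert the identity $p(t) = (t^p-t)^m$ into conditions on elementary symmetric functions: writing $p(t) = \sum_{j=0}^{mp}(-1)^j\sigma_j\,t^{mp-j}$ (with $\sigma_0 = 1$) and expanding $(t^p-t)^m = \sum_{i=0}^{m}\binom{m}{i}(-1)^i\,t^{mp - i(p-1)}$ by the binomial theorem, comparison of the coefficient of $t^{mp-j}$ shows $\sigma_j = 0$ unless $j = i(p-1)$ for some $0 \le i \le m$, and $(-1)^{i(p-1)}\sigma_{i(p-1)} = (-1)^i\binom{m}{i}$ in that case. Since $p-1$ is even for odd $p$ (and all signs are trivial when $p=2$), this is exactly $\sigma_{i(p-1)} = (-1)^i\binom{m}{i}$, with $\sigma_{i(p-1)} = 0$ for $i > m$.

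The only step that is more than formal bookkeeping is the converse direction of the polynomial characterization — recovering the coordinate multiset from $p(t) = (t^p-t)^m$ — but this is immediate from unique factorization in $\mathbb{Z}_p[t]$ once one notes that the $x_i$ already lie in $\mathbb{F}_p$, so there is no serious obstacle; the remaining care is just in the index and sign bookkeeping in the coefficient comparison, and in checking the degenerate case $p = 2$, where the hypothesis ``$j$ not a multiple of $p-1$'' is vacuous.
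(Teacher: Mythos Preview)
Your proof is correct and follows essentially the same approach as the paper: establish the cone/$2$-plane structure from the invariance of balance under nonzero scaling and translation by $\mathfrak{1}$, then characterize balanced vectors by the polynomial identity $\prod_i(t-x_i)=(t^p-t)^m$ and read off the symmetric-function conditions by comparing coefficients. If anything you are slightly more careful than the paper in spelling out the converse via unique factorization in $\mathbb{Z}_p[t]$ and in tracking the sign $(-1)^{i(p-1)}$.
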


Note the elementary symmetric polynomial $\sigma_j$ is homogeneous of degree $j$ in the sense that 
$\sigma_j(cx_1, \dots, cx_{mp}) = c^j \sigma_j(x_1, \dots, x_{mp})$. The fact that $B_m$ is a cone and hence invariant under nonzero scalings is consistent 
with the equations obtained above as $c^{p-1}=1$ when $c \in \mathbb{Z}_p - \{ 0 \}$.

The following is a cute corollary:

\begin{corollary}[Wilson's Theorem]
As the vector $\vec{x}=(0,1,2,\dots,p-1) \in B_1$ it satisfies $\sigma_{p-1}(0,1,2,\dots,p-1)=-1$. This simplifies to $(p-1)! = -1$ in $\mathbb{Z}_p$ 
which is the well known Wilson's Theorem.
\end{corollary}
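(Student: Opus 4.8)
The plan is to apply the preceding Theorem on the geometry of balanced vectors to the single explicit vector $\vec{x} = (0,1,2,\dots,p-1) \in \mathbb{Z}_p^p$. First I would observe that this vector is balanced with parameter $m=1$: it has exactly $p$ coordinates and each residue $0,1,\dots,p-1$ occurs exactly once, so $\vec{x} \in B_1$. Hence, by the Theorem, all of its elementary symmetric polynomial values are pinned down; in particular, the condition $\sigma_{i(p-1)} = (-1)^i \binom{m}{i}$ with $m=1$ and $i=1$ gives $\sigma_{p-1}(\vec{x}) = (-1)^1 \binom{1}{1} = -1$ in $\mathbb{Z}_p$ (the vanishing conditions $\sigma_j(\vec{x})=0$ for other $j$ will not be needed).

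Next I would compute $\sigma_{p-1}(0,1,\dots,p-1)$ directly. Recall that $\sigma_{p-1}$ of $p$ variables is the sum of all products of $p-1$ of them, i.e. $\sigma_{p-1}(x_1,\dots,x_p) = \sum_{k=1}^{p} \prod_{j \neq k} x_j$. Since one coordinate of $\vec{x}$ equals $0$, every term of this sum that retains the $0$-coordinate vanishes, and the unique surviving term is the product of all the nonzero coordinates, namely $1 \cdot 2 \cdots (p-1) = (p-1)!$. Therefore $\sigma_{p-1}(\vec{x}) = (p-1)!$ as an element of $\mathbb{Z}_p$.

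Combining the two evaluations gives $(p-1)! = -1$ in $\mathbb{Z}_p$, which is exactly Wilson's Theorem. There is no real obstacle: the substance is entirely carried by the Theorem on balanced vectors (whose proof in turn rests on the identity $t(t-1)\cdots(t-(p-1)) = t^p - t$ in $\mathbb{Z}_p[t]$). One could alternatively extract Wilson's Theorem by comparing the coefficient of $t$ on both sides of that identity, but routing it through the stated Theorem is cleaner. The only point to keep straight is the sign bookkeeping, which is handled uniformly by the $(-1)^i\binom{m}{i}$ formula, and the observation that among the $p$ coordinates one is zero so that $\sigma_{p-1}$ collapses to a single monomial.
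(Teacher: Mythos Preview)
Your proof is correct and follows exactly the approach implicit in the paper's statement: apply the preceding theorem with $m=1$, $i=1$ to get $\sigma_{p-1}(0,1,\dots,p-1)=-1$, then observe that because one coordinate is $0$, the sum defining $\sigma_{p-1}$ collapses to the single term $1\cdot 2\cdots(p-1)=(p-1)!$. The paper leaves the ``simplifies to'' step unjustified, and your argument supplies precisely that missing detail.
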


\section{Preliminary Results}

We proceed to prove parts of Theorem \ref{theorem:main1} on tiling and spectral sets of specified size. 

\subsection{Graphs of functions}

We begin with a short account of graphs. Let $V$ be a $k$-dimensional subspace of $\bb{Z}_p^d$ and $W$ any complementary subspace in the sense that $\bb{Z}_p^d$ is the direct sum of $V$ and $W$. To keep the notation simple we sometimes express elements of $\bb{Z}_p^d$ as the ordered pairs $(v,w)$ for unique $v \in V$ and $w \in W$.

Given a  function $f: V \to W$ we define the graph of $f$ (relative to $V$, $W$) to be the following subset of $\bb{Z}_p^d$:
\[
\text{Graph}_{V,W}(f) = \{ (v,f(v)) : v \in V\} \subseteq \mathbb{Z}_p^d. 
\]
We suppress the subscripts in the cases where $V,W$ are unambiguously determined from the context.

Note that upon a choice of basis for $V$ and $W$, the set of functions $f: V \to W$ can be identified with the set of functions 
$f: \mathbb{Z}_p^s \to \mathbb{Z}_p^{d-s}$ when $\dim(V)=s$. 

Note in the case $\dim(V)=0$ the graph sets $\text{Graph}_{V,W}(f)$ are just the singleton subsets and in the case $\dim{V}=d$, the graph sets are the whole space.

The purpose of this subsection is to prove the following characterisation of graphs:
\begin{proposition}
\label{prop:graph=tile}
Let $V$ be a subspace of $\bb{Z}_p^d$, $W$ any complementary subspace of $V$, and $E \subseteq \bb{Z}_p^d$. $(E,V)$ is a tiling pair if and only if $E$ is a graph of a function $f: W \to V$.
\end{proposition}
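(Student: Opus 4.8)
The plan is to prove both implications directly from the definition of a tiling pair, using the direct sum decomposition $\mathbb{Z}_p^d = W \oplus V$. Throughout, write each $x \in \mathbb{Z}_p^d$ as the ordered pair $(w,v)$ with $w \in W$ and $v \in V$ uniquely determined, so that $V$ consists of the pairs $(0,v)$, and let $\pi\colon \mathbb{Z}_p^d \to W$ be the projection $\pi(w,v) = w$, a linear map with kernel $V$.

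For the ``if'' direction, suppose $E = \text{Graph}_{W,V}(f) = \{(w,f(w)) : w \in W\}$ for some function $f\colon W \to V$. I would check that every $x = (w_0,v_0) \in \mathbb{Z}_p^d$ is expressible as $e + v$ with $e \in E$, $v \in V$ in exactly one way: an element $e = (w,f(w)) \in E$ together with $v = (0,u) \in V$ gives $e+v = (w, f(w)+u)$, which equals $(w_0,v_0)$ precisely when $w = w_0$ and $u = v_0 - f(w_0)$. By the equivalence of (a) and (b) in Theorem~\ref{theorem:basicpicturetiling} (noting that $|E| \cdot |V| = |W| \cdot |V| = |\mathbb{Z}_p^d|$ holds automatically), $(E,V)$ is a tiling pair.

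For the ``only if'' direction, suppose $(E,V)$ is a tiling pair. The key step is to show that $\pi$ restricts to a bijection $E \to W$. Injectivity: if $e,e' \in E$ satisfy $\pi(e) = \pi(e')$, then $e - e' \in \ker\pi = V$, so if $e \neq e'$ the two expressions $e = e + 0$ and $e = e' + (e-e')$ would both write $e$ as (element of $E$) $+$ (element of $V$), contradicting the uniqueness in Theorem~\ref{theorem:basicpicturetiling}(a). (Equivalently, one may cite part (g) together with the fact that $\text{Dir}(V) = V - \{0\}$ for a subspace $V$.) Surjectivity then follows by counting: tiling forces $|E| \cdot |V| = |\mathbb{Z}_p^d| = |W| \cdot |V|$, hence $|E| = |W|$, so the injection $\pi|_E$ is onto. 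Therefore for each $w \in W$ there is a unique $e_w \in E$ with $\pi(e_w) = w$; writing $e_w = (w, f(w))$ defines a function $f\colon W \to V$ with $E = \{e_w : w \in W\} = \text{Graph}_{W,V}(f)$.

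The whole argument is essentially bookkeeping with the direct sum $\mathbb{Z}_p^d = W \oplus V$, and I do not expect a genuine obstacle. The one point that needs a little care is the injectivity of $\pi|_E$, where the uniqueness clause in the definition of a tiling must be invoked correctly (rather than merely the covering property).
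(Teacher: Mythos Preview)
Your proof is correct and follows essentially the same approach as the paper: both directions are argued directly from the direct-sum decomposition $\mathbb{Z}_p^d = W \oplus V$, with the key point in the ``only if'' direction being that $E$ meets each coset of $V$ exactly once. The only cosmetic difference is that the paper first invokes the symmetry $(E,V)\leftrightarrow(V,E)$ of tiling pairs and phrases the argument in terms of disjoint translates $e+V$, whereas you work directly with the uniqueness clause in condition~(a) of Theorem~\ref{theorem:basicpicturetiling}; these are equivalent formulations of the same observation.
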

\begin{proof}
Suppose first that $(E,V)$ is a tilling pair. By Corollary \ref{corr:afinetiling} we know that $(V,E)$ is a tiling pair. We deduce from this that $E$ contains precisely one element in each coset of $V$. Suppose that both $e$ and $e'$ belong to the coset $x+V$. Then both cosets $e+V$ and $e'+V$ equal $x+V$ ($V$ is a subspace). As $(V,E)$ tiles, we have $e=e'$.

Therefore one can identify elements of $E$ with cosets of $V$. The latter can also be identified with elements of the complementary subspace $W$. So to each $w \in W$ corresponds a unique $e_w\in E$ given by $E \cap (w+V) = \{e_w\}$. In the notation introduced above $e_w = (w, f(w))$ where $f(w)$ is a unique vector in $V$. This defines a function 
$f: W \to V$ with the property that $E=\text{Graph}_{W,V}(f)$. Note that we made use of the fact that $V$ is a complementary subspace to $W$.
 
Conversely, suppose that $f: W \to V$ is a function and $E=\{(w,f(w)) : w \in W\}$. $(E,V)$ is a tiling set because $|E| |V| = |W| |V| = p^d$ and the translates $E+v$ are disjoint because
the elements of $E+v$ are of the form $(w,f(w)+v)$ for some $w \in W$ and so $(w,f(w)+v) = (w',f(w')+v')$ implies (by looking at the first and then at the second coordinate) that $w=w'$ and $v=v'$.
\end{proof}

\subsection{Subspaces as spectral and tiling partners}

The proposition above classifies tiling sets with subspace tiling partners. We now prove that such sets are also spectral sets and a converse.
\begin{proposition}
\label{prop:subsptilspectr}
Let $V$ be a subspace of $\bb{Z}_p^d$, $V^\perp$ the orthogonal complement of $V$, and $E \subseteq \bb{Z}_p^d$. $(E,V)$ is a tiling pair if and only if $(E,V^\perp)$ is a spectral pair. This happens exactly when $E$ is the graph of a function $f: W \to V$ for some complement $W$ of $V$.
\end{proposition}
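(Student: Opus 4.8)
The plan is to reduce both the tiling condition and the spectral condition to the \emph{same} Fourier-analytic statement about $\widehat{E}$, via the explicit Fourier transform of the indicator function of a subspace. First I would record the elementary computation that for any subspace $V \subseteq \mathbb{Z}_p^d$ and any $m \in \mathbb{Z}_p^d$,
$$\widehat{V}(m) = \begin{cases} |V|/p^d & m \in V^\perp \\ 0 & m \notin V^\perp. \end{cases}$$
This is just the remark that $x \mapsto x \cdot m$ is either identically zero on $V$ (precisely when $m \in V^\perp$) or a surjective homomorphism $V \to \mathbb{Z}_p$, in which case the character sum over $V$ vanishes. In particular $\mathrm{Spt}(\widehat{V}) = V^\perp - \{\vec{0}\}$ and $Z(\widehat{V}) = \mathbb{Z}_p^d - V^\perp$, and since $\dim V + \dim V^\perp = d$, the numerical constraints $|E||V| = p^d$ and $|E| = |V^\perp|$ are one and the same.

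Next I would unwind the tiling side. If $(E,V)$ is a tiling pair then $|E||V| = p^d$ by definition, so Theorem~\ref{theorem:basicpicturetiling} (part (e)) applies and says $(E,V)$ tiles iff $\mathrm{Spt}(\widehat{E}) \cap \mathrm{Spt}(\widehat{V}) = \emptyset$; by the first paragraph this is exactly the statement that $\widehat{E}$ vanishes on $V^\perp - \{\vec{0}\}$. For the spectral side, the key (and only slightly delicate) point is that since $V^\perp$ is a subspace, $\mathrm{Dir}(V^\perp) = V^\perp - \{\vec{0}\}$, as every nonzero $b \in V^\perp$ is the difference $b - \vec{0}$ of two elements of $V^\perp$. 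Hence by Theorem~\ref{theorem:basicpicturespectral} (part (d)), granted $|E| = |V^\perp|$, the pair $(E, V^\perp)$ is spectral iff $\widehat{E}(\mathrm{Dir}(V^\perp)) = 0$, i.e.\ again iff $\widehat{E}$ vanishes on $V^\perp - \{\vec{0}\}$.

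Since the size conditions coincide and the two vanishing conditions are literally identical, this proves $(E,V)$ is a tiling pair if and only if $(E,V^\perp)$ is a spectral pair. The final clause of the proposition is then immediate from Proposition~\ref{prop:graph=tile}, which already characterizes tiling pairs $(E,V)$ as those $E$ that are graphs $\mathrm{Graph}_{W,V}(f)$ of a function $f : W \to V$ over a complement $W$ of $V$ (for one, equivalently every, complement $W$).

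I do not anticipate a genuine obstacle: once the Fourier transform of a subspace is in hand the argument is bookkeeping. The two places I would be careful to state explicitly rather than leave to the reader are the identity $\mathrm{Dir}(V^\perp) = V^\perp - \{\vec{0}\}$ (which is what makes a \emph{subspace} spectrum behave so rigidly) and the matching of the tiling size constraint $|E||V| = p^d$ with the spectral size constraint $|E| = |V^\perp|$, so that Theorems~\ref{theorem:basicpicturetiling} and~\ref{theorem:basicpicturespectral} can both be invoked in each direction of the equivalence.
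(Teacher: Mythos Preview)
Your proposal is correct and follows essentially the same approach as the paper: both arguments rest on the explicit Fourier transform of the indicator of a subspace (the paper states this as a separate lemma), the matching of the size constraints $|E||V|=p^d$ and $|E|=|V^\perp|$, and the observation that differences of elements of the subspace $V^\perp$ remain in $V^\perp$. The only organizational difference is that you reduce both the tiling and spectral conditions to the single common statement ``$\widehat{E}$ vanishes on $V^\perp-\{\vec 0\}$'' and conclude the equivalence in one stroke, whereas the paper proves the two implications separately (in fact it proves $(E,V)$ tiling $\Rightarrow (E,V^\perp)$ spectral and $(E,V)$ spectral $\Rightarrow (E,V^\perp)$ tiling, then implicitly uses $(V^\perp)^\perp=V$); your packaging via $\mathrm{Dir}(V^\perp)=V^\perp-\{\vec 0\}$ is a clean way to make the spectral side transparent.
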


Prior to proving the proposition we give a proof of a well known fact about the Fourier transform of characteristic sets of subspaces.
\begin{lemma}
\label{Fouriersubspace}
Let $V$ be a subspace of $\bb{Z}_p^d$ and $V^\perp$ its orthogonal complement. Writing $V$ for the characteristic function of the subspace, we have
\[
\widehat{V}(m) = \begin{cases} \frac{|V|}{p^d} \;\; , & m \in V^\perp \\ 0 \;\; , &  m \notin V^\perp \end{cases}.
\] 
\end{lemma}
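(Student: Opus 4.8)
\textbf{Proof plan for Lemma~\ref{Fouriersubspace}.}
The plan is to compute $\widehat{V}(m) = \frac{1}{p^d}\sum_{x \in V}\chi(-x\cdot m)$ directly, treating the sum as a character sum over the abelian group $V$. The key observation is that the map $x \mapsto \chi(-x\cdot m)$ restricted to $V$ is a homomorphism from $(V,+)$ into the unit circle, i.e., a character of $V$. By the standard orthogonality of characters of a finite abelian group, such a character sums to $|V|$ if it is the trivial character of $V$ and to $0$ otherwise. So the entire computation reduces to determining precisely when $x \mapsto \chi(-x\cdot m)$ is trivial on $V$.

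First I would observe that since $\chi$ is a faithful (injective) character of $\mathbb{Z}_p$ — here one uses that $p$ is prime, so $\mathbb{Z}_p$ has no nontrivial proper subgroups and $\chi(u) = e^{2\pi i u/p} = 1$ iff $u \equiv 0$ — the character $x \mapsto \chi(-x\cdot m)$ is trivial on $V$ if and only if $x\cdot m = 0$ for all $x \in V$, which is exactly the statement $m \in V^\perp$. Then I would split into the two cases: if $m \in V^\perp$, the sum is $\sum_{x \in V} 1 = |V|$, giving $\widehat{V}(m) = |V|/p^d$; if $m \notin V^\perp$, the character is nontrivial on $V$, so orthogonality gives $\sum_{x\in V}\chi(-x\cdot m) = 0$, hence $\widehat{V}(m) = 0$. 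To make the orthogonality step self-contained one can pick $x_0 \in V$ with $x_0 \cdot m \neq 0$, note $\chi(-x_0\cdot m) \neq 1$, and use the reindexing $x \mapsto x + x_0$ of the sum over $V$ to conclude $(\chi(-x_0\cdot m) - 1)\sum_{x \in V}\chi(-x\cdot m) = 0$.

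There is no real obstacle here; the only point requiring a little care is the use of primality of $p$ to guarantee that $\chi$ has trivial kernel, so that ``trivial character on $V$'' genuinely coincides with ``$m \perp V$'' rather than merely ``$m\cdot x \in \ker\chi$ for all $x \in V$.'' (Over $\mathbb{Z}_n$ with $n$ composite one would instead get $m \in V^\perp$ as a subgroup condition relative to the annihilator of $\ker\chi$, but that subtlety does not arise in the prime case.) Everything else is the routine finite-abelian-group character orthogonality argument, so the write-up will be short.
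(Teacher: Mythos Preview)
Your proof is correct. The paper takes a slightly different route for the case $m\notin V^\perp$: rather than invoking character orthogonality on $V$ directly, it shows geometrically that $V$ equidistributes over the hyperplane slices $V_t=\{v\in V: v\cdot m=t\}$ by picking $v_t\in V$ with $v_t\cdot m=t\neq 0$ and observing $V_{it}=iv_t+V_0$, then appeals to Lemma~\ref{lemma:main} to conclude $\widehat{V}(m)=0$. Your reindexing argument $x\mapsto x+x_0$ is essentially the same translation trick packaged abstractly as character orthogonality; it is a bit more self-contained (it avoids the detour through Lemma~\ref{lemma:main}), while the paper's version has the virtue of making the equidistribution picture explicit and keeping the argument consistent with the cone machinery used throughout.
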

\begin{proof}
If $m \in V^\perp$, then $m \cdot v = 0$ for all $v \in V$ and so
\[
\widehat{V}(m) = \frac{1}{p^d} \sum_{v\in V} \chi(-m\cdot v) = \frac{|V|}{p^d}.
\]
If $m \notin V^\perp$, then $V$ is not contained in the hyperplane $\{x \in \bb{Z}_p^d : x \cdot m = 0 \}$. We will show that in this case $V$ equidistributes on the $p$ parallel hyperplanes $H_{m,t} = \{ x \, | \, x \cdot m = t \}$, $t=0,1,2,\dots,p-1$. Lemma \ref{lemma:main} implies that $\widehat{V}(m)=0.$

Let 
\[
V_t = \{ v \in V \, | \,  v \cdot m =t\}
\] 
be the intersection of $V$ with the hyperplane $H_{m,t}$. By our assumption $V\neq V_0$, so there exist $0\neq t \in \bb{Z}_p$ and $v_t \in V$ such that $v_t \cdot m = t$. It follows immediately that $V_t = v_t + V_0$ and so that $|V_t|=|V_0|$. Similarly, $V_{2t} = 2v_t + V_0$, which in turn implies $|V_{2t}|=|V_0|$. Repeating the same argument shows that $|V_{it}|=|V_0|$ for all $i \in \bb{Z}_p$, which is equivalent to $|V_t|$ being constant for all $t \in \bb{Z}_p$.  
\end{proof}

We now give a proof of the above proposition.

\begin{proof}[Proof of Proposition \ref{prop:subsptilspectr}]
Suppose first that $(E,V)$ is a tilling pair. We know from Theorem \ref{theorem:basicpicturetiling} that $|E| |V| = p^d$ and that $\widehat{E}(x) \widehat{V}(x)=0$ for all $0 \neq x \in \bb{Z}_p^d$. From this we deduce that $|V^\perp|=|E|$ and that $\widehat{E}(x-x')=0$ for all distinct $x,x\in V^\perp$. By Theorem \ref{theorem:basicpicturespectral}, we then have that $(E,V^\perp)$ is a spectral pair.

The first property is straightforward. $|V^\perp| = p^d / |V| = |E|.$ For the second observe that $V^\perp$ is a subspace and so $x-x' \in V^\perp$. The lemma above implies that $\widehat{V}(x-x') \neq 0$ and so we must have $\widehat{E}(x-x') = 0$.

Conversely now, suppose that $(E,V)$ is a spectral pair. We know from Theorem \ref{theorem:basicpicturespectral} that $|E| = |V|$ and that $\widehat{E}(x-x') =0$ for all distinct $x, x' \in V$. From this we deduce that $|V^\perp| |E| = p^d $ and that $\widehat{E}(x) \widehat{V^\perp}(x)=0$ for all  $0 \neq x\in \bb{Z}_p^d$. By Theorem \ref{theorem:basicpicturetiling}, we get that $(E,V^\perp)$ is a tiling pair.

The first property is once again straightforward. $|V^\perp| |E|  = (p^d / |V|) |E| = p^d.$ For the second observe that if $x \notin V=(V^\perp)^\perp$, then the lemma above implies $V^\perp(x)=0$; if on the other hand $0 \neq x \in V$, then $\widehat{E}(x) = \widehat{E}(x-0)=0$.
\end{proof}

\subsection{Spectral and tiling sets of dimension or codimension one}

The last task of this section is to prove that if a set has size $p$ or $p^{d-1}$, then the properties of being tilling set and a spectral set are equivalent.
\begin{proposition}
\label{prop:porp^d}
Let $E \subseteq \bb{Z}_p^d$ be a set of size $p$ or $p^{d-1}$. $E$ is a spectral set if and only if it is a tiling set. Furthermore this happens if and only if 
the tiling or spectral partner can be chosen to be a subspace and so happens if and only if $E$ is a graph.
\end{proposition}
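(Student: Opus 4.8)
The goal is to prove that, for sets of these two sizes, the four statements ``$E$ tiles,'' ``$E$ is spectral,'' ``$E$ has a subspace partner (tiling or spectral),'' and ``$E$ is a graph'' are all equivalent. The implications from the last one are already available: if $E=\text{Graph}_{W,V}(f)$ for a subspace $V$ and complement $W$, then Proposition~\ref{prop:graph=tile} makes $(E,V)$ a tiling pair and Proposition~\ref{prop:subsptilspectr} then makes $(E,V^\perp)$ a spectral pair, while conversely a subspace tiling partner forces $E$ to be a graph by Proposition~\ref{prop:graph=tile}. So the real content is: \emph{if $E$ tiles, or if $E$ is spectral, then $E$ is a graph.} I would handle the two sizes separately, since the graph is over a hyperplane when $|E|=p^{d-1}$ and over a line when $|E|=p$.

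Suppose $|E|=p^{d-1}$. The first step is the elementary remark that it is enough to find a line $\ell$ through the origin with $\ell\cap\text{Dir}(E)=\emptyset$: distinct elements of $E$ then lie in distinct cosets of $\ell$, and since there are exactly $p^{d-1}=|E|$ such cosets, $E$ meets each once and is therefore the graph of a function from a complement of $\ell$ into $\ell$, so $(E,\ell)$ tiles and $(E,\ell^\perp)$ is spectral by Propositions~\ref{prop:graph=tile} and~\ref{prop:subsptilspectr}. It remains to produce such an $\ell$, i.e.\ to show $\text{DirC}(E)\ne\mathbb{Z}_p^d-\{0\}$. If $(E,A)$ is a tiling pair then $|A|=p$, so $\text{DirC}(A)\ne\emptyset$, and Theorem~\ref{theorem:basicpicturetiling}(h) gives $\text{DirC}(E)\cap\text{DirC}(A)=\emptyset$; hence $E$ omits a direction. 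If $(E,B)$ is a spectral pair then $(B,E)$ is also one (Corollary~\ref{corr:spectral}(a)), so Theorem~\ref{theorem:basicpicturespectral}(f) gives $\widehat{B}(\text{DirC}(E))=0$; were $\text{DirC}(E)=\mathbb{Z}_p^d-\{0\}$, Fourier inversion would make the characteristic function of $B$ constant, contradicting $1\le|B|=p^{d-1}<p^d$.

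Suppose $|E|=p$. Now the useful input is a single nonzero $m_0$ with $\widehat{E}(m_0)=0$: by Lemma~\ref{lemma:main}, $E$ is equidistributed among the $p$ cosets of the hyperplane $m_0^\perp$, so (since $|E|=p$) it meets each such coset exactly once and is the graph of a function from a complementary line into $m_0^\perp$, making $(E,m_0^\perp)$ a tiling pair and $(E,\langle m_0\rangle)$ a spectral pair by Propositions~\ref{prop:graph=tile} and~\ref{prop:subsptilspectr}. To get $m_0$: if $(E,A)$ tiles then $|A|=p^{d-1}<p^d$, so $A\ne\mathbb{Z}_p^d$ and $\widehat{A}(m_0)\ne0$ for some $m_0\ne0$, and then $\widehat{E}(m_0)\widehat{A}(m_0)=0$ forces $\widehat{E}(m_0)=0$; if $(E,B)$ is spectral, choose distinct $b,b'\in B$ (possible since $|B|=p>1$) and take $m_0=b-b'$, for which $\widehat{E}(m_0)=0$ by Theorem~\ref{theorem:basicpicturespectral}(c).

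The step I expect to require the most care is keeping the two regimes distinct: for $|E|=p^{d-1}$ it suffices that $E$ miss a single direction, a purely $1$-dimensional condition coming directly from the nonemptiness of the partner's direction set, whereas for $|E|=p$ one genuinely needs $\widehat{E}$ to vanish at a nonzero point --- equivalently, by Lemma~\ref{lemma:main}, that $E$ be balanced relative to one full hyperplane --- and merely missing a direction would not suffice. A secondary subtlety is that in the spectral case for $|E|=p^{d-1}$ one must first pass to the symmetric pair $(B,E)$ before ``$E$ determines all directions'' becomes a contradiction. The degenerate situations, where $p$ or $p^{d-1}$ is $1$ or $p^d$ (in particular $d\le 1$), are immediate, since singletons and the whole space are graphs.
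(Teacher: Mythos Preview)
Your proof is correct and follows essentially the same route as the paper: in each of the four cases you locate the same object the paper does (a nonzero zero of $\widehat{E}$ when $|E|=p$, a missing direction when $|E|=p^{d-1}$), and from it extract a subspace tiling partner. The only cosmetic difference is that you verify the graph property directly via coset counting, whereas the paper checks the Fourier-side tiling condition $\widehat{E}\,\widehat{V}=0$ and then invokes Proposition~\ref{prop:graph=tile}; these are equivalent verifications of the same fact.
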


\begin{proof}
There are four implications to consider.

\underline{$|E|=p$}. (i) Suppose first that $E$ is a tiling set. We show that $E$ tiles with a hyperplane through the origin  i.e., that there exists $0 \neq v \in \bb{Z}_p^d$ such that $(E,\text{span}(v)^\perp)$ is a tiling pair. Proposition \ref{prop:subsptilspectr} then implies that $E$ is a spectral set, whose spectrum is the span of $v$.

Let $A$ be a tiling partner for $E$. By Theorem \ref{theorem:basicpicturetiling} $\widehat{E}(v) \widehat{A}(v)=0$ for all $0 \neq v \in \bb{Z}_p^d$. Observe that $\widehat{A}$ is not identically zero on $\bb{Z}_p^d - \{0\}$ because $A \neq \bb{Z}_p^d$. So there exists $v \neq 0$ such that $\widehat{A}(v) \neq 0$ and so $\widehat{E}(v)=0$. Let $V = \text{span}(v)^\perp$. If $0\neq m \in V^\perp = \text{span}(v)$, then Lemma \ref{lemma:main} implies that $\widehat{E}(m)=0$; while if $m \notin V^\perp$, then Lemma \ref{Fouriersubspace} implies that $\widehat{V}(m) = 0$. So the product $\widehat{E} \, \widehat{V}$ is zero on $\bb{Z}_p^d - \{0\}$. Moreover, $|E| |V| = p \, p^{d-1} = p^d$. This proves that $(E,V)$ is a tiling pair as desired and we are done in this case.

(ii) Conversely now, suppose that $(E,B)$ is a spectral pair. We show the existence of $b \neq 0$ such that $(E,\text{span}(b)^\perp)$ is a tiling pair. 

By Corollary \ref{corr:spectral} we may translate $B$ so it contains 0. $|B|=|E|=p>1$ and so there exists $0 \neq b \in B$. For any such $b$ we have $\widehat{E}(b)=\widehat{E}(b-0) = 0$. Let $V = \text{span}(b)^\perp$. $(E,V)$ is a tiling pair because it satisfies the two standard properties. Firstly, $|E| |V| = p \, p^{d-1} = p^d$. Secondly, for every $0 \neq m$, either $m \in \text{span}(b)$ (in which case Lemma \ref{lemma:main} implies that $\widehat{E}(m)=0$) or $m \notin \text{span}(b)$ (in which case Lemma \ref{Fouriersubspace} implies $\widehat{V}(m)=0$). So $\widehat{E} \, \widehat{V} = 0$ on $\bb{Z}_p^d-\{0\}$. 

\underline{$|E|=p^{d-1}$}. (i)  Suppose first that $E$ is a tiling set. We will show that $E$ tiles with a line $L$ through the origin as partner. Proposition \ref{prop:subsptilspectr} then implies that $E$ is a spectral set, whose spectrum is $L^\perp$.

Theorem \ref{theorem:basicpicturetiling} (g) implies that $\text{DirC}(E)$, the direction cone of $E$, is not the whole of $\bb{Z}_p^d-\{0\}$. So there exists a line $L$ that is disjoint from it. It then follows from Theorem~\ref{theorem:basicpicturetiling}, that $(E,L)$ is a tiling pair as $\text{DirC}(E) \cap \text{DirC}(L) = \emptyset$.

(ii) Conversely now, suppose that $(E,B)$ is a spectral pair. We show there exists a line $L$ such that $(E,L)$ is a tiling pair.

Corollary \ref{corr:spectral} implies that $(B,E)$ is a spectral pair. Theorem \ref{theorem:basicpicturespectral} (d) implies that $\widehat{B}=0$ on $\text{\text{DirC}}(E)$. As $\widehat{B} \neq 0$ on the whole of $\bb{Z}_p^d - \{0\}$ (else $B=\bb{Z}_p^d$ contradicting $|B|=|E|=p^{d-1})$, there exists a line $L$ disjoint from $\text{DirC}(E)$. $(E,L)$ is a tiling pair following the same argument used in (i).

Note that in all four cases we showed that $E$ has a subspace as a tiling partner. Proposition \ref{prop:graph=tile} implies that $E$ is a graph. So all four hypotheses imply that $E$ is a graph. 
\end{proof}

\subsection{Spectral sets either tile or $k$-tile}

\begin{definition}
Fix $k \geq 1$ A subset $E$ of $\mathbb{Z}_p^d$ is said to $k$-tile with partner $A$ if every vector $\vec{x} \in \mathbb{Z}_p^d$ can be expressed as a sum of an element of $E$ with an element of $A$ in exactly $k$ ways. Thus for example $E$ $1$-tiles if and only if $E$ tiles in the manner previously discussed in this 
paper.

The pair $(E,A)$ is said to be a $k$-tiling pair and this happens if and only if $E \star A = k \mathfrak{1}$ where here $\star$ stands for the discrete convolution and 
$E, A$ now denote the characteristic functions of the corresponding sets. 

As before it is easy to see that this is equivalent to the conditions $\widehat{E}(m)\widehat{A}(m)=0$ for $m \neq 0$ and $|E||A|=kp^d$.
\end{definition}

The reader is warned that the concept of $k$-tiling is pretty weak if $k$ is not constrained. In fact every subset $E \subseteq \mathbb{Z}_p^d$ 
$|E|$-tiles with the whole space as partner. Thus it is only interesting that $(E,A)$ $k$-tiles when $1 \leq k < |E|$ or when the $k$-tiling partner 
is a proper subset of $\mathbb{Z}_p^d$.

We now prove:

\begin{theorem}
\label{theorem:ktiling}
A spectral set $E \subseteq \mathbb{Z}_p^d$ either $1$-tiles or it $\frac{|E|}{p}$-tiles with a hyperplane partner.
\end{theorem}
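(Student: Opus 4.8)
The plan is to control $\widehat{E}$ along lines through the origin and feed this into the convolution criterion for $k$-tiling. First I would handle the degenerate sizes directly: if $|E| = 1$ then $E$ is a singleton and $1$-tiles with partner $\mathbb{Z}_p^d$, and if $|E| = p^d$ then $E = \mathbb{Z}_p^d$ and $1$-tiles with partner $\{\vec{0}\}$; in either case $E$ $1$-tiles and there is nothing further to prove. So assume $1 < |E| < p^d$. By Corollary~\ref{corr:spectralsize}(c) we have $p \mid |E|$, so $|E| = mp$ with $m = |E|/p$ a positive integer.

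Next I would locate the hyperplane candidate. Let $B$ be a spectrum of $E$; since $|B| = |E| > 1$ there are distinct $b, b' \in B$, and Theorem~\ref{theorem:basicpicturespectral}(c) gives $\widehat{E}(v) = 0$ for $v := b - b' \neq \vec{0}$. Set $H = \mathrm{span}(v)^{\perp}$, a hyperplane through the origin, so that $|H| = p^{d-1}$.

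The main step is then to verify that $(E, H)$ is an $m$-tiling pair, using the characterization (from the definition preceding the theorem) that $(E,A)$ is a $k$-tiling pair if and only if $|E|\,|A| = k p^d$ and $\widehat{E}(x)\widehat{A}(x) = 0$ for all $x \neq \vec{0}$. The size identity is immediate: $|E|\,|H| = mp \cdot p^{d-1} = m p^d$. For the Fourier condition, Lemma~\ref{Fouriersubspace} tells us $\widehat{H}(x) = 0$ unless $x \in H^{\perp} = \mathrm{span}(v)$; and for $\vec{0} \neq x \in \mathrm{span}(v)$, writing $x = rv$ with $r \neq 0$, Lemma~\ref{lemma:main} (equivalence of (1) and (3)) upgrades $\widehat{E}(v) = 0$ to $\widehat{E}(rv) = 0$. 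Hence $\widehat{E}\,\widehat{H}$ vanishes off the origin, and $(E, H)$ is an $m$-tiling pair with $m = |E|/p$, as desired.

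I do not expect a real obstacle here: the argument in fact proves the slightly stronger statement that \emph{every} spectral set of size $> 1$ already $\tfrac{|E|}{p}$-tiles with a hyperplane partner, whether or not it also $1$-tiles, so the ``either/or'' in the statement should be read inclusively. The only points requiring care are the bookkeeping that $m = |E|/p$ is exactly the correct multiplicity, and the (immediate) observation that a spectrum with two or more elements forces $Z(\widehat{E})$ to contain a nonzero vector.
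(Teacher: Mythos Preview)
Your proof is correct and follows essentially the same route as the paper: pick a nonzero $v$ in the zero set of $\widehat{E}$ coming from the spectrum, take $H=\mathrm{span}(v)^\perp$, and check the Fourier/size conditions for $\tfrac{|E|}{p}$-tiling. Your treatment is in fact a bit more careful in citing Lemma~\ref{Fouriersubspace} and Lemma~\ref{lemma:main} explicitly and in observing that the ``either/or'' is inclusive.
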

\begin{proof}
Let $(E,B)$ be a spectral pair in $\mathbb{Z}_p^d$. If $|E|=1$ then $E$ is a singleton set and hence 1-tiles so assume $|E|=|B| > 1$.
Thus $\widehat{E}(u)=0$ for some nonzero $u$ of the form $b-b'$ with $b, b' \in B$ distinct by Theorem~\ref{theorem:basicpicturespectral}. 
Letting $H$ denote the hyperplane through the origin perpendicular to $u$, we see that $\widehat{H}$ is supported on the line through $u$ on which 
$\widehat{E}$ vanishes away from the origin. Thus $\widehat{H}(m)\widehat{E}(m)=0$ for all nonzero $m$. As $|H||E|=\frac{|E|}{p} p^d$ we conclude 
that $(E,H)$ is a $\frac{|E|}{p}$-tiling pair as claimed.
\end{proof}

This has the following interesting corollary:

\begin{corollary}
\label{corollary:projection}
Let $E \subseteq \mathbb{Z}_p^d$ be a spectral set. Then either $E$ is a tiling set or there exists $m=\frac{|E|}{p} \leq p^{d-1}$ and a 
tiling set $E' \subseteq \mathbb{Z}_p^d \times \mathbb{Z}_m$ such that projection $\pi: \mathbb{Z}_p^d \times \mathbb{Z}_m \to \mathbb{Z}_p^d$ 
takes $E'$ bijectively to $E$. Thus every spectral set is either a tiling set or is the bijective projection of a tiling set.
\end{corollary}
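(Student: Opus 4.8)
The plan is to obtain Corollary~\ref{corollary:projection} as a direct consequence of Theorem~\ref{theorem:ktiling}. If $E$ tiles $\mathbb{Z}_p^d$ we are in the first alternative and there is nothing to do; note this already absorbs the degenerate possibilities $|E|=1$ and $|E|=p^d$, since a singleton and the whole space both tile. So I would assume that $E$ does not tile. Then $|E|>1$, hence by Corollary~\ref{corr:spectralsize} we may write $|E|=pm$ with $m=|E|/p$ a positive integer, and trivially $m=|E|/p\le p^{d}/p=p^{d-1}$. By Theorem~\ref{theorem:ktiling}, $E$ then $m$-tiles $\mathbb{Z}_p^d$ with a hyperplane partner $H$ through the origin.

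The first real step is to rephrase the $m$-tiling condition combinatorially. Because $H$ is a subgroup, for every $x$ we have $E\star H(x)=\#\{e\in E: x-e\in H\}=|E\cap(x+H)|$, the number of elements of $E$ lying in the coset $x+H$. Thus $E\star H=m\mathfrak{1}$ says exactly that each of the $p$ cosets $\kappa_0,\dots,\kappa_{p-1}$ of $H$ meets $E$ in precisely $m$ points. For each $i$ I would fix an enumeration $E\cap\kappa_i=\{e_{i,0},\dots,e_{i,m-1}\}$ and define $f:E\to\mathbb{Z}_m$ by $f(e_{i,j})=j$; by construction $f$ restricts to a bijection $E\cap\kappa_i\to\mathbb{Z}_m$ for every $i$.

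Next I would set $E'=\text{Graph}(f)=\{(e,f(e)):e\in E\}\subseteq\mathbb{Z}_p^d\times\mathbb{Z}_m$ and claim that the translates of $E'$ by the elements of $A'=H\times\{0\}$ partition $\mathbb{Z}_p^d\times\mathbb{Z}_m$. Cardinalities match: $|E'|\,|A'|=pm\cdot p^{d-1}=p^{d}m=|\mathbb{Z}_p^d\times\mathbb{Z}_m|$, so it suffices to check that every $(x,c)$ has at most one representation $(e,f(e))+(h,0)$ with $e\in E$, $h\in H$. Such a representation forces $h=x-e\in H$, i.e.\ $e\in E\cap(x+H)=E\cap\kappa_i$ where $\kappa_i$ is the coset of $x$; among these $m$ candidates for $e$ the requirement $f(e)=c$ singles out exactly one, after which $h=x-e$ is determined. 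Hence $E'$ tiles $\mathbb{Z}_p^d\times\mathbb{Z}_m$. Finally $e\mapsto(e,f(e))$ is a bijection $E\to E'$ whose inverse is the restriction of the coordinate projection $\pi$, so $\pi$ carries $E'$ bijectively onto $E$, which is the second alternative.

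The computations above are routine bookkeeping; the one point that needs care — and the natural place to go wrong — is the choice of tiling partner upstairs: the obvious guess $H\times\mathbb{Z}_m$ has the wrong cardinality, and the correct move is to pin the last coordinate, using $H\times\{0\}$, so that the $m$-fold overlap downstairs is spread out along the new $\mathbb{Z}_m$ direction by the function $f$ and becomes an honest tiling. I would also take care at the outset to spell out why $|E|\in\{1,p^d\}$ is covered by the first alternative, so that $m=|E|/p$ is a well-defined positive integer in the remaining case.
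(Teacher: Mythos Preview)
Your proof is correct and follows essentially the same route as the paper's: invoke Theorem~\ref{theorem:ktiling} to get an $m$-tiling of $E$ by a hyperplane $H$, read off that each coset of $H$ contains exactly $m$ points of $E$, label these points by $\mathbb{Z}_m$ to define $f$, and verify that $\text{Graph}(f)$ tiles $\mathbb{Z}_p^d\times\mathbb{Z}_m$ with partner $H\times\{0\}$. The only cosmetic difference is that you extract the equidistribution directly from the convolution identity $E\star H=m\mathfrak{1}$, whereas the paper appeals to the Fourier vanishing $\widehat{E}(u)=0$ and Lemma~\ref{lemma:main}; these are equivalent observations.
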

\begin{proof}
By Theorem~\ref{theorem:ktiling} either $E$ tiles (in which case we are done) or it $m=\frac{|E|}{p}$-tiles with a hyperplane partner $H$.

In this second case recall that $H$ can be taken to be the hyperplane through the origin perpendicular to nonzero vector $u$ with 
$\widehat{E}(u)=0$. If $H=H_0, H_1, \dots, H_{p-1}$ is a labelling of the $p$ parallel hyperplanes to $H$ then $E$ has exactly $m=\frac{|E|}{p}$ points in each 
$H_j$ as it equidistributes on this family. We can thus choose bijections $\theta_j: E \cap H_j \to \mathbb{Z}_m$ for $0 \leq j \leq p-1$. We 
can define $f: E \to \mathbb{Z}_m$ to be the unique function which restricts to $\theta_j: E \cap H_j \to \mathbb{Z}_m$ for $0 \leq j \leq p-1$.

Finally define $E' = \{ (e, f(e)) | e \in E \} \subseteq \mathbb{Z}_p^d \times \mathbb{Z}_m$. Clearly the projection $\pi$ takes $E'$ bijectively 
to $E$. View $H$ as a subset of $\mathbb{Z}_p^d \times \mathbb{Z}_m$ via inclusion with zero last coordinate. We claim that 
$(E',H)$ is a tiling pair in $\mathbb{Z}_p^d \times \mathbb{Z}_m$. Note $|E'|=|E|$ so $|E'||H|=|E|p^{d-1}=mp^d = |\mathbb{Z}_p^d \times \mathbb{Z}_m|$ 
so the order condition for tiling is satisfied.

Given $(\vec{x},t) \in \mathbb{Z}_p^d \times \mathbb{Z}_m$ we first note that $\vec{x}$ lies in a unique $H_j$. As $\theta_j: E \cap H_j \to \mathbb{Z}_m$ 
is a bijection there is a unique element $\vec{e} \in E$ such that $\vec{e} \in H_j$ and $f(\vec{e})=t$. Since $\vec{x}, \vec{e}$ lie in the same coset $H_j$ of $H$, we have 
$\vec{x}-\vec{e} = \vec{h} \in H$. Thus $(\vec{x},t) = (\vec{e},t) + (\vec{h},0) = (\vec{e},f(\vec{e})) + (\vec{h},0)$ is a sum of an element of $E'$ and $H$. This combined with the order condition shows 
that $(E', H)$ is a tiling pair in $\mathbb{Z}_p^d \times \mathbb{Z}_m$ as desired and the proof is complete.

\end{proof}

\section{Proof of Theorem~\ref{theorem:main1}}

\begin{itemize}
\item[(a)] is proved in the opening paragraph of Section \ref{sect:tiling}.
\item[(b)] is proved in Corollary \ref{corr:spectralsize}.
\item[(c)] is proved in Corollary \ref{corollary:multipleof4}.
\item[(d)] is proved in Proposition \ref{prop:subsptilspectr} and Proposition \ref{prop:graph=tile}.
\item[(e)] is proved in Proposition \ref{prop:porp^d}.
\item[(f)] In $\bb{Z}_p$ parts (a) and (b), and Corollary  \ref{corr:spectralsize} imply that the only spectral or tiling sets are singletons or $\bb{Z}_p$. We now turn our attention to $\bb{Z}_p^2$. Part (a) implies that a tiling set has size $1$ or $p$ or $p^2$. Corollary \ref{corr:spectralsize} and Corollary \ref{corollary:spectralgaphighend} imply that the same is true for spectral sets. Singletons and the whole of $\bb{Z}_p^2$ are both tiling and spectral sets.  Part (e) implies that a set of size $p$ is a tiling set if and only if it is a spectral set. In all cases the set is also a graph set.
\item[(g)] Part (a) implies that a tiling set has size $1$ or $p$ or $p^2$ or $p^3$. Singletons and the whole of $\bb{Z}_p^3$ are spectral sets. By Part (e) a tiling set of size $p$ or $p^2$ is a spectral set. Furthermore in all cases the set is also a graph set.
\item[(h)] is proved in Theorem~\ref{theorem:ktiling}.
\item[(i)] is proved in Corollary~\ref{corollary:projection}.
\end{itemize}

\section{Construction and rank determination of a $2p \times 2p$ log-Hadamard matrix over $\mathbb{Z}_p$ and proof of 
Theorem~\ref{theorem:main2}}

When $p$ is an odd prime, an example of a $2p \times 2p$ Hadamard matrix whose entries were $p$th roots of unity was constructed in \cite{B62}. 
In this section we analyze (a scaling) of the corresponding log-Hadamard matrix with entries in $\mathbb{Z}_p$ and show it has rank $d$ with $4 \leq d \leq 5$. 
This in turn implies that there exist spectral sets of size $2p$ (which hence cannot tile) in $\mathbb{Z}_p^d$ when $d \geq 5$. 
As this analysis also involves the check that this matrix is log-Hadamard we do that also for completeness as our notation differs significantly from that 
found in \cite{B62}.

First we provide the definition of the $2p \times 2p$ matrix $\mathbb{L}$ with entries in $\mathbb{Z}_p$. Let $p$ be and odd prime, $q=\frac{p-1}{2}$ and 
$n$ any nonsquare modulo $p$. Note $q, n \neq 0$ in $\mathbb{Z}_p$ and $2q=-1 \text{ mod } p$. Furthermore note that in the partition $\mathbb{Z}_p = \{ 0 \} \cup S \cup N$ into zero, the nonzero squares, and the non squares, multiplication by $n$ induces a bijection between the sets $S$ and $N$. In the following we will 
exclusively use $i$ for the index of rows of a matrix and $j$'s for the index of columns of a matrix. We will also start our indexing of rows and columns at 
zero so the zeroth row is the topmost row and the zeroth column the leftmost column of the matrix. Thus the indexes $i,j$ of our $2p \times 2p$ matrix range from 
$0$ to $2p-1$ though their mod $p$ values range through $0$ to $p-1$ twice consecutively.
We define
$$
\mathbb{L} = \begin{bmatrix} \mathbb{A} & n\mathbb{A} \\ \mathbb{B} & \mathbb{C} \end{bmatrix}
$$
where $\mathbb{A}_{ij} = 2ij-i^2, \mathbb{B}_{ij}=(j-ni)^2, \mathbb{C}_{ij}=n(i-j)^2$ are $p \times p$ matrices with entries in $\mathbb{Z}_p$ and 
$i,j$ range from $0$ to $p-1$. We will refer to the first $p$ rows of $\mathbb{L}$ as the ``top rows'' of $\mathbb{L}$ and the last $p$ rows as the ``bottom rows''.
Notice the topmost (0th) row of $\mathbb{L}$ is the zero (row) vector which will imply the other rows are balanced vectors once we verify that 
$\mathbb{L}$ is log-Hadamard.

\begin{lemma} 
\label{lemma:tophalf}
The matrix $\mathbb{A}$ is log-Hadamard. The difference of two distinct ``top rows'' of $\mathbb{L}$ is a balanced vector 
in the span of the two vectors \\
$\begin{bmatrix} 0, 1, 2, \dots, p-1 | 0 \cdot n, 1 \cdot n, 2 \cdot n, \dots, (p-1) \cdot n \end{bmatrix}$ and $\begin{bmatrix} 1, \dots, 1 | n, \dots, n \end{bmatrix}$
where we use a $|$ to denote the midpoint of the $2p$-dimensional vectors. In particular all the ``top rows'' of the matrix $\mathbb{L}$ lie in the span of these 
two vectors. 
\end{lemma}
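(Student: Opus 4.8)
The plan is to reduce the whole lemma to two elementary observations: that for a nonzero scalar $a \in \mathbb{Z}_p$ the affine map $j \mapsto aj + b$ permutes $\mathbb{Z}_p$ (this uses that $p$ is odd, so that $2$ is invertible, together with $n \neq 0$), and that the set of balanced vectors is a cone, hence closed under multiplication by nonzero scalars. Once the right parametrization of the top rows is in hand, everything is mechanical. Throughout, write $\vec{u}$ and $\vec{v}$ for the two $2p$-dimensional vectors appearing in the statement, so that $\vec{u}$ has left half $(0,1,\dots,p-1)$ and right half $(0,n,\dots,(p-1)n)$, while $\vec{v}$ has left half $(1,\dots,1)$ and right half $(n,\dots,n)$.

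First I would handle $\mathbb{A}$ directly. Its $i$-th row is the function $j \mapsto 2ij - i^2$, so the difference of the $i$-th and $i'$-th rows is $j \mapsto 2(i-i')j - (i^2 - i'^2) = (i-i')\bigl(2j - (i+i')\bigr)$. When $i \neq i'$ we have $i - i' \neq 0$ in $\mathbb{Z}_p$, and $j \mapsto 2j - (i+i')$ is a permutation of $\mathbb{Z}_p$ since $2 \neq 0$; composing with multiplication by $i - i'$ still gives a bijection $\mathbb{Z}_p \to \mathbb{Z}_p$, hence a balanced vector. Thus $\mathbb{A}$ is log-Hadamard. (The same computation on columns gives the difference of columns $j, j'$ as $i \mapsto 2(j-j')i$, so the columns are balanced too, consistent with Theorem~\ref{theorem:basicpicturespectral}.)

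The key step is the identity: the $i$-th top row of $\mathbb{L}$ equals $2i\,\vec{u} - i^2\,\vec{v}$. This is a direct coordinate check: in the left half the $j$-th entry of $2i\vec{u} - i^2\vec{v}$ is $2i \cdot j - i^2 = 2ij - i^2 = \mathbb{A}_{ij}$, and in the right half it is $2i \cdot (jn) - i^2 n = n(2ij - i^2) = (n\mathbb{A})_{ij}$, matching the top block $[\,\mathbb{A}\ \ n\mathbb{A}\,]$ of $\mathbb{L}$. In particular every top row of $\mathbb{L}$ is a $\mathbb{Z}_p$-linear combination of $\vec{u}$ and $\vec{v}$, which is the final assertion of the lemma.

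It remains to show that the difference of two distinct top rows is balanced. Subtracting the two expressions, the difference of the $i$-th and $i'$-th top rows is $2(i-i')\vec{u} - (i^2 - i'^2)\vec{v} = (i-i')\bigl(2\vec{u} - (i+i')\vec{v}\bigr)$, which is visibly in $\mathrm{span}(\vec{u},\vec{v})$. Since the set of balanced $2p$-vectors is a cone and $i - i' \neq 0$, it suffices to show that $2\vec{u} - c\vec{v}$ is balanced for every $c \in \mathbb{Z}_p$. But its left half is the function $j \mapsto 2j - c$, a permutation of $\mathbb{Z}_p$, and its right half is $j \mapsto 2jn - cn = n(2j - c)$, again a permutation of $\mathbb{Z}_p$ (using $n \neq 0$); hence each element of $\mathbb{Z}_p$ occurs exactly twice among the $2p$ coordinates of $2\vec{u} - c\vec{v}$, so it is balanced. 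I expect the only genuinely creative point to be spotting the identity "$i$-th top row $= 2i\vec{u} - i^2\vec{v}$" (equivalently, the completed-square form $\mathbb{A}_{ij} = j^2 - (j-i)^2$); after that the argument is just the two bijectivity facts above.
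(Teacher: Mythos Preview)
Your proof is correct and follows essentially the same approach as the paper: both write the difference of two top rows as $2(i-i')\vec{u}-(i^2-i'^2)\vec{v}$ and verify balancedness by observing that each half is an affine bijection of $\mathbb{Z}_p$. The only cosmetic difference is that you first establish the identity ``$i$-th top row $=2i\vec{u}-i^{2}\vec{v}$'' and deduce the difference from it, whereas the paper computes the difference directly and then recovers the row identity from the fact that the $0$th row is zero.
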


\begin{proof}
We will use the notation $\begin{bmatrix} f(j) | g(j) \end{bmatrix}$ as shorthand for a $2p$-dimensional vector whose first $p$ coordinates are given by the formula 
$f(j)$ as the column index $j$ varies over $0, \dots, p-1$ and whose last $p$ coordinates are given by the formula $g(j)$ as the column index $j$ varies 
over $0, \dots, p-1$. Thus $[j | j]$ encodes the vector $[0, 1, 2, \dots, p-1 | 0, 1, 2, \dots, p-1]$ and $[1 | n]$ the vector $[1,1,\dots,1 | n, n, \dots, n]$ for example.

Then the difference of  ``top row'' $i$ and ``top row'' $i'$ of the matrix $\mathbb{L}$ is $[f(j) | n f(j)]$ where 
$f(j)= 2ij-i^2-(2i'j-(i')^2) = 2(i-i')j-(i^2-(i')^2)$. This means that it is equal to $2(i-i')[j | nj] - (i^2-(i')^2)[1|n]$ and so lies in the span of the two vectors 
$[j | nj]$ and $[1|n]$ as claimed. Finally as $(i-i')$ is nonzero, $(i-i')[j|nj]$ has both halves balanced $p$-dimensional vectors and adding any multiple of 
$[1 | n]$ to this does not change this. So the difference of these two distinct top rows is balanced as claimed.  

Finally as the topmost row of $\mathbb{L}$ is the zero vector, any other ``top row'' of $\mathbb{L}$ can be viewed as the difference of itself and the topmost row 
and so lies in the span of the vectors $[1|n]$ and $[j | nj]$.
\end{proof}

\begin{lemma}
\label{lemma:bottomhalf}
The difference of two distinct ``bottom rows" of $\mathbb{L}$ is a balanced vector in the span of the two vectors 
$[j|j]$ and $[n|1]$. Thus the ``bottom rows'' lie in the span of the three vectors $[j|j], [n|1]$ and $[j^2|nj^2]$. 
\end{lemma}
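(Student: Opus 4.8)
The plan is to follow exactly the template of the proof of Lemma~\ref{lemma:tophalf}, reusing the shorthand $[\,f(j)\,|\,g(j)\,]$ for the $2p$-dimensional vector whose first $p$ coordinates are $f(0),\dots,f(p-1)$ and whose last $p$ coordinates are $g(0),\dots,g(p-1)$ as the column index $j$ ranges over $0,\dots,p-1$. In this notation the $i$th ``bottom row'' of $\mathbb{L}$ is $[\,(j-ni)^2\,|\,n(j-i)^2\,]$, since $\mathbb{B}_{ij}=(j-ni)^2$ and $\mathbb{C}_{ij}=n(i-j)^2=n(j-i)^2$.

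First I would compute the difference of ``bottom row'' $i$ and ``bottom row'' $i'$. Expanding the squares, the first-half entry at column $j$ becomes $-2n(i-i')j + n^2(i^2-(i')^2)$ and the second-half entry becomes $-2n(i-i')j + n(i^2-(i')^2)$. The key point — the analogue of the cancellation exploited in Lemma~\ref{lemma:tophalf} — is that both halves have the \emph{same} coefficient $-2n(i-i')$ of $j$, while their constant terms differ by exactly the factor $n$, which is precisely the ratio of the two constant blocks of $[\,n\,|\,1\,]$. Hence
\[
(\text{bottom row } i) - (\text{bottom row } i') \;=\; -2n(i-i')\,[\,j\,|\,j\,] \;+\; n\bigl(i^2-(i')^2\bigr)\,[\,n\,|\,1\,],
\]
so this difference lies in the span of $[\,j\,|\,j\,]$ and $[\,n\,|\,1\,]$. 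To see it is balanced, note that since $i\neq i'$ in $\mathbb{Z}_p$ and $2,n\neq 0$, the coefficient $-2n(i-i')$ is nonzero, so each half is a nonzero scalar multiple of $(0,1,\dots,p-1)$ translated by a constant; this is a bijection $\mathbb{Z}_p\to\mathbb{Z}_p$, so each element of $\mathbb{Z}_p$ occurs once in each half and therefore exactly twice in the whole $2p$-vector, i.e. the difference is a balanced vector.

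For the last assertion, I would observe that ``bottom row'' $0$ is $[\,j^2\,|\,nj^2\,]$, and that every ``bottom row'' $i$ equals ``bottom row'' $0$ plus the difference computed above, hence lies in $[\,j^2\,|\,nj^2\,] + \operatorname{span}\bigl([\,j\,|\,j\,],[\,n\,|\,1\,]\bigr)$; equivalently one can simply expand $(j-ni)^2 = j^2 - 2nij + n^2 i^2$ and $n(j-i)^2 = nj^2 - 2nij + ni^2$ to get the explicit identity
\[
(\text{bottom row } i) \;=\; [\,j^2\,|\,nj^2\,] \;-\; 2ni\,[\,j\,|\,j\,] \;+\; ni^2\,[\,n\,|\,1\,].
\]
Either way, all ``bottom rows'' lie in the span of $[\,j\,|\,j\,]$, $[\,n\,|\,1\,]$ and $[\,j^2\,|\,nj^2\,]$, as claimed.

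There is no genuine obstacle here; the proof is a bookkeeping computation. The only subtlety worth flagging is the compatibility of the constant terms across the two halves — the factor-of-$n$ matching — since that is exactly what forces the difference into the two-dimensional span $\operatorname{span}([\,j\,|\,j\,],[\,n\,|\,1\,])$ rather than a larger space, and it reflects the specific normalization in this construction whereby the $\mathbb{B}$-block uses $(j-ni)^2$ while the $\mathbb{C}$-block uses $n(i-j)^2$.
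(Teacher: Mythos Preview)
Your proof is correct and follows essentially the same approach as the paper: a direct expansion of the difference of two bottom rows, identification of the common $j$-coefficient and the factor-of-$n$ relation between the constant terms, and then writing an arbitrary bottom row as the $0$th bottom row $[j^2\,|\,nj^2]$ plus such a difference. In fact your coefficients are slightly cleaner than the paper's (which drops a factor of $2$ in the $[j\,|\,j]$ coefficient, a harmless typo since only nonvanishing matters).
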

\begin{proof}
The difference between the $i$th ``bottom row" and the $i'$th ``bottom row" is \\ $[(j-ni)^2-(j-ni')^2 | n(i-j)^2-n(i'-j)^2]$. This simplifies to 
$[n(i'-i)j-n^2((i')^2-i^2) | n(i'-i)j - n((i')^2-i^2) ] =n(i'-i)[j|j]-n((i')^2-i^2)[n | 1]$. Thus the difference of two distinct ``bottom rows" lies in the span 
of the two vectors $[j|j]$ and $[n|1]$. Furthermore as $n(i'-i)$ is nonzero, $n(i'-i)[j|j]$ is balanced (in each half) and adding any multiple of $[n|1]$ to this 
does not change this which establishes the first statement of the lemma. The second statement follows as any ``bottom row" can be written as 
the 0th ``bottom row'' $[j^2 | nj^2] = [j^2 | nj^2]$ plus the difference of itself and the 0th ``bottom row'' which is a linear combination of 
$[n|1]$ and $[j|j]$.
\end{proof}

\begin{corollary}
\label{corollary:Hadamardrank}
If $d$ is the rank of $\mathbb{L}$ then $4 \leq d \leq 5$. The rowspace of $\mathbb{L}$ lies in the span of the $5$ vectors 
$[j|nj],[1|n],[j|j],[n|1]$ and $[j^2|nj^2]$.
\end{corollary}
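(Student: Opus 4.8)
The plan is to establish the two-sided bound on $d = \mathrm{rank}(\mathbb{L})$ by treating the upper bound and the lower bound separately, both as immediate consequences of Lemmas~\ref{lemma:tophalf} and \ref{lemma:bottomhalf}. For the upper bound $d \leq 5$, I would simply assemble the conclusions of the two preceding lemmas: Lemma~\ref{lemma:tophalf} shows every ``top row'' of $\mathbb{L}$ lies in $\mathrm{span}\{[j|nj],[1|n]\}$, and Lemma~\ref{lemma:bottomhalf} shows every ``bottom row'' lies in $\mathrm{span}\{[j|j],[n|1],[j^2|nj^2]\}$. Hence all $2p$ rows of $\mathbb{L}$ lie in the span of the five named vectors $[j|nj], [1|n], [j|j], [n|1], [j^2|nj^2]$, so the row space has dimension at most $5$, giving $d \leq 5$.

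For the lower bound $d \geq 4$, the idea is to exhibit four linearly independent rows (or, more conveniently, to argue about the row space directly). I would first note that $\mathbb{L}$ is log-Hadamard — this is what the section is building toward and I will take as established in the surrounding text (it follows from Lemmas~\ref{lemma:tophalf}, \ref{lemma:bottomhalf} and a parallel check for the mixed row differences) — so in particular no two distinct rows are equal and each nonzero row is balanced. The cleanest route: pick the top row $i$ with $i \neq 0$, which by Lemma~\ref{lemma:tophalf} equals $2i[j|nj] - i^2[1|n]$, and a bottom row, which involves the vector $[j^2|nj^2]$ together with $[j|j]$ and $[n|1]$; then observe that $[j|nj]$, $[1|n]$, $[j|j]$ and a fourth vector genuinely appearing among the bottom rows are linearly independent as functions of $j \in \mathbb{Z}_p$ (using $p \geq 3$ so that $1, j, j^2$ are linearly independent in the space of functions $\mathbb{Z}_p \to \mathbb{Z}_p$, i.e. the Vandermonde determinant is nonzero). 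Concretely, I would show that the difference of two distinct top rows, the difference of two distinct bottom rows, a single top row, and a single bottom row together span a $4$-dimensional space, using that $[1|n]$ and $[n|1]$ are not parallel (since $n^2 \neq 1$ as $n$ is a nonsquare, hence $n \neq \pm 1$), and that $[j|nj]$ and $[j|j]$ are not parallel (again since $n \neq 1$), and that $[j^2|nj^2]$ is independent of all of $[1|n],[n|1],[j|j],[j|nj]$ because its coordinates are genuinely quadratic in $j$ on each half.

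The main obstacle I anticipate is the lower bound, specifically being careful about which combinations of the five spanning vectors are actually realized by rows of $\mathbb{L}$ and verifying their independence cleanly. The five vectors $[j|nj],[1|n],[j|j],[n|1],[j^2|nj^2]$ are themselves linearly independent when $p \geq 5$ (and one should check what happens at $p=3$, where $j^2 = j^p/j$-type coincidences could in principle reduce the count — but here $j^2$ for $j=0,1,2$ gives $0,1,1$, so $[j^2|nj^2]$ is still independent of the linear and constant vectors), so the honest statement is that $d$ could a priori be anywhere from the number of independent rows up to $5$; the lemma only claims $d \geq 4$, so it suffices to produce four independent rows. I would produce them as: the zeroth bottom row $[j^2|nj^2]$, the difference of two distinct bottom rows (a nonzero combination of $[j|j]$ and $[n|1]$, and for suitable choice of $i,i'$ this is not parallel to either), a nonzero top row (a combination of $[j|nj]$ and $[1|n]$), and the difference of two distinct top rows (again a combination, chosen not parallel to the chosen top row). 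Checking that these four are independent reduces to a small linear-algebra computation over $\mathbb{Z}_p$ exploiting $n \neq 0, 1, -1$ and the Vandermonde independence of $\{1, j, j^2\}$; I would spell this out but it is routine. The upper bound is essentially free given the two lemmas.
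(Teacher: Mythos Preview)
Your upper bound argument is correct and matches the paper's exactly.

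For the lower bound you make a genuine error: you assert that ``$n^2 \neq 1$ as $n$ is a nonsquare, hence $n \neq \pm 1$'' and then invoke $n \neq -1$ in your independence check. This is false. When $p \equiv 3 \pmod 4$, the element $-1$ \emph{is} a nonsquare in $\mathbb{Z}_p$, and in fact $n=-1$ is precisely the choice made in the very next corollary (Corollary~\ref{corollary:Hadamardrank2}) to force the rank down to $4$. In that case $[1|n]$ and $[n|1]$ \emph{are} parallel, so your stated reason for independence collapses. As it happens, the four rows you actually propose to exhibit are still independent even when $n=-1$: the difference of two bottom rows is $\alpha[j|j]+\beta[n|1]$ with $\alpha\neq 0$, and $[j|j]\notin\mathrm{span}\{[j|nj],[1|n]\}$ only needs $n\neq 1$, while $[j^2|nj^2]$ is independent of anything with affine-in-$j$ halves. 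So your scheme can be repaired, but the argument as written invokes a false claim.

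The paper's route is cleaner and sidesteps this issue entirely: it never touches $[n|1]$ in the lower-bound argument. It simply shows that the four vectors $[j|j]$, $[j|nj]$, $[1|n]$, $[j^2|nj^2]$ are linearly independent (using only $n\neq 1$ for the first two, the vanishing leftmost coordinate for the third, and the fact that $j\mapsto j^2$ has image of size $\tfrac{p+1}{2}$ and so is not affine for the fourth). Since these four are visibly in the row space, $d\ge 4$. This avoids any case analysis on whether $-1$ is a square.
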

\begin{proof}
The last part follows immediately from Lemmas~\ref{lemma:tophalf} and ~\ref{lemma:bottomhalf}. From this it follows that $d \leq 5$.
As $n \neq 1$, $[j|j]$ and $[j,nj]$ are linearly independent and as the span of these two vectors consists of vectors whose leftmost coordinate is zero, 
we see that $[1|n]$ is independent from these two. Finally, to see that $[j^2 | nj^2]$ is independent from the set of three vectors $\{ [j|j],[j|nj],[1|n] \}$, 
note that in the first half of the any linear combination of these three vectors is a linear function of $j$ which cannot equal the quadratic function $j^2$ 
as $f(j)=j^2$ has image size $\frac{p+1}{2}$ as a function $\mathbb{Z}_p \to \mathbb{Z}_p$ whereas a linear function has image size $p$ or $1$. Thus $d \geq 4$ always.
\end{proof}

\begin{corollary}
\label{corollary:Hadamardrank2}
When $p \equiv 3$ mod $4$ and we select $n=-1$, then the rank of $\mathbb{L}$  is four.
\end{corollary}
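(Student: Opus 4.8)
The plan is to read off the conclusion from the structural information already assembled in Corollary~\ref{corollary:Hadamardrank}: the rowspace of $\mathbb{L}$ always sits inside the span of the five vectors $[j|nj],[1|n],[j|j],[n|1],[j^2|nj^2]$, and the rank $d$ always satisfies $4 \le d \le 5$. So to pin $d$ down to $4$ it suffices to exhibit a value of $n$ for which one of these five vectors becomes redundant, and $n=-1$ is exactly such a value.

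The steps I would carry out, in order, are as follows. First, observe that the hypothesis $p \equiv 3 \bmod 4$ is precisely the condition that makes $-1$ a nonsquare modulo $p$; hence $n = -1$ is an admissible choice in the definition of $\mathbb{L}$, and all of Lemma~\ref{lemma:tophalf}, Lemma~\ref{lemma:bottomhalf} and Corollary~\ref{corollary:Hadamardrank} apply verbatim. Second, specialize the five spanning vectors to $n=-1$: they become $[j|{-j}],\,[1|{-1}],\,[j|j],\,[-1|1],\,[j^2|{-j^2}]$. Third, note the linear dependence $[-1|1] = -[1|{-1}]$, i.e. $[n|1] = -[1|n]$ when $n=-1$; therefore the rowspace of $\mathbb{L}$ is contained in the span of the four vectors $[j|{-j}],\,[1|{-1}],\,[j|j],\,[j^2|{-j^2}]$, so $d \le 4$. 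Fourth, combine this with the bound $d \ge 4$ from Corollary~\ref{corollary:Hadamardrank} to conclude $d = 4$.

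There is essentially no obstacle here: the entire content of the earlier lemmas was to trap the rowspace in a five-dimensional space and to force $d\ge 4$, and the present corollary just records the fortunate coincidence $[n|1] = -[1|n]$ that occurs exactly at the admissible value $n=-1$. If one wishes, one may additionally remark (it is not needed for the proof) that the four surviving vectors are in fact independent — $[j|j]$ and $[j|{-j}]$ are independent because $p$ is odd, $[1|{-1}]$ is independent of these because any combination of $[j|j]$ and $[j|{-j}]$ has first coordinate $0$, and $[j^2|{-j^2}]$ is independent of the first three by the image-size argument of Corollary~\ref{corollary:Hadamardrank} applied to the first half — which reconfirms $d = 4$ directly.
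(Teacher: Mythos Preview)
Your proof is correct and follows essentially the same route as the paper: verify that $n=-1$ is a legitimate nonsquare precisely when $p\equiv 3\bmod 4$, observe that then $[n|1]=-[1|n]$ so the five spanning vectors from Corollary~\ref{corollary:Hadamardrank} collapse to four, and combine with the lower bound $d\ge 4$ from that same corollary. The paper's own proof is just a terser version of exactly this argument.
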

\begin{proof}
Note $-1$ is a valid choice for $n$ if and only if $-1$ is not a square in $\mathbb{Z}_p$ if and only if $p \equiv 3$ mod $4$.
In this case $[1|n]$ and $[n|1]$ are scalar multiples of each other so the result follows by Corollary~\ref{corollary:Hadamardrank}.
\end{proof}

To finish the check that $\mathbb{L}$ is log-Hadamard in light of lemmas~\ref{lemma:tophalf} and \ref{lemma:bottomhalf} it remains to show that the 
difference of a ``top row" of $\mathbb{L}$ with a ``bottom row'' of $\mathbb{L}$ is a balanced vector. This will require some preliminary considerations 
which we do next.

We first record a useful lemma about the image set of a quadratic map $Q: \mathbb{Z}_p \to \mathbb{Z}_p$:

\begin{lemma}
\label{lemma:quadimage}
Fix $p$ and odd prime. Let $Q(x)=ax^2+bx+c: \mathbb{Z}_p \to \mathbb{Z}_p$ be a quadratic map with $a \neq 0$.
Then for all $\mu \in \mathbb{Z}_p$ we have: \\
$$
|Q^{-1}(\mu)|=\begin{cases} 2 \text{ if } b^2-4a(c-\mu) \text{ is a nonzero square } \\
1 \text{ if } b^2-4a(c-\mu)=0 \\
0 \text{ if } b^2 -4a(c-\mu) \text{ is a nonsquare }
\end{cases}
$$
\end{lemma}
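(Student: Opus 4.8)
The plan is to reduce the count $|Q^{-1}(\mu)|$ to the familiar fact that a nonzero element of $\mathbb{Z}_p$ has two square roots if it is a nonzero square and none if it is a nonsquare, while $0$ has exactly one square root. First I would complete the square: since $a \neq 0$ and $p$ is odd (so $2$ and $4$ are invertible), write
\[
Q(x) = a\left(x + \frac{b}{2a}\right)^2 + c - \frac{b^2}{4a}.
\]
Setting $y = x + \frac{b}{2a}$, the map $x \mapsto y$ is a bijection of $\mathbb{Z}_p$, so $|Q^{-1}(\mu)|$ equals the number of $y \in \mathbb{Z}_p$ with $a y^2 = \mu - c + \frac{b^2}{4a}$, i.e. with $y^2 = \frac{1}{a}\left(\mu - c + \frac{b^2}{4a}\right)$. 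Multiplying numerator and denominator appropriately, the right-hand side is $\frac{b^2 - 4a(c-\mu)}{4a^2}$; call this $\Delta/(4a^2)$ where $\Delta = b^2 - 4a(c-\mu)$.

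Next I would count solutions of $y^2 = \Delta/(4a^2)$. Since $4a^2 = (2a)^2$ is a nonzero square, the substitution $z = 2a y$ is again a bijection of $\mathbb{Z}_p$ and turns the equation into $z^2 = \Delta$. Now the number of solutions $z$ of $z^2 = \Delta$ is $1 + \chi_2(\Delta)$ when $\Delta \neq 0$, where $\chi_2$ is the Legendre symbol, and it is $1$ when $\Delta = 0$; spelled out, this is exactly $2$ if $\Delta$ is a nonzero square, $1$ if $\Delta = 0$, and $0$ if $\Delta$ is a nonsquare. Finally, note that $\Delta$ being a nonzero square (resp.\ zero, resp.\ a nonsquare) is unaffected by dividing by the nonzero square $4a^2$, so the count for $y^2 = \Delta/(4a^2)$ is the same as for $z^2 = \Delta$; tracing back through the two bijections gives the stated formula for $|Q^{-1}(\mu)|$.

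There is essentially no hard step here — the only things to be careful about are that $2$ and $4$ are invertible mod $p$ (true since $p$ is odd), and that multiplying or dividing the target value by a nonzero square preserves its quadratic character, so each reduction ($x \to y$, $y \to z$) genuinely preserves the solution count. If I wanted to avoid invoking the Legendre symbol explicitly, I could instead argue directly: $z^2 = \Delta$ has a solution iff $\Delta$ is a square, and if $z_0$ is one solution then $-z_0$ is the only other (they coincide iff $z_0 = 0$ iff $\Delta = 0$, using $p$ odd), which gives the three cases.
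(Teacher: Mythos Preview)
Your proof is correct and is essentially the same approach as the paper's: the paper simply says ``Follows immediately from the quadratic formula,'' and what you have written is exactly the quadratic formula argument spelled out in detail (complete the square, reduce to counting square roots of the discriminant). There is nothing to add.
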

\begin{proof}
Follows immediately from the quadratic formula.
\end{proof}

\begin{definition}
Fix $p$ an odd prime. A pair of quadratic functions $Q_1, Q_2 \in \mathbb{Z}_p[x]$ is a balanced pair if 
for every $\mu \in \mathbb{Z}_p$, $|Q_1^{-1}(\mu)| + |Q_2^{-1}(\mu)|=2$. In this case the $2p$-dimensional vector 
$( Q_1(j) | Q_2(j) )$ is a balanced vector.
\end{definition}

By Lemma~\ref{lemma:quadimage} it follows that if $Q_1(x)=ax^2+bx+c, Q_2(x)=\tilde{a}x^2 + \tilde{b}x + \tilde{c}$ are two quadratic maps 
in $\mathbb{Z}_p[x]$ with $a\tilde{a} \neq 0$ and $b^2-4ac= n[\tilde{b}^2 - 4\tilde{a}\tilde{c}], a=n\tilde{a}$ for some nonsquare $n$ then the pair 
$(Q_1, Q_2)$ is a balanced pair of quadratic functions.

We are now ready to complete the verification that $\mathbb{L}$ is log-Hadamard.

If we take the difference between the $i$th ``top row vector and $i'$ ``bottom row'' vector of $\mathbb{L}$ (note $i=i'$ is possible) we get the row vector
$$
[2ij-i^2-(j-ni')^2 | n(2ij-i^2)-n(i'-j)^2] 
$$
which simplifies to
$$
[-j^2 +(2ni'+2i)j + (-i^2-n^2(i')^2) | -nj^2 + n(2i'+2i)j + (-ni^2-n(i')^2)]=[Q_1(j)|Q_2(j)]
$$
where $Q_1(j)=aj^2+bj+c, Q_2(j)=\tilde{a}j^2 + \tilde{b}j + \tilde{c}$ with $a=-1, \tilde{a}=-n, b=(2ni'+2i), \tilde{b}=n(2i'+2i), c=(-i^2-n^2(i')^2), \tilde{c}=-n(i^2+(i')^2)$.
It is then easy to verify that $b^2-4ac=n(\tilde{b}^2-4\tilde{a}\tilde{c}), a=n\tilde{a}$ and so it follows that $(Q_1, Q_2)$ is a balanced pair of quadratics 
and so the difference vector is a balanced vector.

We have  thus finished verifying that $\mathbb{L}$ is log-Hadamard and thus we have proven Theorem~\ref{theorem:main2}: 

\begin{theorem}
For any odd prime $p$, there exists a spectral set of order $2p$ in $\mathbb{Z}_p^5$ which does not tile. 
For primes $p$ with $p \equiv 3 \text{ mod } 4$, there exist spectral sets of order $2p$ in $\mathbb{Z}_p^4$ which do not tile.
\end{theorem}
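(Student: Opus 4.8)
The plan is to collect the ingredients assembled in this section and run them through the log-Hadamard dictionary. First I would confirm that the $2p\times 2p$ matrix $\mathbb{L}$ defined above is log-Hadamard. By Theorem~\ref{theorem:basicpicturespectral}(g) this amounts to checking that the difference of any two distinct rows of $\mathbb{L}$ is a balanced vector, and there are exactly three cases: a difference of two ``top rows'', a difference of two ``bottom rows'', and a difference of a ``top row'' with a ``bottom row''. The first two cases are precisely Lemmas~\ref{lemma:tophalf} and~\ref{lemma:bottomhalf}. For the mixed case I would expand the difference $[2ij-i^2-(j-ni')^2 \mid n(2ij-i^2)-n(i'-j)^2]$, read off that it has the form $[Q_1(j) \mid Q_2(j)]$ with $Q_1,Q_2$ quadratics whose leading coefficients and discriminants satisfy $a=n\tilde a$ and $b^2-4ac=n(\tilde b^2-4\tilde a\tilde c)$, and then invoke Lemma~\ref{lemma:quadimage} to conclude that $|Q_1^{-1}(\mu)|+|Q_2^{-1}(\mu)|=2$ for every $\mu\in\mathbb{Z}_p$, i.e.\ that $(Q_1,Q_2)$ is a balanced pair of quadratics, so the difference vector is balanced. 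Together these three cases show $\mathbb{L}$ is log-Hadamard.

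With $\mathbb{L}$ known to be log-Hadamard, the next step is the rank estimate. Corollary~\ref{corollary:Hadamardrank} shows that the rank $d$ of $\mathbb{L}$ satisfies $4\le d\le 5$; in particular $\mathbb{L}$ has rank at most $5$ for every odd prime $p$, and when $p\equiv 3\bmod 4$ the choice $n=-1$ (legitimate precisely in this congruence class, since $-1$ is then a nonsquare modulo $p$) collapses the rank to exactly $4$ by Corollary~\ref{corollary:Hadamardrank2}.

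I would then feed this into Theorem~\ref{theorem:logHadamardSpectralEquivalence}: the existence of a $2p\times 2p$ log-Hadamard matrix over $\mathbb{Z}_p$ of rank $\le d$ is equivalent to the existence of a spectral pair $(E,B)$ in $\mathbb{Z}_p^d$ with $|E|=|B|=2p$. Taking $d=5$ for an arbitrary odd prime $p$, and $d=4$ when $p\equiv 3\bmod 4$, produces the advertised spectral sets of size $2p$. That such an $E$ cannot tile is immediate from Theorem~\ref{theorem:main1}(a): a tiling set in $\mathbb{Z}_p^d$ has cardinality $p^r$ for some $0\le r\le d$, whereas for an odd prime $p$ the number $2p$ is never a power of $p$ (since $2p\neq p^0,p^1$ and $2p<p^2\le p^r$ for all $r\ge 2$). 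Hence $E$ is spectral but does not tile, which is exactly the assertion of the theorem.

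The one step that needs genuine computation rather than a citation is the mixed ``top row'' minus ``bottom row'' case: one must carry out the algebraic simplification of the difference vector, identify the coefficients of $Q_1$ and $Q_2$ explicitly, and verify the two discriminant identities $a=n\tilde a$ and $b^2-4ac=n(\tilde b^2-4\tilde a\tilde c)$. This is elementary but is the place where a stray sign or factor would do the most harm; everything else in the argument is assembled from results already in hand.
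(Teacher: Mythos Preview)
Your proposal is correct and follows essentially the same approach as the paper: the paper's proof is the one-line citation ``Follows by Corollaries~\ref{corollary:Hadamardrank} and~\ref{corollary:Hadamardrank2} and Theorem~\ref{theorem:logHadamardSpectralEquivalence}'', with the log-Hadamard verification (including the mixed top-bottom case via the balanced-quadratic argument) carried out in the text immediately preceding the theorem. Your expansion of these steps, together with the observation that $2p$ is not a power of $p$ so the resulting spectral set cannot tile, matches the paper's reasoning exactly.
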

\begin{proof}
Follows by Corollaries~\ref{corollary:Hadamardrank} and ~\ref{corollary:Hadamardrank2} and Theorem~\ref{theorem:logHadamardSpectralEquivalence}.
\end{proof}

As an illustration we provide the sets $E$ and $B$ for the case when $p \equiv 3 \text{ mod } 4$ and $n=-1$. Both are subsets of $\bb{Z}_p^4$. Label the elements of $E$ by $E=\{e_0,\dots,e_{2p-1}\}$ and the elements of $B$ by $B=\{b_0,\dots,b_{2p-1}\}$. Now letting $i$ and $j$ range from $0$ to $p-1$ we have
\[
e_i = (i^2,0,2i,0) , \, e_{i+p} = (-i^2,2i,0,1) \text{ and } b_{j} = (-1,j,-j,j^2), \, b_{j+p} = (1,j,-j,-j^2),
\]
with all the entries being elements of $\bb{Z}_p$.

\section{3-dimensional Fuglede conjecture over prime fields}

One of the important remaining questions is the status of the Fuglede conjecture in 3-dimensions over prime fields. It has the following equivalent formulations:

\begin{proposition}[Equivalent formulations of 3-dimensional Fuglede conjecture]
\label{theorem:3DFuglede}
Let $p$ be a prime. The following are equivalent: 

\vskip.125in 

(a) A subset of $\mathbb{Z}_p^3$ tiles if and only if it is spectral. 

(b) There do not exist spectral sets $E \subseteq \mathbb{Z}_p^3$ of size $mp$, $1 < m < p$. 

(c) There does not exist a $mp \times mp$ log-Hadamard matrix with entries in $\mathbb{Z}_p$ of 

rank $\leq 3$ where $1 < m < p$. 

(d) There does not exist a $mp \times mp$ log-Hadamard matrix with entries in $\mathbb{Z}_p$ of rank $3$ 

where $1 < m < p$. 

(e) There does not exist a $mp \times mp$ special dephased log-Hadamard matrix with entries 

in $\mathbb{Z}_p$ of rank $3$ where $1 < m < p$. 
\end{proposition}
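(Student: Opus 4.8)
The plan is to establish the chain of equivalences (a) $\Leftrightarrow$ (b) $\Leftrightarrow$ (c) $\Leftrightarrow$ (d) $\Leftrightarrow$ (e), using throughout three facts already in hand: the size restrictions on spectral sets (Corollary~\ref{corr:spectralsize}, Corollary~\ref{corollary:spectralgaphighend}), the dictionary between spectral pairs in $\mathbb{Z}_p^d$ and $m\times m$ log-Hadamard matrices over $\mathbb{Z}_p$ of rank $\le d$ (Theorem~\ref{theorem:logHadamardSpectralEquivalence}, Corollary~\ref{corollary:dephasedrank}), and the low-dimensional resolution of Fuglede (Theorem~\ref{theorem:main1}(e),(f),(g)). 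For (a) $\Leftrightarrow$ (b): Theorem~\ref{theorem:main1}(g) already gives ``tiles $\Rightarrow$ spectral'' in $\mathbb{Z}_p^3$, so (a) is exactly ``spectral $\Rightarrow$ tiles'' there. By Corollaries~\ref{corr:spectralsize} and~\ref{corollary:spectralgaphighend}, a spectral set in $\mathbb{Z}_p^3$ has size $1$, $p$, $p^2$, $p^3$, or $mp$ with $1<m<p$; singletons and the whole space tile trivially, the sizes $p$ and $p^2=p^{d-1}$ tile by Theorem~\ref{theorem:main1}(e), while a set of size $mp$ with $1<m<p$ has $p<mp<p^2$ and so cannot tile (Theorem~\ref{theorem:main1}(a)). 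Thus ``spectral $\Rightarrow$ tiles'' in $\mathbb{Z}_p^3$ is equivalent to the non-existence of spectral sets of size $mp$, $1<m<p$, which is (b).

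Next, (b) $\Leftrightarrow$ (c) is immediate from Theorem~\ref{theorem:logHadamardSpectralEquivalence} with $d=3$, quantified over $m$. The implications (c) $\Rightarrow$ (d) and (d) $\Rightarrow$ (e) are trivial, since rank exactly $3$ and the special dephased form are each special cases. For (d) $\Rightarrow$ (c): suppose an $mp\times mp$ log-Hadamard matrix of rank $r\le 3$ exists with $1<m<p$. If $r=3$ this contradicts (d); if $r\le 2$ then Theorem~\ref{theorem:logHadamardSpectralEquivalence} (with $d=r$) produces a spectral set of size $mp$ inside $\mathbb{Z}_p^r\subseteq\mathbb{Z}_p^2$, and Theorem~\ref{theorem:main1}(f) forces its size to be a power of $p$ --- impossible since $p<mp<p^2$. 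So $r=3$ and (d) fails, giving (d) $\Rightarrow$ (c).

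For (e) $\Rightarrow$ (d): from an $mp\times mp$ log-Hadamard matrix of rank $3$, I would pass via Theorem~\ref{theorem:logHadamardSpectralEquivalence} to a spectral pair $(E,B)$ in $\mathbb{Z}_p^3$ of size $mp$ and translate $E$ and $B$ separately so that $\vec{0}\in E\cap B$. Listing $\vec 0$ first in each set, the dot-product matrix becomes dephased; crucially it equals $\big((e_i-e_0)\cdot(b_j-b_0)\big)_{i,j}$, hence is still the dot-product matrix of a spectral pair in $\mathbb{Z}_p^3$, so its rank stays $\le 3$. Then I reorder $E$ and $B$ to put the $1$st row and column into the prescribed balanced-vector form: pick nonzero $e_1\in E$ and $b_1\in B$ with $e_1\cdot b_1=1$ (possible because the corresponding row is a balanced vector and so assumes every value of $\mathbb{Z}_p$), place them at position $1$, and sort the remaining elements of $B$ (resp.\ $E$) by the value of $e_1\cdot(-)$ (resp.\ $(-)\cdot b_1$); the multiplicities of the values in a balanced vector make this fit the pattern $(0,1,\dots,p-1,0,1,\dots,p-1,\dots)$ exactly. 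Permutations do not change rank, so we obtain a special dephased log-Hadamard matrix of rank $\le 3$, which by the argument of the previous paragraph must have rank exactly $3$. Hence (e) fails, closing the chain.

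The main obstacle is the rank bookkeeping in the last two steps. A priori the equivalence moves on log-Hadamard matrices --- adding arbitrary multiples of $\mathfrak{1}$ to rows and to columns --- can shift the rank by as much as $2$, so one cannot simply invoke ``every log-Hadamard matrix is equivalent to a special dephased one'' and expect the rank to be unchanged. The resolution is to realize the particular moves actually needed (the dephasing and the rearrangement of the $1$st row and column) as a translation followed by a permutation of a spectral pair, both of which transparently preserve the bound ``rank $\le 3$,'' and then to promote ``rank $\le 3$'' to ``rank exactly $3$'' by the dimension-reduction argument: rank $\le 2$ would yield a spectral set of size $mp$, which is not a power of $p$, inside $\mathbb{Z}_p^2$, contradicting Theorem~\ref{theorem:main1}(f).
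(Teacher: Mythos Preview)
Your proof is correct and follows essentially the same route as the paper's: (a)$\Leftrightarrow$(b) via Theorem~\ref{theorem:main1}, (b)$\Leftrightarrow$(c) via Theorem~\ref{theorem:logHadamardSpectralEquivalence}, (c)$\Leftrightarrow$(d) via the two-dimensional Fuglede result, and (d)$\Leftrightarrow$(e) via Corollary~\ref{corollary:dephasedrank}. Your more explicit handling of (e)$\Rightarrow$(d)---realizing the dephasing and reordering as translation and permutation of a spectral pair in $\mathbb{Z}_p^3$ so that rank $\le 3$ is transparently preserved, and then reusing the rank-$\le 2$-is-impossible argument to recover rank exactly $3$---is a welcome clarification of what the paper's one-line citation leaves implicit, but the underlying argument is the same.
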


\vskip.125in 

\begin{proof}
By Theorem~\ref{theorem:main1}, a tiling set in $\mathbb{Z}_p^3$ is always spectral. Furthermore spectral sets of size $1, p, p^2,p^3$ in $\mathbb{Z}_p^3$, 
always tile. Thus by the same theorem the only spectral sets that do not tile would have sizes $mp, 1 < m < p$ and any such set definitely does not tile by order considerations. Thus $(a)$ is equivalent to $(b)$. $(b)$ is equivalent to $(c)$ by Theorem~\ref{theorem:logHadamardSpectralEquivalence}. 
$(c)$ is equivalent to $(d)$ as there do not exist spectral sets of size $mp, 1 < m < p$ in $\mathbb{Z}_p^2$ by Theorem~\ref{theorem:main1}. The equivalence 
of $(d)$ and $(e)$ is given by Corollary~\ref{corollary:dephasedrank}.
\end{proof}

We now provide a human-readable proof of the 3-dimensional Fuglede conjecture when $p=2,3$.

\begin{theorem}
\label{theorem:3Dverifications}
The Fuglede conjecture holds in $\mathbb{Z}_2^3$ and $\mathbb{Z}_3^3$.
\end{theorem}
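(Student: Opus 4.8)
The plan is to reduce the theorem to the non-existence result recorded in Proposition~\ref{theorem:3DFuglede} and then treat the two primes separately, the case $p=2$ being immediate and the case $p=3$ requiring a finite but honest case analysis driven by the triplet rule of Corollary~\ref{corollary:Davey3}. First I would note that by Theorem~\ref{theorem:main1}(g) the implication ``$E$ tiles $\Rightarrow E$ spectral'' already holds in every three-dimensional prime field, so the only thing to prove is ``$E$ spectral $\Rightarrow E$ tiles''. By Proposition~\ref{theorem:3DFuglede} (the equivalence of (a) with (d)/(e)) this is in turn equivalent to the statement that there is no $mp\times mp$ special dephased log-Hadamard matrix over $\mathbb{Z}_p$ of rank $3$ with $1<m<p$.

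For $p=2$ there is no integer $m$ with $1<m<2$, so the required statement is vacuously true. Equivalently, Theorem~\ref{theorem:main1}(b)--(c) force every spectral set in $\mathbb{Z}_2^3$ to have size $1,2,4$ or $8$, i.e. size $1$, $p$, $p^2$ or $p^3$, and each such set tiles by Theorem~\ref{theorem:main1}(e) (or trivially for the first and last). Thus the real content lies in $\mathbb{Z}_3^3$.

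For $p=3$ the only unsettled size is $6=2p$, so it suffices to show that every $6\times 6$ log-Hadamard matrix $\mathbb{L}$ over $\mathbb{Z}_3$ has rank at least $4$; we may assume $\mathbb{L}$ is in special dephased form, so its rows are $r_0=\vec 0$, $r_1=(0,1,2,0,1,2)$, and four further balanced vectors $r_2,r_3,r_4,r_5$ with every pairwise difference balanced, the columns satisfying the same constraints by transposition. Applying Corollary~\ref{corollary:Davey3} to each pair $(r_i,r_1)$ with $i\ge 2$: the $3\times 3$ matrix counting how often $r_i$ takes the value $a$ over the two positions where $r_1$ equals $b$ is a Davey matrix of weight $2$, hence by Theorem~\ref{theorem:Davey3} equals $s_1\sigma_1+s_2\sigma_2+s_3\sigma_3$ with $s_1+s_2+s_3=2$; this pins down each $r_i$ up to reordering coordinates within the three fibres of $r_1$, leaving a short explicit list of candidates. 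I would then cut this list down using the symmetries that preserve the normalisation and the rank (permutations of coordinates inside the fibres of $r_1$, together with combining a nonzero scaling of $\mathbb{L}$ with a relabelling of fibres), impose the triplet rule once more on the pairs $(r_i,r_j)$ with $i,j\ge 2$ and on the analogous column pairs, and in each of the handful of surviving configurations exhibit four linearly independent rows. This contradicts rank $3$, and the theorem follows via Proposition~\ref{theorem:3DFuglede} together with Theorem~\ref{theorem:main1}(g).

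The main obstacle is organisational rather than conceptual: a careless enumeration produces many candidate matrices, and the work lies in invoking the equivalence relation on log-Hadamard matrices and the triplet rule repeatedly, on rows and columns alike, to collapse everything to a small list, each member of which is then killed by an on-the-spot rank computation. One should also keep in mind a small subtlety: only rank exactly $3$ need be excluded --- rank $4$ (and $5$) genuinely occur, by Theorem~\ref{theorem:main2} with $p=3$ --- while rank $\le 2$ is already impossible, since it would yield a spectral set of size $6$ in $\mathbb{Z}_3^2$, contradicting Theorem~\ref{theorem:main1}.
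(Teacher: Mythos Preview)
Your proposal is correct and follows essentially the same route as the paper: reduce via Proposition~\ref{theorem:3DFuglede} to excluding rank-$3$ special dephased $6\times 6$ log-Hadamard matrices over $\mathbb{Z}_3$, and then use the triplet rule (Corollary~\ref{corollary:Davey3}) together with row/column permutations to pin down the possibilities. The paper in fact carries this out completely and shows something a bit sharper than you state: up to row and column permutations there is a \emph{unique} special dephased $6\times 6$ log-Hadamard matrix over $\mathbb{Z}_3$, and it has rank~$4$; so your ``handful of surviving configurations'' is in fact a single matrix.
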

\begin{proof}
When $p=2$, condition (b) of Proposition~\ref{theorem:3DFuglede} holds and so we are done in this case.

Now consider $p=3$. By Proposition~\ref{theorem:3DFuglede} it is enough to rule out the existence of a special dephased $6 \times 6$ log-Hadamard matrix with $\mathbb{Z}_3$ entries of rank 3.
We will use our previous computation of the Davey monoid of $3 \times 3$ Davey matrices and the corresponding triplet rule to show that 
up to permutation of rows and columns there is a {\bf unique} special dephased $6 \times 6$ log-Hadamard matrix with $\mathbb{Z}_3$ entries.
This matrix will have rank 4 and so it will follow that no rank 3, special dephased $6 \times 6$ log-Hadamard matrix with $\mathbb{Z}_3$ entries 
exists hence proving the Fuglede conjecture in $\mathbb{Z}_3^3$.

Let $\mathbb{A}$ be a special dephased log-Hadamard $6 \times 6$ matrix with $\mathbb{Z}_3$ entries. Thus 
$$
\mathbb{A}=\begin{bmatrix} 0 & 0 & 0 & 0 & 0 & 0 \\ 0 & 1 & 2 & 0 & 1 & 2 \\ 0 & 2 & * & * & * & * \\ 0 & 0 & * & * & * & * \\
0 & 1 & * & * & * & * \\ 0 & 2 & * & * & * & * \end{bmatrix}
$$
We will refer to the rows as 0th through 5th row (0th row on top) and columns 0th through 5th (0th on the left) throughout.

Now the 1st row and the 3rd row are two balanced 6-dimensional vectors whose difference is balanced so the triplet rule says that as a 0 occurs in the 3rd row below a 1 in the 1st row, we must also have a 1 occur in the 3rd row below a 0 in the 1st row and a 2 occur in the 3rd row below a 2 in 1st row. 
Thus $A_{3,3}=1$ and one of $A_{3,2}, A_{3,5}$ equals two. By permuting the 2nd and 5th columns if necessary we can assume $A_{3,2}=2$. 
(We will not be permuting columns anymore after this.)

Thus the 3rd row looks like $(0,0,2,1,*,*)$. As there is a $0$ in the 3rd row below a $0$ in the 1st row, the triplet rule forces that there must be a $2$ 
in the 3rd row below a $1$ in the 1st row and a $1$ in the 3rd row below a $2$ in the 1st row. This forces the 3rd row to be 
$(0,0,2,1,2,1)$. Thus 
$$
\mathbb{A}=\begin{bmatrix} 0 & 0 & 0 & 0 & 0 & 0 \\ 0 & 1 & 2 & 0 & 1 & 2 \\ 0 & 2 & * & * & * & * \\ 0 & 0 & 2 & 1 & 2 & 1 \\
0 & 1 & * & * & * & * \\ 0 & 2 & * & * & * & * \end{bmatrix}
$$

Now by considering the $1$st and $2$nd column which are balanced, with balanced difference, we see that there is exactly one occurrence of a 
zero to the left of a zero. Thus the triplet rule guarantees the occurrence of exactly one 1 to the right of a 2. By permuting the 2nd and 5th rows if necessary 
(We will not be permuting rows anymore after this.) we can assume $A_{2,2}=1$. Then the occurrence of a 2 in the 2nd column to the right of a 0 in the 1st column forces the occurrence of a 0 to the right of a 2 and a 1 to the right of a 1. This forces the form of the 2nd column to be as below: 

$$
\mathbb{A}=\begin{bmatrix} 0 & 0 & 0 & 0 & 0 & 0 \\ 0 & 1 & 2 & 0 & 1 & 2 \\ 0 & 2 & 1 & * & * & * \\ 0 & 0 & 2 & 1 & 2 & 1 \\
0 & 1 & 1 & * & * & * \\ 0 & 2 & 0 & * & * & * \end{bmatrix}
$$

Now when comparing the 4th and 5th rows with the 1st row, three pairs of entries are known and the triplet rule forces the remaining unknown entries (no permutations or ambiguity) as the reader can verify resulting in the matrix:

$$
\mathbb{A}=\begin{bmatrix} 0 & 0 & 0 & 0 & 0 & 0 \\ 0 & 1 & 2 & 0 & 1 & 2 \\ 0 & 2 & 1 & * & * & * \\ 0 & 0 & 2 & 1 & 2 & 1 \\
0 & 1 & 1 & 2 & 2 & 0 \\ 0 & 2 & 0 & 2 & 1 & 1 \end{bmatrix}
$$

The remaining entries of $\mathbb{A}$ are then forced as the last 3 columns must be balanced and we get: 
$$
\mathbb{A}=\begin{bmatrix} 0 & 0 & 0 & 0 & 0 & 0 \\ 0 & 1 & 2 & 0 & 1 & 2 \\ 0 & 2 & 1 & 1 & 0 & 2 \\ 0 & 0 & 2 & 1 & 2 & 1 \\
0 & 1 & 1 & 2 & 2 & 0 \\ 0 & 2 & 0 & 2 & 1 & 1 \end{bmatrix}
$$

A quick Gauss-Jordan elimination reveals this matrix to have rank 4 and thus any special dephased $6 \times 6$ log-hadamard matrix with entries in 
$\mathbb{Z}_3$ has rank 4 (not 3) and we are done.

\end{proof}

Though counterexamples to the Fuglede conjecture exist in 3-dimensions over some non prime cyclic rings, these are of the form 
tiling sets which are not spectral. As we know tiling sets are always spectral in $\mathbb{Z}_p^3$ when $p$ is prime and so these examples cannot help decide 
whether the Fuglede conjecture is true or not in 3-dimensons over prime fields. We have seen that generally Fuglede is true in 2 dimensions and false in 4 dimensions over prime cyclic rings so 3-dimensions remains the last remaining significant case of this conjecture over prime rings.

Unfortunately the Davey monoid methods and computer methods seem to become computationally infeasible as the prime $p$ grows. 
In $\mathbb{Z}_5^3$ for example one has to rule out the existence of spectral sets of size $10, 15, 20$ in order to prove the conjecture. 
The number of subsets of these sizes are $\binom{125}{10}+\binom{125}{15} + \binom{125}{20}$ which is large and furthermore many spectral partners 
must be considered even after some simplications. The set of $5 \times 5$ Davey matrices seems also hard to determine as $5!=120$ permutations must be considered in order to understand it.

Another question left open is the question of whether tiling sets are always spectral over prime cyclic rings in any dimension. Surprisingly this appears to remain a possibility though many counterexamples exist over non prime cyclic rings.

\enddocument